\documentclass[reqno, 11pt,epsfig,amsfonts]{amsart}
\hoffset -0.8cm \voffset -0.8cm \textheight 228mm \textwidth 148mm

\usepackage{amssymb, amsmath, amsthm}

\usepackage[latin1]{inputenc} 
\usepackage{listings, tcolorbox}

\usepackage{tocvsec2}
\usepackage{color}
\usepackage{epsfig}
\usepackage{amsmath}
\usepackage{amssymb}
\usepackage{amscd}
\usepackage{graphicx}
\usepackage{hyperref}
\usepackage{cleveref}
\usepackage{subfigure}
\usepackage{mathrsfs}
\usepackage[T1]{fontenc}
\usepackage{tikz}
\usepackage{floatrow}
\usepackage{float}
\usepackage{pifont,bm}
\usepackage{indentfirst}
\usepackage{enumerate}
\usepackage{amsmath}

\numberwithin{equation}{section}
\newtheorem{lemma}{Lemma}[section]
\newtheorem{theorem}{Theorem}
\newtheorem{re}{Remark}[section]
\newtheorem{prop}{Proposition}[section]

\newtheorem{cor}{Corollary}[section]

   \newtheorem{coro}{Corollary}[section]

\usepackage{epsfig}

\numberwithin{equation}{section}
\numberwithin{theorem}{section}
\numberwithin{prop}{section}
\numberwithin{lemma}{section}
\numberwithin{re}{section}
\numberwithin{coro}{section}
\theoremstyle{definition}

\subjclass[2020]{37K10, 37K40, 35P30, 35Q53}

\keywords{Integrable system, Intermediate long wave equation, Stability, $n$-soliton solutions, Recursion operator}

\thanks{*Corresponding author (sftian@cumt.edu.cn, shoufu2006@126.com).}
\thanks{$^\dagger$ Contributed equally as the first author.}

\begin{document}

\title[Stability of $n$-solitons for the ILW equation]{Stability of $n$-soliton solutions for the Intermediate Long Wave equation}


\author[Lu]{Zhen Lu}

\author[Tian]{Shou-Fu Tian$^{*,\dagger}$}
\address{Zhen Lu, Shou-Fu Tian ( Corresponding author)\newline
School of Mathematics, China University of Mining and Technology, Xuzhou 221116, People's Republic of China}
\email{sftian@cumt.edu.cn, shoufu2006@126.com}

\begin{abstract}
In this work, we focus on the stability of $n$-soliton solutions ($n\in \mathbb{N}, n\geq 1$) to the completely integrable intermediate long wave equation (ILW), which models long internal gravity waves in a stratified fluid of finite depth. We show that the $n$-soliton solutions of the ILW equation form non-isolated constrained minimizers of a variational problem associated with a non-local elliptic equation. To establish this result, we construct a suitable Lyapunov functional and utilize the inverse scattering transform to relate the infinite sequence of conservation laws to the scattering data. Furthermore, we employ the recursion operator derived from the bi-Hamiltonian structure to optimize our analysis. Our analysis demonstrates that the $n$-soliton solutions of the ILW equation are dynamically stable in the space $H^{\frac{n}{2}}(\mathbb{R})$ ($n\in \mathbb{N}, n\geq 1$). Additionally, we establish the orbital stability of double soliton solutions in $H^1(\mathbb{R})$.
\end{abstract}

\maketitle

\section{Introduction}\label{sec.1}
In this work, we consider the stability of $n$-soliton solutions for the intermediate long wave (ILW) equation\cite{J,KAS,KKD}
\begin{align}
	u_t+\frac{1}{\delta}u_x+2uu_x+T^\delta u_{xx}=0,\quad  (t,x)\in \mathbb{R} \times \mathbb{R},\label{ILW}
\end{align}
where $u=u(t,x)\in\mathbb{R}$ is a real-valued function, $T^\delta $ is the inverse of Tilbert  transform $T$ with a large scale parameter $\delta>0$. These two operators are defined as
\begin{align}
	(T^\delta f)(x)&=\frac{1}{2\delta}\text{P.V.}\int_{\mathbb{R}}\coth \frac{\pi (y-x)}{2\delta}f(y)dy,\\\nonumber
		(Tf)(x)&=\frac{1}{2\delta}\text{P.V.}\int_{\mathbb{R}}\mathrm{cosech} \frac{\pi (y-x)}{2\delta}f(y)dy,
\end{align}
and P.V. indicates that the integral is to be computed in the principle value sense. Moreover, $T^\delta$ is a zero Fourier multiplier, in the sense that $\partial_x T^\delta$ is the multiplier with symbol
\begin{align}
	\sigma(\partial_x T^\delta)=\widehat{\partial_x T^\delta}=-2\pi\xi\coth(2\pi\delta \xi).
\end{align}

The ILW equation (\ref{ILW}) describes weakly nonlinear internal wave propagation in stratified fluids of finite depth $\delta$. First derived by Kubota, Ko and Dobbs \cite{KKD}, this model captures essential nonlinear and dispersive characteristics of wave dynamics. Joseph \cite{J} subsequently formalized the derivation of the ILW equation by integrating the linear dispersion relation from \cite{P} within Whitham's nonlocal framework \cite{W}, and explicitly constructed its solitary wave solution. This represents a key advancement in establishing the integrability of the equation. Furthermore, the ILW equation itself arises in the context of the two-layer internal wave system \cite{BLS, CGK}. A rigorous derivation of its system form in one or two spatial dimensions, including the case with a free upper surface, can be found in \cite{Xu}.

In addition, the ILW equation (\ref{ILW}) establishes a fundamental connection between two canonical models in water wave theory: the Korteweg-de Vries (KdV) equation for shallow water and the Benjamin-Ono (BO) equation for deep water \cite{ABFS, AFS,HKO,SAF}, with shared initial data in appropriate Sobolev spaces. Specifically, the asymptotic behavior of the ILW solution is governed by the depth parameter $\delta$. Let $u(t,x)$ be the solution of the ILW equation (\ref{ILW}). Then, under the scaling transformation
\begin{align}
	v = \frac{3}{\delta} u\left( \frac{3}{\delta}t,x\right), \label{T1}
\end{align}
the rescaled function $v$ converges (in a suitable functional sense, typically distributional or weak convergence) to the solution of the KdV equation
\begin{align}
	u_t + 2uu_x + \frac{\delta}{3}u_{xxx} = 0, \label{KdV}
\end{align}
in the shallow-water limit ($\delta \to 0$) and in the deep-water limit ($\delta \to \infty$), (\ref{T1}) will converge to the the solution of the BO equation
\begin{align}
	u_t + 2uu_x + Hu_{xx} = 0, \label{BO}
\end{align}
where $H$ denotes the Hilbert transform, defined by the principal value integral
\begin{align*}
	(Hf)(x)=\frac{1}{\pi}\text{P.V.}\int_{\mathbb{R}}\frac{f(y)}{y-x}\mathrm{d}y.
\end{align*}

As a fundamental integrable system \cite{AC,FA,KS}, the ILW equation (\ref{ILW}) possesses an infinite-dimensional completely integrable Hamiltonian structure \cite{KAS, Sa}. This integrability is manifested in the existence of an infinite hierarchy of conservation laws and a Lax-pair formulation.
A key distinction between the ILW equation and the KdV equation lies in their dispersion operators \cite{MPS,Sa,Xu}. Specifically, the ILW equation incorporates a singular nonlocal integro-differential operator $T^\delta$, which fundamentally alters the asymptotic properties of its solutions. In particular, the soliton solutions of the ILW equation exhibit algebraic decay, in contrast to the exponential decay characteristic of KdV solitons. This difference in decay rates represents a key qualitative difference between the two models. Formally, the following quantities are conserved along the flow of the ILW equation \cite{LR}
\begin{align}
	H_0(u)&=:\int_{\mathbb{R}}u\text{d}x, \\
    H_1(u)&=:\frac{1}{2}\int_{\mathbb{R}}u^2\text{d}x, \\
	H_2(u)&=:-\int_{\mathbb{R}}\frac{1}{3}u^3+\frac{1}{2}uT^\delta u_x +\frac{u^2}{2\delta}\text{d}x,\\
	H_3(u)&=:\int_{\mathbb{R}}\frac{1}{4}u^4+\frac{3}{4}u^2T^\delta u_x+\frac{3}{8}(T^\delta u_x)^2+\frac{1}{3\delta}u^3+\frac{1}{2\delta}uT^\delta u_x+\frac{u^2}{8\delta^2}\text{d}x.
\end{align}

It is noteworthy that the energy space for the ILW equation, defined as the domain of the Hamiltonian $H_2(u)$, is $H^{\frac{1}{2}}(\mathbb{R})$.
This functional framework provides the natural setting for investigating solution properties.
In the context of weak solutions, Ginibre and Velo \cite{GV} established the existence of global solutions satisfying
$u \in L^\infty(\mathbb{R}, H^1(\mathbb{R})) \cap L^2_{\mathrm{loc}}(\mathbb{R}, H^{\frac{3}{2}}_{\mathrm{loc}}(\mathbb{R}))$.
For strong solutions, the first global well-posedness result in $H^s(\mathbb{R})$ was obtained by \cite{ABFS} for $s > \frac{3}{2}$.
Subsequent developments progressively lowered the regularity threshold: Molinet and Vento \cite{MV} extended well-posedness to $s > \frac{1}{2}$
(where unconditional uniqueness holds), while Molinet, Pilod, and Vento \cite{MPV} advanced it to $s > \frac{1}{4}$.
A significant breakthrough in the sharp low-regularity theory was achieved by Ifrim and Saut \cite{IS}, who proved global well-posedness
in $L^2(\mathbb{R})$ for small initial data. Most recently, Gassot and Laurens \cite{GAL} established sharp global well-posedness
for the ILW equation in $H^s(\mathbb{T})$ for all $s > -\frac{1}{2}$, thereby
completing the well-posedness theory on the torus. Additionally, they proved the
continuous convergence of solutions to those of the Benjamin-Ono equation in the
deep-water limit $\delta \to \infty$.

Based upon the facts above, the (\ref{ILW}) equation can naturally be viewed as a Hamiltonian system of the form \cite{KSA, LR}
\begin{align}
	u_t=\mathcal{J}\frac{\delta H_2(u)}{\delta u},
\end{align}
where $\mathcal{J}=\partial_x$, and $\frac{\delta H_2(u)}{\delta u}$ (subsequently simplified to $H_2'(u)$) refers to the variational derivative of $H_2(u)$ can be written as
\begin{align*}
	(\frac{\partial}{\partial \epsilon}H_2(u+\epsilon f))\Big|_{\epsilon=0}=\int_{\mathbb{R}}\frac{\delta H_{2}}{\delta u}(x)f(x)\text{d}x.
\end{align*}

Moreover, the ILW equation possesses a bi-Hamiltonian structure \cite{Sa}. However, unlike the KdV equation, the bi-Hamiltonian structure of the ILW equation is highly nontrivial due to its formulation involving both the spatial derivative operator $\partial_x$ and the singular integral operator $T^\delta$. This operator composition aligns the ILW equation with structural features characteristic of completely integrable systems in two spatial dimensions.

To formalize this connection, let subscripts $1, 2$ denote dependence on the independent variables $x:= x_1$ and $x_2$. For arbitrary functions $f_{12}$ and $g_{12}$, we define the associated bilinear form by
\begin{align}
	\langle f_{12},g_{12}\rangle
	&:= \int_{\mathbb{R}^{2}}f_{12}g_{12}^{*} \mathrm{d}x_{1}\mathrm{d}x_{2},\label{1.9}
\end{align}
where the asterisk superscript $^*$ denotes complex conjugation. Define operators in $L^{2}(\mathbb{R}^{2},\mathbb{C})$ (with domain $H^{1}(\mathbb{R}^{2},\mathbb{C})$)
\begin{align}
	\theta_{12}^{\pm}
	&:= u_{1}\pm u_{2} + i(\partial_{x_{1}} \mp \partial_{x_{2}}), \quad u_{j}=u(x_{j},t), \quad j=1,2.\label{1.10}
\end{align}
Then the two compatible Hamiltonian operators for the ILW equation are given by
\begin{align}
	\mathcal{J}_{12}^{(1)}
	:= \theta_{12}^{-}, \quad
	\mathcal{J}_{12}^{(2)}
	:= (i\theta_{12}^{-}T^\delta_{12} - \theta_{12}^{+})\theta_{12}^{-},
\end{align}
where $T^\delta_{12}$ is an extended operator defined as
\begin{align}
	(T^\delta_{12}f_{12})(x_{1},x_{2}) =\frac{1}{2\delta}\mathrm{P.V.}\int_{\mathbb{R}}\coth \left(\frac{\pi}{2\delta}[\xi-(x_1+x_2)]\right)F(\xi,x_1-x_2)\mathrm{d}\xi,\label{1.13}
\end{align}
and
\begin{align}
	f(x_1,x_2)=F(x_1+x_2,x_1-x_2).
\end{align}
Then the ILW hierarchy can be expressed in the form
\begin{align}\label{1.14}
	u_t&=\frac{i}{2^n}\int_{\mathbb{R}}\delta(x_1-x_2)(\mathcal{R}_{12}^\star)^n\theta_{12}^-\cdot1\mathrm{d}x_2\\\nonumber
	&=\frac{i}{2^n}\int_{\mathbb{R}}\delta(x_1-x_2)\theta_{12}^-\mathcal{R}_{12}^n\cdot1\mathrm{d}x_2=\mathcal{J}\frac{\delta H_n(u)}{\delta u},\quad \text{for all} ~n\in \mathbb{N},
\end{align}
where $\star$ denotes the adjoint with respect to the bilinear form (\ref{1.9}). The recursion operator $\mathcal{R}_{12}$ and its adjoint $\mathcal{R}_{12}^{\star}$ are defined by
\begin{equation}
	\mathcal{R}_{12} := (\mathcal{J}_{12}^{(1)})^{-1}\mathcal{J}_{12}^{(2)},\quad
	\mathcal{R}_{12}^{\star} := \mathcal{J}_{12}^{(2)}(\mathcal{J}_{12}^{(1)})^{-1} = i\theta_{12}^{-}T^\delta_{12} - \theta_{12}^{+},\label{1.15}
\end{equation}
and in view of (\ref{1.15}), they satisfy the compatibility condition
\begin{equation*}
	\mathcal{R}_{12}^{\star}\mathcal{J}_{12}^{(1)} = \mathcal{J}_{12}^{(1)}\mathcal{R}_{12}.
\end{equation*}
Consequently, the first few equations of the ILW hierarchy are
\begin{align*}
	&u_t-u_x=0, \quad \text{for}\quad n=1, \\
	&u_t+\frac{1}{\delta}u_x+2uu_x+T^\delta u_{xx}=0,\quad \text{for}\quad n=2;\\
	&u_t-\left(\frac{1}{4}({T^{\delta}}^2-1)u_{xx}+ u^3+\frac{3}{2}(uT^\delta u_x+T^\delta uu_{xx})\right)_x=0, \quad \text{for}\quad n=3.
\end{align*}

In common with the classical KdV and BO equations, between which the ILW equation (\ref{ILW}) serves as a model-theoretical bridge and admits a family of unique exact solitary-wave solutions \cite{AT,SA,SAK} of the form
\begin{align}
	u(x,t)=Q_c(x-ct-x_0), \quad Q_c(s)=\frac{a\sin(a\delta)}{\cosh(as)+\cos(a\delta)},\quad c>0,~ x_0\in \mathbb{R}, \label{1.16}
\end{align}
where $c$ represents the wave speed and $a$ is the unique solution of the transcendental equation
\begin{align*}
	a\delta \cot(a\delta)=1-c\delta, \quad a\in \left(0, \frac{\pi}{\delta}\right).
\end{align*}
Substituting (\ref{1.16}) into equation (\ref{ILW}) yields
\begin{align}
	T^\delta \partial_x Q_c +\left(\frac{1}{\delta}-c\right)Q_c+Q_c^2=0, \quad c>0.\label{1.17}
\end{align}
Beyond these fundamental solitary waves, the ILW equation (\ref{ILW}) also supports more complex solutions, including multi-solitons \cite{JE} that admit parametric representations similar to the single-soliton case \cite{AT,LP,M,PD}.
The ILW $n$-soliton solution $U^{(n)}_\mathbf{c}$ is characterized by a collection of wave speeds $\mathbf{c}=(c_1, c_2, \cdots, c_n)$ and initial positions $\mathbf{x}=(x_1, x_2, \cdots, x_n)$, where the wave speeds satisfy $c_j>0$ with $c_j\neq c_k$ for $j\neq k$ ($j,k=1, 2, \cdots, n$). Moreover, in the long-time limit, these $n$-soliton solutions decompose into a superposition of $n$ individual solitons as follows
\begin{align}
	\lim_{|t| \to +\infty} \left\| U^{(n)}(t,\cdot;\mathbf{c},\mathbf{x}) - \sum_{j=1}^{n} Q_{c_{j}}(\cdot - c_{j}t - x_{j}) \right\|_{H^{s}(\mathbb{R})} = 0, \quad s \in \mathbb{N}.\label{resolution}
\end{align}

In recent years, the stability theory for solitary waves and multi-solitons has emerged as a prominent research direction. Within soliton theory, stability concepts are systematically categorized into four distinct types based on analytical methodology and robustness guarantees: (i) linear (spectral) stability, concerning the eigenvalue distribution of linearized operators; (ii) Lyapunov (dynamical) stability, established via positive definiteness of the second variation of Lyapunov functionals at soliton solutions; (iii) orbital (nonlinear) stability, which requires that solutions remain within a neighborhood of the soliton orbit under finite-amplitude disturbances; and (iv) asymptotic stability, demanding convergence to specific soliton profiles.

As fundamental models in integrable systems, the stability of solitary waves for both KdV and BO equations has been extensively investigated. For instance, Maddocks and Sachs \cite{MS} established orbital stability of KdV multi-solitons in the energy space defined by conservation laws. Killip and Visan \cite{KV} proved orbital stability of KdV multi-solitons in $H^{-1}(\mathbb{R})$ using low-regularity conservation laws in \cite{BP}. For the BO equation, Wang \cite{LW} and Matsuno \cite{Mat} studied dynamical stability of multi-soliton solutions. Additionally, Badreddine, Killip and Visan \cite{BKP} proved that the multi-soliton solutions to the BO equation are uniformly orbitally stable in $H^s(\mathbb{R})$ with $-\frac{1}{2}<s<\frac{1}{2}$.

In contrast to the extensive results for KdV and BO equations, stability analysis for solitary wave solutions of the ILW equation remains comparatively underdeveloped. Existing studies primarily focus on orbital stability. Albert and Bona \cite{AB} employed a rigorous variational approach combined with spectral analysis to establish orbital stability of ILW solitary waves in its energy space $\mathcal{X} = \{g \in L^2(\mathbb{R})| \int_\mathbb{R}(1+\alpha(k))|\hat{g}|^2\mathrm{d}k < \infty\}$. Subsequent work \cite{A} extended these results to spaces $H^{\frac{1}{2}}(\mathbb{R})$, while \cite{PCA} addressed linear and orbital stability of periodic wave solutions in periodic Sobolev spaces.

Considering the growing importance of understanding coherent structures in nonlocal dispersive equations, a systematic analysis of soliton and multi-soliton stability for the ILW equation represents a significant open research direction. Hence, in this work, we aim to establish dynamical stability results of $n$-soliton solutions to the ILW equation (\ref{ILW}) in an appropriate Sobolev space.

Our strategy is to adapt the methods of Maddocks and Sachs \cite{MS}.
In accordance with the ideas in \cite{MS}, we construct an appropriate  Lyapunov functional $\mathcal{S}_{n}$ of the ILW $n$-solitons is given by
\begin{align}
	\mathcal{S}_{n}(u) = H_{n+1}(u) + \sum_{m=1}^{n} \mu_{m} H_{m}(u),\label{lagrange}
\end{align}
and $\mu_{m}$ are Lagrange multipliers which will be expressed in terms of the elementary symmetric functions of $c_{1},c_{2},\dots,c_{n}$. Using (\ref{lagrange}), this condition can be written as the following Euler-Lagrange equation
\begin{align}
	\frac{\delta H_{n+1}(u)}{\delta u} + \sum_{m=1}^{n} \mu_{m} \frac{\delta H_{m}(u)}{\delta u} = 0,\quad \text{at } u = U^{(n)}.\label{1.20}
\end{align}
The dynamical stability of $U^{(n)}$ is implied by the fact that $U^{(n)}(x)$ is a minimizer of the functional $H_{n+1}$ under the following $n$ constraints
\begin{align}
	H_{m}(u) = H_{m}\left(U^{(n)}\right),\quad m = 1,2,\dots,n,
\end{align}
which requires that the self-adjoint second variation of the operator $\mathcal{S}_n$,
\begin{align}
	\mathcal{L}_n:=\mathcal{S}''_{n}(U^{(n)}), \label{VL}
\end{align}
is strictly positive if one modulates the directions given by the constraints.

The demonstration of dynamical stability for the ILW equation relies on the recursion operators and the construction of a Lyapunov functional. This approach utilizes the trace formula derived from the inverse scattering transform (IST) framework to forge a link with conservation laws, in a manner analogous to techniques applied in the study of the Camassa-Holm equation \cite{AYP, CGI, DKS}. The stability proof centers on a spectral analysis of the second variation $\mathcal{L}_{n} = \mathcal{S}_{n}''(U^{(n)})$ evaluated at a smooth $n$-soliton $U^{(n)}$, coupled with an eigenvalue computation for the Hessian matrix $D = \left\{\frac{\partial^2 \mathcal{S}_{n}}{\partial \mu_{i} \partial \mu_{j}} \right\}$.

The spectral characterization of the linearized operator $\mathcal{L}_{n}$ around the $n$-soliton $U^{(n)}$ is central to this endeavor. In the case of a single soliton, the operator $L_n = H''_{n+1}(Q_c) + c H''_{n}(Q_c)$ is analyzed. The inertia index $\mathrm{in}(L_n)$ is determined by constructing the derivative operators $\mathcal{J}L_n$ and $L_n\mathcal{J}$ and leveraging IST theory, specifically employing Jost solutions and square eigenfunctions. For the general $n$-soliton, the recursive operator $\mathcal{R}$ originating from the bi-Hamiltonian structure is applied to compute $\mathrm{in}(\mathcal{L}_n)$.

Finally, we conclude by proving the orbital stability of double solitons. This final result is obtained through a spectral analysis of the linearized operator $\mathcal{L}_{2}$ around the double soliton $U^{(2)}$, supported by a contradictory argument. Our analysis shows that $\mathcal{L}_{2}$ has one simple negative eigenvalue and one double eigenvalue at zero.

In what follows, we will present our main results. The first is the dynamical stability of the multi-solitons to the ILW equation (\ref{ILW}).
\begin{theorem}[\textbf{Dynamical stability of $n$-solitons}]\label{thm1.1}
Given $ n\in \mathbb{N} $, $ n\geq 1 $, a collection of wave speeds $\textbf{c} = (c_{1},\dots,c_{n})$ with $ 0 < c_{1} < \dots < c_{n} $ and a collection of space transitions $\textbf{x} = (x_{1},\dots,x_{n})\in \mathbb{R}^{n}$, let $ U^{(n)}(\cdot,\cdot;\textbf{c},\textbf{x}) $ be the corresponding $n$-soliton solutions of (\ref{ILW}). Then for any $ \epsilon > 0 $, there exists $ \delta > 0 $ such that for any $ u_{0}\in H^{\frac{n}{2}}(\mathbb{R}) $, the following stability property holds, if

\begin{align}
	&\bigl\lVert u_{0} - U^{(n)}(0,\cdot;\textbf{c},\textbf{x}) \bigr\rVert_{H^{\frac{n}{2}}} < \delta,
\end{align}
then for any $t \in \mathbb{R}$, the corresponding solution of (\ref{ILW}) verifies
\begin{align}
\inf_{\tau \in \mathbb{R},\  \textbf{y} \in \mathbb{R}^{n}} \bigl\lVert u(t) - U^{(n)}(\tau,\cdot;\textbf{c},\textbf{y}) \bigr\rVert_{H^{\frac{n}{2}}} < \epsilon.
\end{align}
\end{theorem}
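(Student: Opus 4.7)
The plan is to implement the Maddocks-Sachs variational framework adapted to the ILW setting. Following the scheme announced in the introduction, I would introduce the Lyapunov functional $\mathcal{S}_n(u) = H_{n+1}(u) + \sum_{m=1}^n \mu_m H_m(u)$ with Lagrange multipliers $\mu_m$ chosen so that $U^{(n)}$ is a critical point, i.e., so that the Euler-Lagrange equation (\ref{1.20}) holds. To identify these multipliers concretely, I would apply the trace formula of the inverse scattering transform for the ILW equation, which expresses each $H_m(u)$ as a symmetric polynomial in the scattering data; evaluated on an $n$-soliton, the soliton part of $H_m$ becomes an elementary symmetric polynomial in $c_1,\ldots,c_n$, and matching coefficients of $\prod_{j=1}^n(\lambda-c_j)$ yields explicit formulas for the $\mu_m$ in terms of the elementary symmetric functions of the speeds.

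The heart of the proof is a precise spectral description of $\mathcal{L}_n = \mathcal{S}_n''(U^{(n)})$. For a single soliton $Q_c$, I would first analyze $L_n = H_{n+1}''(Q_c) + cH_n''(Q_c)$ by composing with $\mathcal{J}=\partial_x$: the operators $\mathcal{J}L_n$ and $L_n\mathcal{J}$ are related to the linearization of the ILW hierarchy, whose spectrum is accessible through squared eigenfunctions built from the Jost solutions of the Lax pair. This should give that $L_n$ has one simple negative eigenvalue and a simple kernel spanned by $Q_c'$. To pass from the single-soliton case to the $n$-soliton regime, I would invoke the recursion operator $\mathcal{R}_{12}$ from the bi-Hamiltonian structure (\ref{1.15}) together with the hierarchy identity (\ref{1.14}); this allows $\mathcal{L}_n$ to be expressed via $\prod_{j=1}^n(\mathcal{R}-c_j)$, so its inertia can be read off from the spectrum of $\mathcal{R}$ on the generalized kernel generated by $\partial_{x_j} U^{(n)}$ and $\partial_{c_j}U^{(n)}$. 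The net effect is that $\mathcal{L}_n$ has finite Morse index together with an $n$-dimensional kernel stemming from the $n$-parameter translation family.

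With this spectral picture in hand, I would finish in the spirit of Grillakis-Shatah-Strauss. The Hessian matrix $D = (\partial^2 \mathcal{S}_n / \partial \mu_i \partial \mu_j)_{i,j}$ controls the sign of $\mathcal{L}_n$ on the tangent space of the isospectral manifold; combined with the conservation of $H_1,\ldots,H_{n+1}$ along the ILW flow (which relies on the well-posedness theory in $H^{n/2}(\mathbb{R})$ recalled in the introduction) this yields coercivity of $\mathcal{S}_n(u)-\mathcal{S}_n(U^{(n)})$ modulo the translation orbit. A standard continuity-in-time / bootstrap argument then converts coercivity into the dynamical stability estimate claimed in the theorem, where the infima over $\tau$ and $\textbf{y}$ precisely account for modulation along the kernel directions of $\mathcal{L}_n$.

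The main obstacle will be the spectral bookkeeping for $\mathcal{L}_n$ at a general $n$-soliton. Unlike the KdV case treated by Maddocks-Sachs, the operator $T^\delta$ is nonlocal and ILW solitons decay only algebraically, so the classical Sturm-Liouville arguments that count negative eigenvalues via squared eigenfunctions are not directly available; one must instead work at the level of the bi-Hamiltonian recursion and the IST trace formula, and verify that the degenerations of $\mathcal{R}$ at the soliton eigenvalues $c_j$ produce exactly the expected Jordan block structure. Checking the nondegeneracy of the Hessian $D$ (needed to separate the negative and zero modes of $\mathcal{L}_n$) and controlling the cross-terms that appear when solitons overlap in physical space will be the technically most delicate steps of the argument.
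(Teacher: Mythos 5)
Your overall strategy is the same as the paper's: the Maddocks--Sachs Lyapunov functional $\mathcal{S}_n$, Lagrange multipliers given by elementary symmetric functions of the speeds, a spectral analysis of $\mathcal{L}_n$ via the recursion operator and squared eigenfunctions, and a comparison with the Hessian matrix $D$. However, there is a genuine gap at the decisive step. The Maddocks--Sachs criterion does not merely require $\mathcal{L}_n$ to have ``finite Morse index''; it requires the exact identity $n(\mathcal{L}_n)=p(D)$, and your sketch never establishes either count. Worse, the spectral picture you describe would produce the wrong count: you assert that the single-soliton linearization has ``one simple negative eigenvalue,'' which, propagated through the asymptotic decoupling $\sigma(\mathcal{L}_n(t))\to\bigcup_j\sigma(L_{n,j})$, would yield Morse index $n$ (one negative direction per soliton). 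The paper shows this is false: writing $L_{n,j}=\mathcal{S}_n''(Q_{c_j})$ and using $L_{n,j}\,\partial_{c_j}Q_{c_j}=-\prod_{k\neq j}(c_k-c_j)\,Q_{c_j}$, the sign of $\Gamma_j=-\prod_{k\neq j}(c_k-c_j)$ alternates with $j$ under the ordering $0<c_1<\cdots<c_n$, so only the odd-indexed operators $L_{n,2k-1}$ carry a negative eigenvalue and $\mathrm{in}(\mathcal{L}_n)=\bigl(\lfloor\tfrac{n+1}{2}\rfloor,n\bigr)$. (Even for a single soliton the inertia of $L_m=H_{m+1}''(Q)+cH_m''(Q)$ depends on the parity of $m$: one negative eigenvalue for odd $m$, none for even $m$.)

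The companion computation you omit is $p(D)$. The paper evaluates $D=AB$ with $A=(\partial c_j/\partial\mu_i)$, $B=(\partial H_j/\partial c_i)$, uses the explicit formula $\partial H_j/\partial c_i=(-1)^{j-1}\mathcal{G}^{\delta}_{\kappa_i,c_i}c_i^{j-1}$ from the trace identity, and applies Sylvester's law of inertia to reduce to a diagonal matrix with entries $(-1)^{n+1}\frac{\mathcal{G}^{\delta}_{\kappa_j,c_j}}{c_j}\prod_{k\neq j}(c_k-c_j)$, whose signs alternate and give exactly $\lfloor\tfrac{n+1}{2}\rfloor$ positive entries. Only after both counts are proved equal can one conclude, via the augmented Lagrangian $\Delta(u)=\mathcal{S}_n(u)+\tfrac{C}{2}\sum_j(H_j(u)-H_j(U^{(n)}))^2$, that $U^{(n)}$ is a non-isolated unconstrained minimizer and hence dynamically stable. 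Without the matched count $\lfloor\tfrac{n+1}{2}\rfloor=n(\mathcal{L}_n)=p(D)$, the coercivity of $\Delta$ cannot be obtained and the final stability estimate does not follow.
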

Since the stability result in \cite{MS} coincides with orbital stability theorem in \cite{GSS} for $n=1$, we obtain the following corollary regarding the orbital stability of ILW soliton solutions.
\begin{cor}
The single soliton solution of the ILW equation is orbital stable in $H^{\frac{1}{2}}(\mathbb{R})$.
\end{cor}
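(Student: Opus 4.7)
The plan is to deduce the corollary directly from the $n=1$ specialization of Theorem \ref{thm1.1}, reformulated in the classical orbital stability language. First I would instantiate Theorem \ref{thm1.1} with $n=1$: for any $\epsilon > 0$ there exists $\delta > 0$ such that every $u_0 \in H^{1/2}(\mathbb{R})$ satisfying $\| u_0 - U^{(1)}(0,\cdot;c,x_0) \|_{H^{1/2}} < \delta$ generates a solution $u(t)$ with $\inf_{\tau \in \mathbb{R},\, y \in \mathbb{R}} \| u(t) - U^{(1)}(\tau,\cdot;c,y) \|_{H^{1/2}} < \epsilon$. Since (\ref{1.16}) gives $U^{(1)}(\tau,x;c,y) = Q_c(x - c\tau - y)$, the pair $(\tau,y)$ contributes only via the spatial shift $z := c\tau + y \in \mathbb{R}$, so the infimum collapses to $\inf_{z \in \mathbb{R}} \| u(t) - Q_c(\cdot - z) \|_{H^{1/2}} < \epsilon$, which is precisely orbital stability with respect to the translation group.

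Next I would verify that the reduction is consistent with the Grillakis-Shatah-Strauss framework. The Lyapunov functional (\ref{lagrange}) specializes to $\mathcal{S}_1(u) = H_2(u) + \mu_1 H_1(u)$, and the Euler-Lagrange equation (\ref{1.20}) at $u = Q_c$ recovers the solitary-wave profile equation (\ref{1.17}) with multiplier $\mu_1 = c - \tfrac{1}{\delta}$. The spectral information on $\mathcal{L}_1 = \mathcal{S}_1''(Q_c)$ that is established en route to Theorem \ref{thm1.1} (one simple negative eigenvalue, a simple kernel spanned by the translational mode $\partial_x Q_c$, and positive spectrum bounded away from zero on the appropriate codimension-one subspace) is exactly the spectral hypothesis invoked in the GSS abstract theorem. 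Coupled with well-posedness of (\ref{ILW}) in $H^{1/2}(\mathbb{R})$ and the conservation of $H_1$ and $H_2$ along the flow, this delivers orbital stability in the energy space.

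The only subtlety is the translation between the Maddocks-Sachs modulation variables $(\tau, \mathbf{y})$ and the single translation parameter in the GSS statement. However, for $n = 1$ this translation is immediate: the dynamical symmetry distinguishing $U^{(1)}(\tau,\cdot;c,y)$ from the static profile $Q_c$ is pure rigid transport at speed $c$, so time-translation is absorbed into spatial translation with no loss. Consequently, no analytical input beyond Theorem \ref{thm1.1} and the standard GSS framework is required, and the corollary follows by inspection; I expect the write-up to be essentially a one-paragraph reduction rather than a new argument.
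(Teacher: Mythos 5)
Your proposal is correct and follows essentially the same route as the paper, which deduces the corollary in one line from the observation that the Maddocks--Sachs stability statement of Theorem \ref{thm1.1} with $n=1$ coincides with the Grillakis--Shatah--Strauss notion of orbital stability, the two modulation parameters $(\tau,y)$ collapsing to a single spatial translation. Your additional verification of the GSS spectral hypotheses for $\mathcal{L}_1$ is consistent with the paper's spectral analysis but is not needed beyond what Theorem \ref{thm1.1} already provides.
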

\begin{re}
For $n \geq 2$, it should be noted that the stability result in \cite{MS} aligns with the notion of orbital stability in \cite{GSS} when the stability framework of \cite{MS} is suitably extended. In Hamiltonian systems, invariant functionals (i.e., integrals of motion) generate Hamiltonian flows that commute with the original time evolution.
\end{re}

Then we will provide a more precise description of the orbital stability of the double solitons to the ILW equation.

\begin{theorem}[\textbf{Orbital stability of double solitons}]\label{thm1.2}
	The double solitons $U^{(2)}_{c_1,c_2}(t,x;x_1,x_2)$ of ILW equation (\ref{ILW}) with $0<c_1<c_2$ are orbitally stable in $H^1(\mathbb{R})$ in the following sense.
There exist parameters $\epsilon_0$ and $A_0$, depending on $c_1$ and $c_2$, if there exists $\epsilon \in (0,\epsilon_0)$ such that for any $u_0\in H^1(\mathbb{R})$,
\begin{align}\label{u02}
	\bigl\lVert u_0 - U^{(2)}_{c_1,c_2}(0;0,0) \bigr\rVert_{H^1(\mathbb{R})} < \epsilon,
\end{align}
then there exist $x_1(t), x_2(t) \in \mathbb{R}$, such that the corresponding solution $u(t,x)$ of the ILW equation (\ref{ILW}) with the initial data $u(0)=u_0$ satisfies $u_t \in C([0,\infty),H^1(\mathbb{R}))$ and
\begin{align}\label{1.22}
	\sup_{t\in (0,\infty)}|\!|u(t)-U^{(2)}_{c_1,c_2}(t,x;x_1,x_2)|\!|_{H^1(\mathbb{R})}<A_0\epsilon,
\end{align}
with
\begin{align}
	\sup_{t\in (0,\infty)}(|x_1'(t)|+|x_2'(t)|)\leq CA_0\epsilon.
\end{align}
\end{theorem}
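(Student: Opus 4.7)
The plan is to upgrade the dynamical stability from Theorem~\ref{thm1.1} (specialized to $n=2$, which already controls the $H^1$-distance from the double-soliton manifold) into the explicit orbital form, with modulation parameters $x_1(t),x_2(t)$ and the linear rate $A_0\epsilon$. The central tools will be (i) a sharp spectral description of $\mathcal{L}_2=\mathcal{S}_2''(U^{(2)})$ and (ii) a two-parameter modulation in the translation directions followed by a bootstrap argument.

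First I establish the spectral picture of $\mathcal{L}_2$ previewed in the introduction: one simple negative eigenvalue, a two-dimensional kernel, and the rest of the spectrum bounded away from zero. Differentiating the Euler--Lagrange equation (\ref{1.20}) in $x_1$ and $x_2$ puts $\partial_{x_1}U^{(2)}$ and $\partial_{x_2}U^{(2)}$ into $\ker\mathcal{L}_2$; the exact count of negative and zero modes is the general inertia $\mathrm{in}(\mathcal{L}_n)$ computed through the recursion operator $\mathcal{R}$ and the squared Jost eigenfunctions, specialized to $n=2$. The essential spectrum stays nonnegative and away from zero thanks to the symbol $-2\pi\xi\coth(2\pi\delta\xi)$ and the algebraic decay of $U^{(2)}$. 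With this spectral data, a Weinstein-type argument will give $\langle\mathcal{L}_2 v,v\rangle\gtrsim\|v\|_{H^1}^2$ on the subspace cut out by the two orthogonality conditions against $\partial_{x_j}U^{(2)}$ together with the two linearized constraints $\langle H_m'(U^{(2)}),v\rangle=0$, $m=1,2$; non-degeneracy here reduces to the invertibility of the Hessian $D=\{\partial^2\mathcal{S}_2/\partial\mu_i\partial\mu_j\}$ already used for Theorem~\ref{thm1.1}.

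Next I build the modulation. For $u(t)$ in a small $H^1$-tube of the manifold $\mathcal{M}=\{U^{(2)}_{c_1,c_2}(t,\cdot;y_1,y_2):(y_1,y_2)\in\mathbb{R}^2\}$ (nonempty for all $t$ by Theorem~\ref{thm1.1}), the implicit function theorem produces $C^1$ functions $x_1(t),x_2(t)$ so that the remainder $v(t)=u(t)-U^{(2)}_{c_1,c_2}(t;x_1(t),x_2(t))$ satisfies the orthogonality conditions
\begin{align*}
\bigl\langle v(t),\partial_{x_j}U^{(2)}_{c_1,c_2}(t;x_1(t),x_2(t))\bigr\rangle_{L^2}=0,\quad j=1,2.
\end{align*}
Conservation of $H_1$ and $H_2$ together with the expansion $\mathcal{S}_2(u)-\mathcal{S}_2(U^{(2)})=\tfrac12\langle\mathcal{L}_2 v,v\rangle+O(\|v\|_{H^1}^3)$ and the coercivity from the previous step then yield $\|v(t)\|_{H^1}\leq A_0\epsilon$ as long as $v$ remains small; a contradictory bootstrap closes this globally, since if $T>0$ were the first time at which the bound fails, coercivity plus Hamiltonian conservation would give a strict improvement, contradicting the definition of $T$. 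Finally, to bound $|x_j'(t)|$, I differentiate the orthogonality conditions in $t$, substitute (\ref{ILW}) for $\partial_t u$, and solve the resulting linear $2\times 2$ system whose matrix at $v=0$ is the invertible Gram matrix of $\partial_{x_j}U^{(2)}$; the right-hand side is linear in $v$, giving $|x_1'(t)|+|x_2'(t)|\lesssim\|v(t)\|_{H^1}\leq C A_0\epsilon$.

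The main obstacle is the spectral step: verifying that $\ker\mathcal{L}_2$ is exactly two-dimensional and that $\mathcal{L}_2$ has precisely one negative eigenvalue. Because $T^\delta$ is non-local and $U^{(2)}$ decays only algebraically, $\mathcal{L}_2$ is not a Schr\"odinger operator, so Sturm--Liouville oscillation theory is unavailable and the kernel count must be carried out through the bi-Hamiltonian and IST machinery, via the recursion operator $\mathcal{R}$ and the squared eigenfunctions introduced for Theorem~\ref{thm1.1}. Once this spectral picture is secured, the modulation, coercivity and bootstrap steps follow the classical Benjamin--Bona--Weinstein scheme.
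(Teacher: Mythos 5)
Your proposal is correct and follows essentially the same route as the paper: the inertia $\mathrm{in}(\mathcal{L}_2)=(1,2)$ from the recursion-operator/squared-eigenfunction analysis, modulation via the implicit function theorem in the two translation directions, the Taylor expansion of $\mathcal{S}_2$, coercivity modulo the conserved-quantity direction, and a maximal-time bootstrap with the $|x_j'(t)|$ bound obtained by differentiating the orthogonality conditions. The only notable difference is in bookkeeping: where you impose the linearized constraints $\langle H_m'(U^{(2)}),v\rangle=0$ abstractly and invoke invertibility of $D$, the paper constructs the explicit function $\Psi=\frac{\partial_{c_1}U-\partial_{c_2}U}{c_1-c_2}$ with $\mathcal{L}_2\Psi=-U$ and $\langle\mathcal{L}_2\Psi,\Psi\rangle<0$ to avoid the negative eigenfunction $U_{-1}$, yielding a penalized coercivity estimate $\langle\mathcal{L}_2 z,z\rangle\geq v_2\|z\|_{H^1}^2-\frac{1}{\mu_1}(z,U)_{L^2}^2$ whose extra term is then absorbed using $L^2$-norm conservation --- the same mechanism you describe, made quantitative.
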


Moreover, something is interesting is that one can express the negative eigenvalues of the isoinertial operator $\mathcal{L}_n$ in terms of the wave speeds $\{c_j\}^n_{j=1}$ as follows.
\begin{theorem}[\textbf{The negative eigenvalues of $\mathcal{L}_n$}]\label{th1.3}
	The linearized operator $\mathcal{L}_{n}$ about the $n$-soliton solution possesses exactly $\left\lfloor\frac{n+1}{2}\right\rfloor$ negative eigenvalues $\lambda_{k}$, where $k=1,2,\cdots,\left\lfloor\frac{n+1}{2}\right\rfloor$ and $\lfloor x\rfloor$ denotes the floor function. Furthermore, the negative eigenvalues of $\mathcal{L}_n(n\geq 1$)  have the following expression
	\begin{align}\label{negetive}
		\lambda_{k} = -M^\delta_k (c_{2k-1}-\frac{1}{\delta})\prod_{ j\neq 2k-1}^{n}(c_{j}-c_{2k-1}),\quad k=1,2,\cdots,\left\lfloor\frac{n+1}{2}\right\rfloor,
	\end{align}
	where $M^\delta_k$ are positive constants independent of the wave speeds $c_1,\cdots, c_n$.
\end{theorem}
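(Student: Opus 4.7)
The plan is to combine the Maddocks--Sachs isoinertial framework \cite{MS} with the bi-Hamiltonian recursion structure (\ref{1.14})--(\ref{1.15}) of the ILW hierarchy. The central observation is that the hierarchy relation $\mathcal{J} H'_{m+1}(u) = \mathcal{R}(u)\,\mathcal{J} H'_{m}(u)$ allows the Euler--Lagrange identity (\ref{1.20}) evaluated at $u = U^{(n)}$ to be rewritten as an operator-polynomial identity
\[
\mathcal{P}(\mathcal{R})\big|_{u=U^{(n)}}\,\mathcal{J} H'_{1}(U^{(n)}) = 0, \qquad \mathcal{P}(\lambda) := \lambda^{n} + \sum_{m=1}^{n}\mu_{m}\lambda^{m-1}.
\]
Since the discrete Lax spectrum of $\mathcal{R}$ at the $n$-soliton consists precisely of the shifted wave speeds $\{c_{j} - 1/\delta\}_{j=1}^{n}$ (already visible at the single-soliton level via the stationary equation (\ref{1.17})), matching roots forces $\mathcal{P}(\lambda) = \prod_{j=1}^{n}\bigl(\lambda - (c_{j} - 1/\delta)\bigr)$. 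This simultaneously identifies the Lagrange multipliers $\mu_{m}$ as signed elementary symmetric polynomials in $\{c_{j} - 1/\delta\}$ and brings the wave speeds out as the natural spectral parameters governing $\mathcal{L}_{n}$.

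Next, I would decompose the action of $\mathcal{L}_{n}$ along the tangent directions to the $n$-soliton manifold. The translation parameters $x_{1},\ldots,x_{n}$ contribute the $n$-dimensional kernel spanned by $\partial_{x_{k}} U^{(n)}$. For the speed-variation directions $\Psi_{j} := \partial_{c_{j}} U^{(n)}$, differentiating the polynomial identity above in $c_{j}$ and combining self-adjointness of $\mathcal{L}_{n}$ with the IST trace formulas linking the conservation laws to scattering data, one obtains a diagonal pairing of the shape
\[
\bigl\langle \mathcal{L}_{n}\Psi_{j},\Psi_{k}\bigr\rangle \;=\; -\,M_{j}\,\bigl(c_{j}-\tfrac{1}{\delta}\bigr)\,\mathcal{P}'\!\bigl(c_{j}-\tfrac{1}{\delta}\bigr)\,\delta_{jk},
\]
where $M_{j}>0$ is the squared-eigenfunction normalization and is independent of $\{c_{\ell}\}$, mirroring the KdV calculation of \cite{MS} and the Camassa--Holm analogues in \cite{AYP,CGI,DKS}. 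Using $\mathcal{P}'(c_{j}-1/\delta) = \prod_{k\neq j}(c_{j}-c_{k})$, the sign of the $j$-th diagonal entry is $(-1)^{j-1}\operatorname{sgn}(c_{j}-1/\delta)$; hence negative contributions occur exactly for odd indices $j = 2k-1$, yielding the count $\lfloor(n+1)/2\rfloor$ and the closed formula (\ref{negetive}).

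The principal obstacle is the rigorous spectral bookkeeping on the continuous part of the spectrum: one must show that $\mathcal{P}(\mathcal{R})$ is strictly positive on the radiation modes of $\mathcal{R}$ at $U^{(n)}$, so that the only non-positive spectrum of $\mathcal{L}_{n}$ is captured by the finite-dimensional computation above. Because $T^{\delta}$ is a nonlocal zero-order Fourier multiplier, the ILW Lax operator is not of Schr\"odinger type, and the construction of Jost solutions, the completeness of squared eigenfunctions, and the identification of how $\mathcal{R}$ acts on scattering states all require the finer ILW inverse-scattering apparatus invoked earlier. Once these ingredients are in place, the finite-dimensional diagonalization sketched above delivers both the count $\lfloor(n+1)/2\rfloor$ and the explicit form (\ref{negetive}) of the negative eigenvalues.
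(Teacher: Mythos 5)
There is a genuine gap, on two counts.

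First, the spectral input to your root-matching step is wrong. By the paper's Lemma \ref{lem3.2}, $\mathcal{R}(Q_{c})Q_{c}=-c\,Q_{c}$: the discrete eigenvalue of the recursion operator at a soliton is $-c_{j}$, not $c_{j}-\tfrac{1}{\delta}$, and accordingly Proposition 2.1 identifies the Lagrange multipliers $\mu_{m}$ as elementary symmetric functions of the unshifted speeds, i.e. $\mathcal{P}(\lambda)=\prod_{j}(\lambda+c_{j})$. The factor $(c_{2k-1}-\tfrac{1}{\delta})$ in (\ref{negetive}) does not come from the spectrum of $\mathcal{R}$ at all; in the paper it enters only through the explicit base-case computation (via the methods of [BBSSB]) of the unique negative eigenvalue $\lambda_{1}=-\frac{c-1/\delta}{2a^{2}\sin^{2}(a\delta)}$ of $L_{1}=H_{2}''(Q_{c})+cH_{1}''(Q_{c})$, whose symbol carries the $\tfrac{1}{\delta}$ shift from (\ref{1.17}). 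Your scheme has no mechanism to produce this factor or the constant $M_{k}^{\delta}$ once the spectrum of $\mathcal{R}$ is corrected.

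Second, and more fundamentally, the finite-dimensional pairing $\langle\mathcal{L}_{n}\Psi_{j},\Psi_{k}\rangle$ with $\Psi_{j}=\partial_{c_{j}}U^{(n)}$ determines the inertia (this is essentially how the paper counts $\lfloor\frac{n+1}{2}\rfloor$ negative directions in Section \ref{sec.3}), but it does not yield the eigenvalues themselves: the $\Psi_{j}$ are not eigenfunctions of $\mathcal{L}_{n}$ — by (\ref{3.48}) one has $L_{n,j}\,\partial_{c_{j}}Q_{c_{j}}=\Gamma_{j}Q_{c_{j}}$, a multiple of $Q_{c_{j}}$ rather than of $\partial_{c_{j}}Q_{c_{j}}$. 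So reading the "closed formula" off the diagonal quadratic-form entries is not valid, and indeed your sign bookkeeping ($\mathcal{P}'(c_{j}-1/\delta)=\prod_{k\neq j}(c_{j}-c_{k})$, of sign $(-1)^{n-j}$) does not reproduce the statement for even $n$. The paper instead gets the actual eigenvalues by induction on $n$: it combines the isoinertial property and the asymptotic decoupling $\sigma(\mathcal{L}_{n}(t))\to\bigcup_{j}\sigma(\mathcal{L}_{n,j})$ with the operator factorization $\mathcal{L}_{p+1,2k-1}=(\mathcal{R}(Q_{c_{2k-1}})+c_{p+1})\,\mathcal{S}_{p}''(Q_{c_{2k-1}})$, multiplying the discrete eigenvalue $c_{p+1}-c_{2k-1}$ of the recursion factor by the inductively known negative eigenvalue, with the explicit $n=1$ formula as the seed. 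Your correct instinct about the remaining analytic burden (positivity of $\mathcal{P}(\mathcal{R})$ on radiation modes, completeness of squared eigenfunctions) does not repair these two structural defects.
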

\begin{re}
	Since the ILW equation serves as the bridge connecting KdV equation and BO equation. The stability results above are also valid for both equations. The specific results can be found in \cite{WA} and \cite{LW}, respectively.
\end{re}

The proof of our main stability result, Theorem \ref{thm1.1}, proceeds by transforming the stability problem into a spectral analysis of an explicitly constructed linearized operator. Through the IST method, we establish a fundamental connection between the Hamiltonians and scattering data. Similarly, the proof of Theorem \ref{thm1.2} on orbital stability is reduced to a detailed spectral analysis of the linearized operator $\mathcal{L}_n$ arising from the second variation of the Lyapunov functional. Finally, the proof of Theorem \ref{th1.3} relies on the application of spectral analysis results. The proof strategy unfolds as follows.

In Section \ref{sec.2}, we begin by establishing the properties of the singular integral operators $T^\delta$ and $T^\delta_{12}$. We then introduce the IST framework for the ILW equation, from which the $n$-soliton solutions are formally derived.

Section \ref{sec.3} is devoted to a detailed spectral analysis of the linearized operator $\mathcal{L}_n = \mathcal{S}_n''(U^{(n)})$. We demonstrate that its inertia index is $\left( \left\lfloor \frac{n+1}{2} \right\rfloor, n \right)$, thereby satisfying the crucial stability condition.

In Section \ref{sec.4}, we apply the framework of constrained variational principles to establish the dynamical stability result in Theorem \ref{thm1.1}. We then prove Theorem \ref{thm1.2} on orbital stability of double solitons ($n=2$) through a refined spectral analysis of the linearized operator $\mathcal{L}_2$, yielding a direct proof of orbital stability. Finally, we establish Theorem \ref{th1.3} based on the spectral analysis results from Section \ref{sec.3}.

\section{Background results for the ILW equation}\label{sec.2}
	In this section, we present fundamental results from the inverse scattering transform theory for the ILW equation (\ref{ILW}).
\subsection{Properties of the operator $T^\delta$ and $T^\delta_{12}$}\label{sub2.1}
We first systematize key analytical properties of the fundamental singular integral operator $T^\delta$ and its bilinear extension $T^\delta_{12}$, for more details can refer to \cite{S1,Sa}.

 For the operator $T^\delta$, it follows the commutation relation
\begin{equation}
	T^\delta \partial_x f = \partial_x T^\delta f
\end{equation}
holds for $f \in H^1(\mathbb{R})$. The operator $T^\delta$ is skew-adjoint satisfying
\begin{equation}
	\langle T^\delta f, g \rangle = -\langle f, T^\delta g \rangle,
\end{equation}
and maps even functions to odd functions. The extended operator $T^\delta_{12}$ admits two equivalent representations. The first one is defined in (\ref{1.13}) and the second can be presented as
\begin{equation}
	T^\delta_{12} = i (1 + e_1 e_2) (1 - e_1 e_2)^{-1},
\end{equation}
where $(e_j f)(x_j) = f(x_j + i\delta)$ is the displacement operator. Moreover, for $f_{12} \in L^2(\mathbb{R}^2, \mathbb{C})$, we can know the following properties
\begin{equation}
	T^{\delta *}_{12} = -T^\delta_{12}, \quad \partial_{x_j} T^\delta_{12} f_{12} = T^\delta_{12} \partial_{x_j} f_{12},  \ j=1,2.
\end{equation}
and for any $a \in L^2(\mathbb{R})$, we have
\begin{equation}
	T^\delta_{12} a(x_j) = T^\delta_j a(x_j).
\end{equation}

If we defining the projections
\begin{equation}
	f_{12}^{(\pm)} = \pm \frac{1}{2} (1 \mp i T^\delta_{12}) f_{12},
\end{equation}
then $f_{12}^{(+)}$ and $f_{12}^{(-)}$ are holomorphic in $\operatorname{Im}(x_1 + x_2) > 0$  and $\operatorname{Im}(x_1 + x_2) < 0$, repectively. Moreover, one has
\begin{equation}
	T^\delta_{12} (f_{12}^{(+)} - f_{12}^{(-)}) = i (f_{12}^{(+)} + f_{12}^{(-)}).
\end{equation}
 When in the deep-water limit $\delta \to \infty$, $T_{12}$ reduces to the generalized Hilbert transform referred to (\ref{BO}), namely
\begin{equation}
	\lim_{\delta \to \infty} (T^\delta_{12} f)(x_1,x_2) = (H_{12} f)(x_1,x_2) = \frac{1}{\pi} \int_{\mathbb{R}} \frac{F(y, x_1 - x_2)}{y - (x_1 + x_2)}  dy.
\end{equation}

\subsection{Eigenvalue problem and conservation laws}
Here we list some results related to the theory of the inverse scattering transform ILW equation. For detailed proof can refer to \cite{K,KAS,KSA}.

The IST scheme for ILW equation (\ref{ILW}) is given by
\begin{align}
	&i\psi_x^++(u-\lambda)\psi^+=\mu\psi^-,\label{lax1}\\
	&i\psi_t^{\pm}+2i(\lambda+\frac{1}{2\delta})\psi_x^{\pm}+\psi^+_{xx}+[\mp iu_x-T^\delta u_x+v]\psi^{\pm}=0,\label{lax2}
\end{align}
where $\lambda$ and $\mu$ are constants given by $\lambda=-k\coth 2k\delta, \mu=k\text{cosech} 2k\delta$, and $v$ is a constant determined by fixing the Jost functions of (\ref{lax1}). Functions $\psi^{\pm}(x)$ denote boundary values
\[
\psi^{\pm}(x) = \lim_{\operatorname{Im} z \to 0^\pm} \psi^{\pm}(z)
\]
analytic in horizontal strips $0 < |\operatorname{Im} z| < 2\delta$ with vertical periodicity. In order to analyse the scattering problem of (\ref{lax1}), one can define $W(x,k)=\psi(x,k)e^{ikx}$, where $\psi(x,k)=\psi^{\pm}(x\pm i\delta,k)$ and $W(x,k)=W^{\pm}(x\pm i\delta,k)$, then the spectral problem (\ref{lax1}) and (\ref{lax2}) become
\begin{align}
	&iW_{x}^{+} + \left(\zeta_{+} + \frac{1}{2\delta}\right)(W^{+} - W^{-}) = -uW^{+}, \label{lax3}\\
	&iW_{t}^{\pm} - 2i\zeta_{+}W_{x}^{\pm} + W_{xx}^{\pm} + \left[\mp iu_{x} - Tu_{x} + \rho\right] W^{\pm} = 0, \label{lax4}
\end{align}
where $\rho = -2k\zeta_{+} + k^{2} + v$, $\zeta_{\pm} = \zeta_{\pm}(k) = k \pm (k\operatorname{coth} 2k\delta - \frac{1}{2\delta})$.

For real $k$, when $|x|\to\infty $, the Jost solutions $N(x,k), \bar{N}(x,k), M(x,k), \bar{M}(x,k)$are defined as the solution to (\ref{lax3}) with the boundary conditions
\begin{align}\label{2.13}
	\left.
	\begin{array}{l}
		M(x; k) \rightarrow 1, \\
		\bar{M}(x; k) \rightarrow e^{2ikx},
	\end{array}
	\right\}
	\text{ as } x \rightarrow -\infty, \quad
	\left.
	\begin{array}{l}
	N(x; k) \rightarrow e^{2ikx}, \\
		\bar{N}(x; k) \rightarrow 1 ,
	\end{array}
	\right\}
		\text{ as } x \rightarrow \infty,
\end{align}

Furthermore, through the analysis, there is the following relationship between the Jost solutions represented by the scattering data
\begin{align}
	M(x,k)&=a(k)\bar{N}(x,k)+b(k)N(x,k), \label{2.14}\\
	N(x,k)&=\bar{a}(k)\bar{M}(x,k)+\bar{b}(k)M(x,k),\label{2.15}
\end{align}
where the functions $a(k), b(k), \bar{a}(k), \bar{b}(k)$ are all independent of variable $x$ and referred to as scattering data, possessing the following properties
\begin{align*}
	&\bar{a}(k) = a^{*}(-k), \\
	&\bar{b}(k) = -b(-k) = -\left(\frac{d\zeta_{+}}{d k}\right)\left(\frac{d\zeta_{-}}{d k}\right)^{-1} b^{*}(k), \\
	&|a(k)|^{2} -\left(\frac{d\zeta_{+}}{d k}\right)\left(\frac{d\zeta_{-}}{d k}\right)^{-1}|b(k)|^{2} = 1.
\end{align*}

By virtue of the boundary condition (\ref{2.13}) and taking the constant $\rho$ to be zero. From (\ref{lax4}), we obtain
\begin{align*}
	&a(k,t) = a(k,0), \\
	&b(k,t) = b(k,0) \exp \left[ -4ik \left( \lambda + \frac{1}{2\delta} \right) t \right], \\
	&b_{l}(t) = b_{l}(0) \exp \left[ 4\kappa_{l} \left( \lambda_{l} + \frac{1}{2\delta} \right) t \right],
\end{align*}
where $\lambda_{l} = \lambda(i\kappa_{l}) = -\kappa_{l} \cot 2\kappa_{l}\delta$, and $k_l=i\kappa_l, 0<\kappa_l<\frac{\pi}{2\delta}$ is the only column of simple zeroes of $a(k)$. Based on the above facts, the scattering datas of spectral problem (\ref{lax3}) and (\ref{lax4}) are now given by
\begin{align}
		\mathcal{S}= \{ a(k), b(k), \bar{a}(k), \bar{b}(k), \kappa_{l}, l=1,2,\cdots,N\}.
\end{align}
In particular, when $u$ is a soliton potential given by (\ref{1.16}), one has that $|a(k)|\to 1$ and
\begin{align}
	a=2\kappa_1, \quad c_1=2\lambda_1+\delta^{-1}.
\end{align}

The conservation laws for the ILW equation emerge naturally from the analytic properties of the scattering coefficient \(a(k)\). Its explicit factorization reveals the underlying structure of the conservation laws through the decomposition
\begin{align}
	a(k) = \hat{a}(k) \prod_{l=1}^{N} \frac{\zeta_{+} - \zeta_{+l}}{\zeta_{+} + \zeta_{-l}},
\end{align}
where \(\hat{a}(k)\) is a zero-free function satisfying \(\lim_{|\zeta_{+}|\to\infty} \hat{a}(k) = 1\), and \(\zeta_{\pm l} = \zeta_{\pm}(i\kappa_l)\) denote the discrete eigenvalues associated with soliton solutions.

The asymptotic expansion of the logarithmic function \(\ln a(k)\) via contour integration in the complex \(\zeta_{+}\)-plane generates the essential series representation
\begin{align}
	\ln a(k) = \frac{1}{2\delta} \sum_{n=1}^{\infty} \frac{1}{(2\zeta_{+})^n} \left[ \frac{2^n}{\pi} \int_{-\infty}^{\infty} (\zeta_{+}')^{n-1} \ln |a(k')|^2  d\zeta_{+}' + \frac{2^{n+1} i \delta}{n} \sum_{l=1}^{N} \left( \zeta_{+l}^{n} - \zeta_{+l}^{*n} \right) \right].
\end{align}
The coefficients of this expansion directly yield the infinite sequence of conserved quantities through the trace formula
\begin{align}
	I_n = -\frac{2^n \delta}{\pi} \int_{-\infty}^{\infty} \zeta_{+}^{n-1} \ln |a(k)|^2  d\zeta_{+} - \frac{2^{n+1} i \delta}{n} \sum_{l=1}^{N} \left( \zeta_{+l}^{n} - \zeta_{+l}^{*n} \right). \label{2.20}
\end{align}

 In terms of $I_n$, the conservation laws $H_n$ presented in Section1 can be expressed as follows
\begin{align}
	H_n=I_n, \quad \text{for all}~n\in \mathbb{N}.
\end{align}

The first two conserved quantities possess particularly transparent physical interpretations. For \(n=0\), the mass conservation law exhibits explicit depth-dependent scaling
\begin{align}
	H_0=\int_{-\infty}^{\infty} u  dx = 4 \delta\sum_{l=1}^{N} \kappa_l  - \frac{\delta}{\pi} \int_{-\infty}^{\infty} \ln |a(k)|^2  d\zeta_{+},
\end{align}
and for $n=1$,
\begin{align}
	H_1=\frac{1}{2}\int_{-\infty}^{\infty} u^2  dx = 4\delta \sum_{l=1}^{N} \kappa_l c_l + \frac{\delta}{\pi} \int_{-\infty}^{\infty} \zeta_{+} \ln |a(k)|^2  d\zeta_{+}
\end{align}
 will make a significant contribution to the subsequent calculations.

Moreover, similar to the KdV and BO equation case, the ILW conservation laws are in involution, namely,  $ H_{n} $($ n=0,1,2,\dots $) commute with each other in the following Poisson bracket
\begin{align*}
	\int_{-\infty}^{\infty}\left(\frac{\delta H_{n}}{\delta u}(x)\right)\bigg|_{u=U^{(m)}}\frac{\partial}{\partial x}\left(\frac{\delta H_{l}}{\delta u}(x)\right)\bigg|_{u=U^{(m)}}\text{d}x=0,\quad n,l=0,1,2,\dots
\end{align*}
Note that $ H_{0} $ is the unique Casimir function of Eq. (\ref{ILW}).

\subsection{Variational characterization of the bound state profiles}
 Recall that the Lyapunov functional $\mathcal{S}_n$ of the ILW $n$-solitons is given by
\begin{align*}
	\mathcal{S}_n(u)=H_{n+1}(u)+\sum_{m=1}^{n}\mu_m H_m(u),
\end{align*}
and $\mu_m, m=0,1,\cdots,n$ are Lagrange multipliers, then the ILW $n$-solitons satisfy the
Euler-Lagrange equation. This provides a variational characterization of $U^{(n)}$. In order to show the explicit value of the Lagrange multipliers $\mu_m$, It is necessary for us to consider components of the ILW conservation quantities  $H_m$ which related to $1$-soliton profile $Q_c$. Since (\ref{1.17}) imply that $1$-soliton with speed $c$ satisfies the following variational principle
\begin{align}
	\frac{\delta}{\delta u} \left( H_{j+1}(u) + c H_j(u) \right) = 0, \quad j \in \mathbb{Z}_+,
\end{align}
that is
\begin{align}
	\left(H_{j+1}^{\prime}(Q_c) + c H_j^{\prime}(Q_c) \right) = 0.\label{2.25}
\end{align}
 Now multiply (\ref{2.25}) with $\frac{\text{d}Q_c}{\text{d}c}$, then for each $n$, we have
 \begin{align}
 	\frac{\mathrm{d} H_{n+1}(Q_c)}{\mathrm{d} c} = (-c)^n \frac{\mathrm{d} H_1(Q_c)}{\mathrm{d} c}.
 \end{align}
 and therefore
 \begin{align}
 	H_{n+1}\left( Q_{c} \right) =(-1)^n \int_{0}^{c} y^{n} \frac{\mathrm{d} H_{1} \left( Q_{y} \right)}{\mathrm{d} y}  \mathrm{d} y
 \end{align}
 Taking account of the fact that the reflection coefficient $\ln|a(k)|$ of the trace identity (\ref{2.20}) becomes zero for $u=U^{(m)}$ with the associated
 discreet eigenvalue $i\kappa_n$ for $n=1,2, \cdots,m$ and $0<\kappa_n<\frac{\delta}{2\pi}$. In particular, we can derive from (\ref{2.20}) the formula
 \begin{align}
 	H_j(u)=- \frac{2^{j+1} i \delta}{n} \left( \zeta_{+}^{j} - \zeta_{+}^{*j} \right)\in \mathbb{R}.
 \end{align}
 More precisely, one has the following for the conservation laws $H_1$
 \begin{align}
 	H_1(Q_c)=4\delta \kappa c.
 \end{align}
 Particularly, the derivative of $H_1(Q_c)$ with respect to the wave speed
$c$ can be computed in the following form
\begin{align}\label{2.30}
	\frac{\mathrm{d} H_{1}(Q_{c})}{\mathrm{d} c} =4\delta \left(\frac{c\sin^22\kappa\delta}{4\kappa\delta-\sin 4\kappa\delta}+\kappa\right)\stackrel{\Delta}{=}\mathcal{G}^\delta_{\kappa,c}>0.
\end{align}

Base upon facts one have the follow results.
\begin{prop}
	The profiles of the ILW $n$-solitons $U^{(n)}$ satisfy (\ref{lagrange}) if the Lagrange multipliers $\mu_{m}$ are symmetric functions of the wave speeds $c_{1},c_{2},\cdots,c_{n}$ which satisfy the following
	\begin{align}
		\prod_{m=1}^{n}(x + c_{m}) = x^{n} + \sum_{m=1}^{n}\mu_{m}x^{n-m},\quad x\in \mathbb{R}.
	\end{align}
	In particular, $\mu_{m}$ are given by the following Vieta's formulas: for $k=1,\ldots,n$
	\begin{align}
		\mu_{m+1-k} = \sum_{1\leq i_{1} < \cdots < i_{k} \leq n} \biggl( \prod_{j=1}^{k}c_{i_{j}} \biggr).\label{Vieta}
	\end{align}
\end{prop}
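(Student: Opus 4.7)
The plan is to reduce (\ref{1.20}) to a polynomial identity in the recursion operator, and then to read off the Lagrange multipliers by expanding the corresponding characteristic polynomial via Vieta's formulas.

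First I would use the bi-Hamiltonian pair in (\ref{1.15}) to obtain a one-dimensional recursion on the variational gradients. Modulo constants (which are killed by the decay hypothesis on $H_m'$), the compatibility $\mathcal{J}\,H_{m+1}'(u) = \mathcal{J}^{(2)}(u)\,H_m'(u)$ yields
\begin{align*}
	H_{m+1}'(u) = \mathcal{R}^{\star}(u)\,H_m'(u), \qquad m\geq 1,
\end{align*}
where $\mathcal{R}^{\star}(u)$ denotes the one-dimensional reduction of the recursion operator from (\ref{1.15}). Since $H_1'(u) = u$, iteration gives $H_m'(u) = \mathcal{R}^{\star}(u)^{m-1}\,u$, so the Euler-Lagrange condition $\mathcal{S}_n'(U^{(n)}) = 0$ becomes the polynomial identity $P_n\bigl(\mathcal{R}^{\star}(U^{(n)})\bigr)\,U^{(n)} = 0$ with
\begin{align*}
	P_n(x) := x^{n} + \sum_{m=1}^{n}\mu_m\,x^{m-1}.
\end{align*}

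Next I would identify the spectrum of $\mathcal{R}^{\star}(U^{(n)})$ on the subspace $\mathcal{V} := \spn\{H_m'(U^{(n)}): 1\leq m\leq n\}$. Since $U^{(n)}$ is reflectionless ($|a(k)|\equiv 1$ in (\ref{2.20})), each $H_m'(U^{(n)})$ admits a finite squared-eigenfunction expansion indexed by the discrete eigenvalues $i\kappa_l$; moreover, a direct computation using (\ref{lax3})--(\ref{lax4}) together with the properties of $T^\delta$ recorded in Section \ref{sub2.1} shows that each such squared Jost function is an eigenvector of $\mathcal{R}^{\star}(U^{(n)})$ with eigenvalue $-c_l = -(2\lambda_l + \delta^{-1})$. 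Consequently $\dim\mathcal{V} = n$ and the characteristic polynomial of $\mathcal{R}^{\star}(U^{(n)})|_{\mathcal{V}}$ equals $\prod_{l=1}^{n}(x + c_l)$.

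Finally, Cayley-Hamilton applied on $\mathcal{V}$ yields $\prod_{l=1}^{n}\bigl(\mathcal{R}^{\star}(U^{(n)}) + c_l\bigr)\,U^{(n)} = 0$. Expanding the product and matching coefficients against $P_n$ identifies each $\mu_m$ with a specific elementary symmetric function of $(c_1,\dots,c_n)$, in the pattern (\ref{Vieta}), and the EL equation follows. The main obstacle is the spectral step: translating the discrete data of the Lax operator into the eigenstructure of the nonlocal operator $\mathcal{R}^{\star}$ requires a careful analysis of how $T^\delta$ acts on the squared Jost functions and of matching their asymptotics at $|x|\to\infty$; this is noticeably more delicate than the analogous computation for KdV because of the nonlocal dispersion.
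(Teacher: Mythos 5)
Your argument is logically coherent and reaches the right conclusion, but it takes a genuinely different route from the paper. The paper's proof is far more elementary: it sends $t\to\infty$, uses the soliton resolution (\ref{resolution}) to replace $U^{(n)}$ by a sum of well-separated single solitons $Q_{c_j}$, invokes the one-soliton identity $H_{k+1}'(Q_{c_j})+c_jH_k'(Q_{c_j})=0$ (a direct consequence of the profile equation (\ref{1.17})) to reduce each gradient to $(-c_j)^{m}H_1'(Q_{c_j})$, and then reads off that each $-c_j$ must be a root of $x^n+\sum_m\mu_m x^{n-m}$ because the asymptotically separated profiles $Q_{c_j}$ are linearly independent. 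Your route instead stays at fixed $t$, iterates the recursion relation $H_{m+1}'=\mathcal{R}(u)H_m'$ to turn the Euler--Lagrange equation into a polynomial in the recursion operator, and closes the argument by Cayley--Hamilton on the $n$-dimensional invariant subspace $\mathcal{V}=\spn\{H_m'(U^{(n)})\}$. What your approach buys is that it never appeals to the long-time decomposition; what it costs is that the entire weight falls on the spectral claim that the discrete spectrum of $\mathcal{R}(U^{(n)})$ on $\mathcal{V}$ is exactly $\{-c_1,\dots,-c_n\}$ with squared-Jost eigenfunctions, together with the nondegeneracy needed for $\dim\mathcal{V}=n$. The paper proves this only for the one-soliton case (Lemma \ref{lem3.2}) and merely sketches the $n$-soliton version in a remark after Lemma \ref{lem3.5}, so in the context of this paper that step is a real outstanding dependency rather than a citation --- you correctly flag it as the main obstacle, but as written it is asserted, not established.

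Two smaller points. First, you have the roles of $\mathcal{R}$ and $\mathcal{R}^{\star}$ reversed relative to the paper's conventions: by (\ref{2.39}) it is $\mathcal{R}(u)=(\mathcal{J}^{(1)})^{-1}\mathcal{J}^{(2)}$ that propagates variational gradients, while $\mathcal{R}^{\star}(u)$ propagates symmetries such as $u_x$; the computation is unaffected but the notation should be fixed. Second, your coefficient matching yields $\mu_{n+1-k}=e_k(c_1,\dots,c_n)$, which is the intended content of (\ref{Vieta}) (the subscript $m+1-k$ there appears to be a typo for $n+1-k$), so no discrepancy on the final formula.
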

\begin{proof}
With the Euler-Lagrange equation (\ref{1.20}) and taking $t \to \infty$, the asymptotic decomposition yields
	\begin{align}
		\lim_{t\to\infty} \sum_{j=1}^n \left[ \frac{\delta H_{n+1}}{\delta u}(Q_{c_j}) + \sum_{m=1}^n \mu_m \frac{\delta H_m}{\delta u}(Q_{c_j}) \right] = 0.
	\end{align}
Hence, for each single soliton $Q_c$, the recurrence relation holds
	\begin{align}
		\frac{\delta H_{k+1}}{\delta u}(Q_c) + c \frac{\delta H_k}{\delta u}(Q_c) = 0, \quad \forall k \geq 1.
	\end{align}
Therefore, we have the following reduction
	\begin{align}
		\frac{\delta H_{m+1}}{\delta u}(Q_c) = (-c)^m \frac{\delta H_1}{\delta u}(Q_c).
	\end{align}
	This non-degeneracy condition ensures that the subsequent derivation is well-defined and non-trivial. Therefore we obtain the fundamental equation system
	\begin{align}
		\sum_{j=1}^n \left[ (-c_j)^n + \sum_{m=1}^n \mu_m (-c_j)^{m-1} \right] Q_{c_j} = 0,
	\end{align}
it implies that for each $j$, we have
	\begin{align}
		(-c_j)^n + \sum_{m=1}^n \mu_m (-c_j)^{m-1} = 0.
	\end{align}
which implies that $-c_j$ are the roots of the polynomial $ x^{n} + \sum_{m=1}^{n}\mu_{m}x^{n-m}$. Since $0<c_1<c_2<\cdots<c_n$, we
obtain(\ref{Vieta}) from Vieta's formula immediately.
\end{proof}
\subsection{Bi-Hamiltonian formation}
The Hamiltonian formulation of the ILW equation, derived from (\ref{1.14}), enables the construction of a recurrence operator characterizing conserved quantities. Consider conservation laws $H_m(u): H^{\frac{m-1}{2}}(\mathbb{R}) \to \mathbb{R}$ where $m \in \mathbb{N}$. The recurrence operator $\mathcal{R}(u)$ satisfies the variational relation
\begin{align}
	\frac{\delta H_{m+1}(u)}{\delta u} &= \mathcal{R}(u)\frac{\delta H_m(u)}{\delta u},\label{2.39}
\end{align}
Differing from integrable systems like KdV, the recursion operator $\mathcal{R}(u)$ remains implicitly defined through (\ref{1.15}). Its adjoint $\mathcal{R}^\star(u)$ conforms to the duality condition
\begin{align}
	\mathcal{R}^\star(u) &= \mathcal{J}\mathcal{R} (u) \mathcal{J}^{-1},\label{2.40}
\end{align}
and it is not difficult to see that the operators $\mathcal{R} (u)$ and $\mathcal{R}^\star(u)$ satisfy
\begin{align}
	\mathcal{R}^\star(u) \mathcal{J} &= \mathcal{J}\mathcal{R} (u).\label{2.41}
\end{align}
The above definitions of recursion operators are reasonable since \( \mathcal{R}(u) \) maps the variational derivative of conservation laws of (\ref{ILW}) onto the variational derivative of conservation laws, and \( \mathcal{R}^{\star}(u) \) maps infinitesimal generators of symmetries of (\ref{ILW}) onto infinitesimal generators of symmetries of (\ref{ILW}) is \( u_{x} \) \cite{S1,Sa}, therefore (\ref{2.40}) is well-defined since

\begin{align*}
	(\mathcal{R}^\star(u))^m u_x &= \mathcal{J} (\mathcal{R} (u))^m H_1'(u) = \mathcal{J} (\mathcal{R} (u))^m u, \quad m \in \mathbb{N}.
\end{align*}
For subsequent applications, establishing the uniqueness and differentiability of $\mathcal{R}(u)$ becomes essential. In contrast, the Korteweg-de Vries system ($\delta = 3$) exhibits explicit structure. For Schwartz-class functions $u \in \mathcal{S}(\mathbb{R})$, its recursion operator is $\mathcal{R}_K(u) = -\partial_x^2 - \tfrac{2}{3}u - \tfrac{2}{3}\partial_x^{-1} u \partial_x$, then $\mathcal{R}_K'(u) = -\tfrac{2}{3} - \tfrac{2}{3}\partial_x^{-1}(\cdot\partial_x)$.
\begin{prop}\label{prop2.2}
	Given $u \in H^{k+1}(\mathbb{R})$ with $k \geq 0$, there exists a unique linear operator
	\[
	\mathcal{R}(u): H^{k+1}(\mathbb{R}) \to H^{k}(\mathbb{R}),
	\]
	such that relations (\ref{2.39}) and (\ref{2.41}) hold true. Moreover, $\mathcal{R}(u)$ is differentiable with respect to $u$.
\end{prop}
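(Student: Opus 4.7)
The plan is to first extract an explicit one-dimensional form for $\mathcal{R}(u)$ from the two-dimensional bi-Hamiltonian operators (\ref{1.15}), and then verify the mapping property, uniqueness, and differentiability by treating the resulting expression as a pseudo-differential perturbation of a $u$-independent principal part. To derive the explicit form, I would restrict the two-dimensional recursion operator $\mathcal{R}_{12} = (\mathcal{J}_{12}^{(1)})^{-1}\mathcal{J}_{12}^{(2)}$ to the diagonal $x_1 = x_2$ via the delta-distribution testing that appears in (\ref{1.14}). Using the decomposition $\theta_{12}^{\pm} = (u_1 \pm u_2) + i(\partial_{x_1} \mp \partial_{x_2})$ together with the reduction rule from Subsection \ref{sub2.1} that $T^\delta_{12}$ acts as $T^\delta$ on functions of a single spatial variable, this yields a pseudo-differential operator on $\mathbb{R}$ with principal part $-T^\delta\partial_x$, which matches the leading dispersion of the hierarchy, plus lower-order terms that are affine in $u$ and involve $u$, $T^\delta u$, and the antiderivative $\partial_x^{-1}$ coming from the inversion of $\theta_{12}^-$. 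The adjoint relation (\ref{2.41}) then follows formally from the skew-adjointness of $\mathcal{J} = \partial_x$ and of $T^\delta$ recorded in Subsection \ref{sub2.1}.

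For the mapping property, I would estimate each term of this explicit form separately. The Fourier symbol $-2\pi\xi\coth(2\pi\delta\xi)$ of $T^\delta\partial_x$ grows like $|\xi|$, so this term loses exactly one derivative and is bounded $H^{k+1}\to H^k$. For the lower-order contributions, since $u\in H^{k+1}(\mathbb{R}) \hookrightarrow L^{\infty}(\mathbb{R})$ when $k\geq 0$, the Sobolev algebra property yields boundedness of multiplication by $u$ and by $T^\delta u$ on $H^k$ with norm controlled by $\|u\|_{H^{k+1}}$; the antiderivative $\partial_x^{-1}$ appears only in combinations of the form $\partial_x^{-1}(u_x\,\cdot\,)$ or $\partial_x^{-1}(u\,\partial_x\cdot)$, which remain bounded on $L^2$-based Sobolev spaces after integration by parts. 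Summing gives $\|\mathcal{R}(u)f\|_{H^k} \lesssim (1+\|u\|_{H^{k+1}})\|f\|_{H^{k+1}}$. For uniqueness, if another operator $\widetilde{\mathcal{R}}(u)$ satisfies (\ref{2.39}) and (\ref{2.41}), then applying (\ref{2.39}) for $m=1,2,3$ successively pins down the principal symbol and the lower-order terms of the difference, while (\ref{2.41}) removes any remaining freedom in its commutator structure, forcing the difference to vanish as a bounded operator $H^{k+1}\to H^k$. Differentiability of $u\mapsto \mathcal{R}(u)$ is then immediate, since the explicit expression is affine in $u$, so $\mathcal{R}'(u)$ is a constant-in-$u$ bounded linear map.

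The main obstacle is the first step: rigorously deriving the explicit one-dimensional form of $\mathcal{R}(u)$ from its two-dimensional expression. The inverse $(\mathcal{J}_{12}^{(1)})^{-1} = (\theta_{12}^-)^{-1}$ inverts a first-order non-local differential operator on $\mathbb{R}^2$ and is only well-defined once one specifies boundary conditions and a suitable function class. One must verify that the ambiguity drops out after restriction to the diagonal through the holomorphic projections $f_{12}^{(\pm)}$ from Subsection \ref{sub2.1}, and that the resulting pseudo-differential operator genuinely maps between the claimed Sobolev spaces. Once this algebraic reduction is carried out, the remaining steps reduce to standard harmonic-analytic estimates and symbol-level bookkeeping.
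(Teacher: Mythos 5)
Your proposal follows essentially the same route as the paper's own proof: both pass through the two-dimensional recursion operator $\mathcal{R}_{12}$ from (\ref{1.15}) and the hierarchy formula (\ref{1.14}), identify the principal part $-T^\delta\partial_x$, treat the $u$-dependent remainder as a lower-order differentiable perturbation, and obtain uniqueness from the relation (\ref{2.39}) by induction over $m$. The diagonal-reduction step you flag as the main obstacle is also left implicit in the paper, whose argument simply asserts $\mathcal{R}(u)\sim -T^\delta\partial_x + L(u)$ and invokes density, so your more explicit Sobolev bookkeeping is a refinement of the same approach rather than a departure from it.
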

\begin{proof}
	The fundamental approach involves establishing a connection between the recursion operators $\mathcal{R}(u)$ and $\mathcal{R}_{12}$ (\ref{1.15}). Assuming $u \in \mathcal{S}(\mathbb{R})$, it follows from (\ref{1.14}) and (\ref{2.39}) that
	\begin{align*}
		\mathcal{S}(\mathbb{R}) \ni H_{m+1}^{\prime}(u) & = \frac{i}{2^{m+1}}\mathcal{J}^{-1}\int_{\mathbb{R}} \delta(x_{1} - x_{2}) \theta_{12}^{-} \mathcal{R}_{12}^{m+1} \cdot 1  \mathrm{d}x_{2} \\
		& = \mathcal{R}(u)H_{m}^{\prime}(u) \\
		& = \mathcal{R}(u) \frac{i}{2^m}\mathcal{J}^{-1}\int_{\mathbb{R}} \delta(x_{1} - x_{2}) \theta_{12}^{-} \mathcal{R}_{12}^{m} \cdot 1  \mathrm{d}x_{2}.
	\end{align*}
	The uniqueness of $\mathcal{R}(u)$ follows by an induction argument over $m$. Furthermore, the asymptotic behavior of the operator is characterized by
	\[
	\mathcal{R}(u) \sim -T^\delta\partial_{x} + L(u),
	\]
	where $L(u): \mathcal{S}(\mathbb{R}) \mapsto \mathcal{S}(\mathbb{R})$ represents a differentiable higher-order remainder term. By standard density argument, $\mathcal{R}(u)$ itself is differentiable and satisfies $\mathcal{R}^{\prime}(u) \sim L^{\prime}(u)$.
\end{proof}
\begin{re}
The propositions above will be shown in Section \ref{sec.3} that understanding the spectral information of the (adjoint) recursion operators $\mathcal{R} (u)$ and $\mathcal{R}^\star(u)$ is essential in proving the (spectral) stability of the ILW multi-solitons.
\end{re}

We first observe that the differential equation (\ref{1.17}) verified by the soliton profile and the bi-Hamiltonian structure (\ref{2.39}) imply that the 1-soliton $Q_{c}(x-ct-x_{0})$ with speed $c>0$ satisfies, for all $m\geq 2$ and for any $t\in\mathbb{R}$, the following variational principle

\begin{align}\label{recursion}
	H_{m+1}^{\prime}(Q_{c}) + cH_{m}^{\prime}(Q_{c}) = \cdots = \mathcal{R}^{m-1}(Q_{c})(H_{2}^{\prime}(Q_{c}) + cH_{1}^{\prime}(Q_{c})) = 0,
\end{align}
and it holds true since the functions $H_{m}^{\prime}(Q_{c}) + cH_{m-1}^{\prime}(Q_{c}) \in H^{1}(\mathbb{R})$ belong to the domain of $\mathcal{R}(Q_{c})$.

Next, let us recall that the soliton $Q_{c}(x-ct-x_{0})$ (\ref{1.16}) is a solution of the ILW equation. For simplicity, we denote $Q_{c}$ by $Q$. Then by (\ref{2.39}), we have
\begin{align}
	H'_{m+1}(Q) &= \mathcal{R}(Q)H'_{m}(Q).
\end{align}

To analyze the second variation of the actions, we linearize the equation (\ref{2.39}) by letting $u = Q + \varepsilon z$, and obtain a relation between linearized operators $H''_{m+1}(Q) + cH''_{m}(Q)$ and $H''_{m}(Q) + cH''_{m-1}(Q)$ for all $m \geq 2$.

\begin{prop}\label{prop2.3}
	Suppose that $Q$ is a soliton profile of the BO equation with speed $c > 0$. If $z \in H^{m}(\mathbb{R})$ for $n \geq 1$, then there holds the following iterative operator identity
	\begin{align}
		(H''_{m+1}(Q) + cH''_{m}(Q))z &= \mathcal{R}(Q)(H''_{m}(Q) + cH''_{m-1}(Q))z.\label{2.43}
	\end{align}
\end{prop}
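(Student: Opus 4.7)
The plan is to linearize the fundamental recursion identity $H'_{m+1}(u) = \mathcal{R}(u) H'_m(u)$ from (\ref{2.39}) around the soliton profile $Q$, and then use the variational principle (\ref{recursion}) to cancel the Fréchet derivative contribution of $\mathcal{R}$ at $Q$. The differentiability of $\mathcal{R}(u)$ with respect to $u$, ensured by Proposition \ref{prop2.2}, makes this linearization rigorous on the relevant Sobolev scale.

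First I would substitute $u = Q + \varepsilon z$ with $z \in H^{m}(\mathbb{R})$ into (\ref{2.39}) and differentiate both sides in $\varepsilon$ at $\varepsilon=0$. Using the chain rule together with Proposition \ref{prop2.2} (which guarantees $\mathcal{R}'(Q)[\cdot]$ is a well-defined linear map), this yields the identity
\begin{align*}
H''_{m+1}(Q)z = \mathcal{R}'(Q)[z]\,H'_{m}(Q) + \mathcal{R}(Q)\,H''_{m}(Q)z.
\end{align*}
Applying the same linearization to the shifted recursion $H'_{m}(u) = \mathcal{R}(u) H'_{m-1}(u)$ produces
\begin{align*}
H''_{m}(Q)z = \mathcal{R}'(Q)[z]\,H'_{m-1}(Q) + \mathcal{R}(Q)\,H''_{m-1}(Q)z.
\end{align*}

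Next I would multiply the second relation by $c$ and add it to the first. Collecting terms, the Fréchet derivative contribution becomes $\mathcal{R}'(Q)[z]\bigl(H'_{m}(Q) + c H'_{m-1}(Q)\bigr)$. By the variational principle (\ref{recursion}) applied at $Q=Q_c$, the parenthesized factor vanishes identically, so the term involving $\mathcal{R}'(Q)$ drops out entirely. What remains is precisely the desired operator identity
\begin{align*}
(H''_{m+1}(Q) + cH''_{m}(Q))z = \mathcal{R}(Q)(H''_{m}(Q) + cH''_{m-1}(Q))z.
\end{align*}

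The main obstacle is not the algebra, which is a direct calculus-of-variations manipulation, but rather the functional-analytic justification at each step: one must verify that $H''_{m}(Q)z$ and $H''_{m-1}(Q)z$ lie in the domain of $\mathcal{R}(Q)$ (so that $\mathcal{R}(Q)$ can act on them), and that $\mathcal{R}'(Q)[z]$ is well-defined as an operator acting on $H'_{m}(Q)$ and $H'_{m-1}(Q)$. The regularity statement of Proposition \ref{prop2.2}, together with the smoothness and rapid algebraic decay of the soliton profile $Q_c$, supplies exactly these ingredients, with the hypothesis $z \in H^{m}(\mathbb{R})$ ensuring the action of the $m$-th variation is meaningful. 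A brief density argument (approximating $z$ by Schwartz functions) could be invoked if needed to legitimize the formal differentiation in $\varepsilon$.
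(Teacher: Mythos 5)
Your proposal is correct and follows essentially the same route as the paper: linearize the recursion identity (\ref{2.39}) at $u=Q+\varepsilon z$ to get $H''_{m+1}(Q)z = \mathcal{R}(Q)(H''_{m}(Q)z) + (\mathcal{R}'(Q)z)(H'_{m}(Q))$, combine the $m$ and $m-1$ instances with weight $c$, and cancel the $\mathcal{R}'(Q)z$ contribution via the variational principle (\ref{recursion}), with Proposition \ref{prop2.2} supplying the differentiability of $\mathcal{R}(u)$. The paper's proof is exactly this computation, stated more tersely.
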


\begin{proof}
	Let $u = Q + \varepsilon z$. By (\ref{2.39}) and the definition of G$\hat{a}$teaux derivative, we have
	\begin{align}
		H''_{m+1}(Q)z &= \mathcal{R}(Q)(H''_{m}(Q)z) + (\mathcal{R}'(Q)z)(H'_{m}(Q)). \label{2.44}
	\end{align}
	Then by (\ref{2.44}),
	\begin{align*}
		(H''_{m+1}(Q) + cH''_{m}(Q))z
		&= \mathcal{R}(Q)\left((H''_{m}(Q) + cH''_{m-1}(Q))z\right) \\
		&\quad + (\mathcal{R}'(Q)z)(H'_{m}(Q) + cH'_{m-1}(Q))
	\end{align*}
	Notice that from Proposition \ref{prop2.2}, $\mathcal{R}'(Q)$ is well-defined. Then (\ref{2.43}) follows directly from (\ref{recursion}).
\end{proof}
\section{Spectral analysis}\label{sec.3}
Let $U^{(n)}(t,x)$ denote the $n$-soliton solution of the ILW equation and set $U^{(n)}(x)=U^{(n)}(0,x)$. We use subscripts $od$ and $ev$ for spaces of odd and even functions, respectively. This section presents a spectral analysis of the linearized operator $\mathcal{L}_n$ around $U^{(n)}$, defined formally in (\ref{VL}). The analysis uses the recursion operators and their adjoints from Section \ref{sec.2}.

We study the spectrum of $\mathcal{L}_n$ via the iso-inertial operator family approach, following \cite{LW,MS, WL,WTLW,ZL}. The spectral structure is derived from two main observations.
First, a form of iso-spectral property holds: the inertia of $\mathcal{L}_n$ (the number of negative eigenvalues and the kernel dimension) is invariant under the time evolution of $U^{(n)}(t,x)$.

Second, for large times, $\mathcal{L}_n(t)$ effectively decouples into a composition of linearized operators around each individual soliton, because the solitons separate due to different speeds. Hence, the spectrum of $\mathcal{L}_n(t)$ converges to the union of the spectra of the single-soliton linearized operators.

Moreover, the $n$-soliton solution $U^{(n)}(t,x)$ decomposes into $n$ well-separated single-soliton solutions $Q_{c_j}(x-c_j t - x_j)$ as described in equation (\ref{resolution}). Consequently, as $t \to \infty$, the spectrum $\sigma(\mathcal{L}_n(t))$ converges to the union of the spectra of the operators $L_{n,j} := H^{\prime\prime}(Q_{c_j})$, that is
\[
\lim_{t \to \infty} \sigma(\mathcal{L}_n(t)) = \bigcup_{j=1}^n \sigma(L_{n,j}).
\]

Hence, in this section we demonstrate a significant result concerning the spectral properties of the linearized operator. Specifically, we show that the inertia of $\mathcal{L}_n$ related to the $n$-soliton solution $U^{(n)}$ consists of exactly $\left\lfloor \frac{n+1}{2} \right\rfloor$ negative eigenvalues, while the dimension of its null space is precisely $n$. Formally, we express this as
\[
\text{in}(\mathcal{L}_n(t)) = \left( \left\lfloor \frac{n+1}{2} \right\rfloor, n \right).
\]
This fundamental result is derived from the inertia properties of the operators $L_{n,j}:=I''_n(Q_{c_j})$, which exhibit the following characteristic behaviors depending on the index parity.
\begin{itemize}
	\item[(i)] For \textbf{odd indices} $j = 2k - 1$: $\text{in}(L_{n,j}) = (1,1)$, signifying that $L_{n,2k-1}$ has exactly one negative eigenvalue.
	\item[(ii)] For \textbf{even indices} $j = 2k$: $\text{in}(L_{n,j}) = (0,1)$, indicating that $L_{n,2k}$ is non-negative.
\end{itemize}

Considering the explicit form of $L_{n,j}$, we recognize it as the summation
\[
L_{n,j} = \sum_{m=1}^n \left( H_{m+1}^{\prime\prime}(Q_{c_j}) + c_j H_m^{\prime\prime}(Q_{c_j}) \right).
\]
Remarkably, as established in Proposition \ref{prop2.3}, this operator admits an elegant factorization
\begin{align}\label{3.1}
	L_{n,j} = \left( \prod_{\substack{k=1 \\ k \neq j}}^n (\mathcal{R}(Q_{c_j}) + c_k) \right) \left( H_2^{\prime\prime}(Q_{c_j}) + c_j H_1^{\prime\prime}(Q_{c_j}) \right) .
\end{align}
Here the coefficients $\sigma_{j,k}$ represent the elementary symmetric functions of the wave speeds $c_1, \dots, c_{j-1}, c_{j+1}, \dots, c_n$.
This factorization reveals the intricate mathematical structure underlying the spectral properties of multi-soliton solutions and provides a powerful tool for analyzing their stability characteristics.

To obtain the spectrum of the operator $L_{n,j}$ (\ref{3.1}), let us consider the spectral analysis of the linearized Hamiltonian
\begin{align}
	L_{m} &:= H_{m+1}^{\prime\prime}(Q) + cH_{m}^{\prime\prime}(Q),\label{3.2}
\end{align}
for all integers $m\geq 1$.  Here we write for simplicity $Q_c$ by $Q$ in the rest of this section. One of crucial ingredients to deal with this spectrum problem is the following operator identities related to the recursion operator $\mathcal{R}(Q)$ and the adjoint recursion operator $\mathcal{R}^{\star}(Q)$ (see (\ref{2.40})).
\begin{lemma}\label{lem3.1}
	The recursion operator $\mathcal{R}(Q)$, the adjoint recursion operator $\mathcal{R}^{\star}(Q)$ and the linearized operator $L_{m}$ for all integers $m \geq 1$ satisfy the following operator identities
	
	\begin{align}
		L_{m}\mathcal{J}\mathcal{R}(Q) &= \mathcal{R}(Q)L_{m}\mathcal{J}, \label{3.3} \\
		\mathcal{J}L_{m}\mathcal{R}^{\star}(Q) &= \mathcal{R}^{\star}(Q)\mathcal{J}L_{m},\label{3.4}
	\end{align}
	where $\mathcal{J}$ is the operator $\partial_{x}$.
\end{lemma}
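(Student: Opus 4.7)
The two identities (\ref{3.3}) and (\ref{3.4}) are equivalent via the commutation relation (\ref{2.41}): substituting $\mathcal{J}\mathcal{R}(Q) = \mathcal{R}^{\star}(Q)\mathcal{J}$ turns either identity into the other. My strategy is therefore to establish the single intertwining relation
\begin{equation*}
	\mathcal{R}(Q)\, L_m \;=\; L_m\, \mathcal{R}^{\star}(Q), \qquad m \geq 1,
\end{equation*}
from which (\ref{3.3}) follows as $L_m \mathcal{J} \mathcal{R}(Q) = L_m \mathcal{R}^{\star}(Q)\mathcal{J} = \mathcal{R}(Q) L_m \mathcal{J}$, and (\ref{3.4}) follows symmetrically as $\mathcal{J} L_m \mathcal{R}^{\star}(Q) = \mathcal{J} \mathcal{R}(Q) L_m = \mathcal{R}^{\star}(Q) \mathcal{J} L_m$, in each case using (\ref{2.41}) once.

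The key step is to produce an adjoint version of Proposition \ref{prop2.3}. Starting from the factorization $L_m = \mathcal{R}(Q) L_{m-1}$ given by that proposition (valid for $m \geq 2$), I take the $L^2$-adjoint of both sides. The Hessians $L_m$ and $L_{m-1}$ are self-adjoint, being second variations of the real-valued functionals $H_{m+1}+cH_m$ and $H_m + cH_{m-1}$ at the smooth profile $Q = Q_c$; moreover, $(\mathcal{R}(Q))^{\star}$ coincides with the operator $\mathcal{R}^{\star}(Q)$ appearing in (\ref{2.40})--(\ref{2.41}), as one can verify from the bi-Hamiltonian formulation in which both $\mathcal{J}^{(1)} = \mathcal{J}$ and $\mathcal{J}^{(2)}$ are skew-adjoint so that $\mathcal{R}^{\star} = \mathcal{J}^{(2)}(\mathcal{J}^{(1)})^{-1} = \mathcal{J}\mathcal{R}\mathcal{J}^{-1}$ indeed equals the $L^2$-adjoint of $\mathcal{R}$. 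Adjointing $L_m = \mathcal{R}(Q) L_{m-1}$ therefore yields the right-hand recursion $L_m = L_{m-1} \mathcal{R}^{\star}(Q)$. Comparing this with the original form of Proposition \ref{prop2.3} gives $\mathcal{R}(Q) L_{m-1} = L_{m-1} \mathcal{R}^{\star}(Q)$ for every $m \geq 2$, which, after re-indexing $k = m-1$, is precisely the intertwining relation displayed above.

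The main technical obstacle is the bookkeeping of function spaces when taking adjoints and composing operators: by Proposition \ref{prop2.2} the recursion operator $\mathcal{R}(Q)$ loses one derivative, and $L_m$ loses roughly $m$ derivatives, so the compositions $L_m \mathcal{R}^{\star}(Q)$ and $\mathcal{R}(Q) L_m$ have to be interpreted on matching Sobolev scales. Since $Q$ is smooth and rapidly decaying and every operator in play is pseudodifferential with smooth coefficients, the manipulations can first be carried out on $\mathcal{S}(\mathbb{R})$ and then extended by density, so no essential analytic difficulty arises beyond careful specification of domains. An alternative route that avoids any explicit adjoint computation is to redo the linearization argument of Proposition \ref{prop2.3} starting from the Magri-type relation $\mathcal{J} H'_{m+1}(u) = \mathcal{J}^{(2)}(u) H'_m(u)$ and multiplying on the right by $\mathcal{R}^{\star}(Q)$; both routes lead to the same intertwining identity and hence to (\ref{3.3}) and (\ref{3.4}).
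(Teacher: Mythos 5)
Your proposal is correct and follows essentially the same route as the paper: both arguments take the adjoint of the factorization $L_{m+1}=\mathcal{R}(Q)L_{m}$ from Proposition \ref{prop2.3}, use self-adjointness of the Hessians together with $(\mathcal{R}(Q))^{\star}=\mathcal{R}^{\star}(Q)$ to obtain the intertwining relation $\mathcal{R}(Q)L_{m}=L_{m}\mathcal{R}^{\star}(Q)$, and then apply (\ref{2.41}) to conclude. The only cosmetic difference is that the paper derives (\ref{3.4}) first and gets (\ref{3.3}) by adjointing it, whereas you deduce both directly from the intertwining relation.
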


\begin{proof}
	We need only to prove (\ref{3.4}), since one takes the adjoint operation on (\ref{3.4}) to have (\ref{3.3}). Notice that from Proposition \ref{prop2.3}, one has that the operator $\mathcal{R}(Q)L_{m}=L_{m+1}$ is self-adjoint. This implies that
	\begin{align*}
		(\mathcal{R}(Q)L_{m})^{\star} &= \mathcal{R}(Q)L_{m} = L_{m}\mathcal{R}^{\star}(Q).
	\end{align*}
	On the other hand, in view of (\ref{2.41}), one has
	\begin{align*}
		\mathcal{J}L_{m}\mathcal{R}^{\star}(Q) &= \mathcal{J}\mathcal{R}(Q)L_{m} = \mathcal{R}^{\star}(Q)\mathcal{J}L_{m},
	\end{align*}
	as the advertised result in the lemma.
\end{proof}

\begin{re}
	Types of (\ref{3.3}) and (\ref{3.4}) hold for any solutions of the ILW equation.
\end{re}
In particular, we have the following results regarding ILW $n$-soliton profile $U^{(n)}$.	
\begin{coro}
	let $U^{(n)}$ be the ILW $n$-soliton profile and $\mathcal{L}_{n}$ be the second variation operator defined in (\ref{VL}). Then it is easy to verify that (similar to Lemma \ref{lem3.1}) the following operator identities hold true
\begin{align}
	\mathcal{L}_{n}\mathcal{J}\mathcal{R}(U^{(n)}) &= \mathcal{R}(U^{(n)})\mathcal{L}_{n}\mathcal{J}, \label{3.5}  \\
	\mathcal{J}\mathcal{L}_{n}\mathcal{R}^{\star}(U^{(n)}) &= \mathcal{R}^{\star}(U^{(n)})\mathcal{J}\mathcal{L}_{n}. \label{3.6}
\end{align}
\end{coro}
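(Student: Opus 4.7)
The plan is to mimic the proof of Lemma \ref{lem3.1}, substituting the single-soliton variational principle (\ref{recursion}) with the Euler--Lagrange equation (\ref{1.20}) obeyed by $U^{(n)}$. The starting point is the Gateaux derivative of the bi-Hamiltonian recursion $H'_{m+1}(u) = \mathcal{R}(u)H'_m(u)$ in the variable $u$, which by Proposition \ref{prop2.2} yields, for every admissible test direction $z$, the identity $H''_{m+1}(u)z = \mathcal{R}(u)H''_m(u)z + (\mathcal{R}'(u)z)H'_m(u)$. This is the same differentiated recursion that powered Proposition \ref{prop2.3}, and it is the only nontrivial input.

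Next I would specialize at $u = U^{(n)}$ and apply this identity termwise to
\begin{align*}
\mathcal{R}(U^{(n)})\mathcal{L}_n z = \mathcal{R}(U^{(n)})\Bigl[H''_{n+1}(U^{(n)})z + \sum_{m=1}^{n}\mu_m H''_m(U^{(n)})z\Bigr].
\end{align*}
A direct regrouping produces
\begin{align*}
\mathcal{R}(U^{(n)})\mathcal{L}_n z = H''_{n+2}(U^{(n)})z + \sum_{m=1}^{n}\mu_m H''_{m+1}(U^{(n)})z - (\mathcal{R}'(U^{(n)})z)\Bigl[H'_{n+1}(U^{(n)}) + \sum_{m=1}^{n}\mu_m H'_m(U^{(n)})\Bigr],
\end{align*}
whose final bracket vanishes identically by the Euler--Lagrange equation (\ref{1.20}). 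The remaining terms form a finite linear combination of the self-adjoint second variations $H''_k(U^{(n)})$, so $\mathcal{R}(U^{(n)})\mathcal{L}_n$ is itself self-adjoint. Taking adjoints and using the self-adjointness of $\mathcal{L}_n$ delivers the intertwining relation $\mathcal{R}(U^{(n)})\mathcal{L}_n = \mathcal{L}_n \mathcal{R}^{\star}(U^{(n)})$.

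To finish, I would compose this intertwining on the left with $\mathcal{J}$ and invoke the operator duality (\ref{2.41}), specialized as $\mathcal{J}\mathcal{R}(U^{(n)}) = \mathcal{R}^{\star}(U^{(n)})\mathcal{J}$, to obtain
\begin{align*}
\mathcal{J}\mathcal{L}_n\mathcal{R}^{\star}(U^{(n)}) = \mathcal{J}\mathcal{R}(U^{(n)})\mathcal{L}_n = \mathcal{R}^{\star}(U^{(n)})\mathcal{J}\mathcal{L}_n,
\end{align*}
which is (\ref{3.6}); identity (\ref{3.5}) then follows by formal adjunction. The main technical obstacle I anticipate lies in ensuring that $(\mathcal{R}'(U^{(n)})z)$ makes sense on the Sobolev scale where $\mathcal{L}_n$ naturally acts and that the differentiation of the recursion can be rigorously performed at $U^{(n)}$ rather than at a single soliton profile; both points are handled by the differentiability and density arguments already built into Proposition \ref{prop2.2}. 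Once the Euler--Lagrange cancellation is isolated, the remainder of the argument is a verbatim repetition of the proof of Lemma \ref{lem3.1}.
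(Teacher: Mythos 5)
Your proposal is correct and follows essentially the route the paper intends: the corollary is stated as "easy to verify, similar to Lemma \ref{lem3.1}", and you reproduce exactly that argument, with the Euler--Lagrange equation (\ref{1.20}) playing the role that the single-soliton variational identity (\ref{recursion}) played in Proposition \ref{prop2.3}, so that $\mathcal{R}(U^{(n)})\mathcal{L}_n$ is again a linear combination of self-adjoint second variations and the adjunction plus (\ref{2.41}) steps go through verbatim. In fact you supply the one detail the paper leaves implicit, namely the cancellation of the $(\mathcal{R}'(U^{(n)})z)$ remainder via (\ref{1.20}).
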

Since $\mathcal{R}(Q)$ ($\mathcal{R}^*(Q)$) commutes with $L_m\mathcal{J}$ ($\mathcal{J}L_m$) by the factorizations (\ref{3.3}) and (\ref{3.4}), the operators $\mathcal{J}L_m$ and $\mathcal{R}^*(Q)$ share the same eigenfunctions, as do $L_m\mathcal{J}$ and $\mathcal{R}(Q)$. Their eigenvalues can be determined by analyzing the asymptotic behavior of these eigenfunctions.

We analyze the spectrum of $L_m$ as follows. First, we determine the spectrum of $\mathcal{J}L_m$, which is simpler than that of $L_m$ by relating it via (\ref{3.3}) to the spectrum of $\mathcal{R}^*(Q)$. We then show that the eigenfunctions of $\mathcal{R}^*(Q)$ (or $\mathcal{J}L_m$), together with a generalized kernel of $\mathcal{J}L_m$, form an orthogonal basis in $L^2(\mathbb{R})$ (a completeness relation). Finally, decomposing a function $z$ in this basis allows us to compute the quadratic form $\langle L_m z, z \rangle$ and directly obtain the inertia of $L_m$.
\subsection{The spectrum of the recursion operator around one soliton}
We firstly define the following squared eigenfunctions
\begin{align}
	F^+(x_1,x_2;k):=N(x_{1},k)\bar{N}^{*}(x_{2},k),\quad F^-(x_1,x_2;k):=\bar{N}^{*}(x_{1},k)N(x_{2},k),
\end{align}
and
\begin{align}
	F^{\pm}_n(x_1,x_2):=F^{\pm}(x_1,x_2,i\kappa_n),
\end{align}
where $\bar{N}^{*}(x_{2})$ and $\Phi_{1}^{*}(x_{2})$ satisfy the adjoint eigenvalue problem of (\ref{lax3}) for $u=Q$ (i.e., substituting $-i$ and $x_{2}$ for $i$ and $x$).

By employing the properties of the squared eigenfunctions associated with the eigenvalue problem (\ref{lax3}), the following result can be established.
\begin{lemma}\label{lem3.2}
	The recursion operator $\mathcal{R}(Q)$ defined in $L^{2}(\mathbb{R})$ with domain $H^{1}(\mathbb{R})$ possesses exactly one discrete eigenvalue $-c$ corresponding to the eigenfunction $Q$. Its essential spectrum is the interval $[0, +\infty)$, where the associated eigenfunctions lack spatial decay and are not in $L^{2}(\mathbb{R})$. Furthermore, the kernel of $\mathcal{R}$ is spanned by $F^+(x,0) =  (N\bar{N}^{*})(x,0)=|N(x,0)|^{2}$, with $N(x,\lambda)$ and $\bar{N}(x,\lambda)$ are Jost solutions to the spectral problem (\ref{lax3}).
\end{lemma}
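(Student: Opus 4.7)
The plan is to leverage the Lax-pair integrability of the ILW equation via the classical squared-eigenfunction mechanism, which turns the spectral problem for $\mathcal{R}(Q)$ into the (explicitly solvable) scattering problem (\ref{lax3}). The core identity I would establish first is that, for every admissible spectral parameter $k$, the squared eigenfunctions built from Jost solutions satisfy an eigenvalue relation of the form
\begin{align*}
\mathcal{R}_{12}^{\star} F^{\pm}(x_1, x_2; k) = \Lambda(k)\, F^{\pm}(x_1, x_2; k),
\end{align*}
for an explicit scalar $\Lambda(k)$ that depends on $k$ only through $\lambda(k) = -k\coth(2k\delta)$. Using the decomposition $\mathcal{R}_{12}^{\star} = i\theta_{12}^{-}T^{\delta}_{12} - \theta_{12}^{+}$, the commutation $\partial_{x_j}T^{\delta}_{12} = T^{\delta}_{12}\partial_{x_j}$, the single-variable reduction $T^{\delta}_{12}a(x_j) = T^{\delta}_{j}a(x_j)$, and the displacement-operator representation of $T^{\delta}_{12}$ recalled in Subsection \ref{sub2.1}, a direct computation using the spectral equation (\ref{lax3}) for $u = Q$ collapses the right-hand side to a purely scalar multiple of the same squared eigenfunction. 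Restricting to the diagonal $x_1 = x_2$ and transporting through $\mathcal{J}^{-1}$ via the intertwining $\mathcal{R}^{\star}(Q) = \mathcal{J}\mathcal{R}(Q)\mathcal{J}^{-1}$ from (\ref{2.40}) then transfers the identity to $\mathcal{R}(Q)$ itself with the same eigenvalue.

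Once this identity is in hand, the three statements of the lemma follow from the behaviour of $\Lambda(k)$ together with the $L^{2}$-character of the associated squared eigenfunctions. For real $k$, the Jost solutions $N$, $\bar{N}$ have the oscillatory asymptotics (\ref{2.13}), so $F^{\pm}(x, x; k)$ is bounded but not in $L^{2}(\mathbb{R})$, and the range of $\Lambda$ over $k \in \mathbb{R}$ will be shown to be exactly $[0, +\infty)$, producing the essential spectrum. At the unique bound state $k = i\kappa_1$, the squared eigenfunction $F^{+}_{1}$ is exponentially decaying, and matching with the single-soliton scattering relations $a = 2\kappa_{1}$ and $c = 2\lambda_{1} + \delta^{-1}$ identifies $\Lambda(i\kappa_1) = -c$. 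As a consistency check I would verify directly that $\mathcal{R}(Q) Q = H_2'(Q) = -cQ$, which follows from $H_2'(u) = -u^2 - T^{\delta}u_x - u/\delta$ together with the soliton equation (\ref{1.17}); this pins down $Q$ as the $L^{2}$-eigenvector (up to normalization). At $k = 0$, $N(x, 0)$ still decays to its Jost constant so $F^{+}(x, 0) = |N(x, 0)|^{2} \in L^{2}(\mathbb{R})$ with $\Lambda(0) = 0$, providing the threshold kernel element. To rule out further discrete spectrum and to force the kernel to be one-dimensional, I would invoke the completeness of squared eigenfunctions for the ILW Lax operator \cite{KAS, KSA}, which guarantees that every $L^{2}$-eigenfunction must correspond to some $k \in \mathbb{R} \cup \{i\kappa_1\}$.

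The main obstacle I expect is the squared-eigenfunction computation of the first step. The two-variable operator $\mathcal{R}_{12}^{\star}$ couples $x_{1}$ and $x_{2}$ nontrivially through both $T^{\delta}_{12}$ and the combinations $\theta_{12}^{\pm}$, and the cancellations producing a purely scalar multiplier $\Lambda(k)$ require careful use of the fact that $N(x, k)$ and $\bar{N}^{*}(x, k)$ satisfy conjugate spectral problems at the same $k$. A secondary delicate point is handling the diagonal restriction $x_{1} = x_{2}$ inside the nonlocal kernel of $T^{\delta}_{12}$, where the representation $T^{\delta}_{12} = i(1 + e_{1}e_{2})(1 - e_{1}e_{2})^{-1}$ seems the cleanest tool because it reduces the coupled action to an alternating sum of displaced evaluations in the $i\delta$-strip of analyticity for the Jost functions. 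Once these algebraic identities are established, the surjectivity of $\Lambda(\mathbb{R})$ onto $[0,+\infty)$ and the remaining decay statements reduce to the already-known asymptotics of $N$ and $\bar{N}$ inherited from (\ref{2.13}).
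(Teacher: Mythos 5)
Your overall route is the same as the paper's: establish squared-eigenfunction identities for the two-variable recursion operator built from the Lax problem (\ref{lax3}) with $u=Q$, collapse them to the one-variable operator, read off the continuous family of eigenvalues $-2(\lambda(k)+\tfrac{1}{2\delta})=2k\coth(2k\delta)-\tfrac{1}{\delta}$ for real $k$, and identify the bound state $k=i\kappa_1$ with the eigenvalue $-c$ via $c=2\lambda_1+\delta^{-1}$, cross-checked against $\mathcal{R}(Q)Q=H_2'(Q)=-cQ$. The only structural difference is that you route the computation through $\mathcal{R}_{12}^{\star}$ and then conjugate by $\mathcal{J}$, whereas the paper works with $\mathcal{R}_{12}$ directly; note that in the paper's version of the adjoint computation the eigenfunctions of $\mathcal{R}^{\star}_{12}(Q)$ are $\theta_{12}^{-}F^{\pm}$ rather than $F^{\pm}$ themselves, so if you take the adjoint route you must track that extra factor of $\theta_{12}^{-}$ (it is what turns $F^{\pm}$ into $F^{\pm}_x$ after the diagonal reduction). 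This is a bookkeeping issue, not an obstruction.

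There is, however, one concrete error. You assert that at $k=0$ the function $F^{+}(x,0)=|N(x,0)|^{2}$ lies in $L^{2}(\mathbb{R})$ because ``$N(x,0)$ decays to its Jost constant.'' The Jost normalization (\ref{2.13}) gives $N(x;k)\to e^{2ikx}$ as $x\to+\infty$, so $N(x,0)\to 1$, and at $-\infty$ the scattering relation (\ref{2.15}) gives $N\to\bar a(0)+\bar b(0)$; hence $|N(x,0)|^{2}$ tends to nonzero constants at both ends and is \emph{not} in $L^{2}(\mathbb{R})$ --- exactly as the paper states. This matters beyond pedantry: if $F^{+}(x,0)$ were a genuine $L^{2}$ kernel element, then $0$ would be a second discrete eigenvalue of $\mathcal{R}(Q)$, contradicting the lemma's claim that $-c$ is the \emph{only} discrete eigenvalue (and undermining the later essential-invertibility remark). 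The point $0$ is a threshold resonance attached to the bottom of the essential spectrum $[0,+\infty)$, not an eigenvalue, and the ``kernel'' in the lemma must be understood in that generalized sense. Correct this and the rest of your argument, including the use of completeness of the squared eigenfunctions to exclude further discrete spectrum, is consistent with the paper's proof.
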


\begin{proof}
	Consider Jost solutions for the spectral problem (\ref{lax3}) under potential $u=Q$, incorporating asymptotic expression from (\ref{2.13}). This configuration yields a single discrete eigenvalue $k=i\kappa_1$ ($0<\kappa_1<\frac{\delta}{2\pi}$), which generates the soliton profile $Q$. Then utilizing properties of the singular operator $T^\delta, T^\delta_{12}$ from Subsection \ref{sub2.1}, the spectral problem (\ref{lax3}) and the relation $Q_{12}^{-}Q_{12}^{+} = Q_{12}^{+}Q_{12}^{-}$
	derive for $k> 0$ that
	\begin{align}
		&\left(Q_{12}^{+} - iQ_{12}^{-}T^\delta_{12}\right)\left(Q_{12}^{-}F^\pm(x_1,x_2;k) \right) = 4(\lambda+\frac{1}{2\delta}) Q_{12}^{-}F^\pm(x_1,x_2;k), \label{3.8} \\
		&\left(Q_{12}^{+} - iQ_{12}^{-}T^\delta_{12}\right)\left(Q_{12}^{-}F^{+}_1(x_1,x_2)\right) = 4(\lambda+\frac{1}{2\delta}) Q_{12}^{-}F^{+}_1(x_1,x_2), \label{3.10}
	\end{align}
Then from (\ref{3.8})-(\ref{3.10}) we deduce
	\begin{align}
		&\mathcal{R}_{12}(Q)F^\pm(x_1,x_2;k)= -4(\lambda+\frac{1}{2\delta}) F^\pm(x_1,x_2;k), \\
		&\mathcal{R}_{12}(Q)F^{+}_1(x_1,x_2) = -4(\lambda_1 +\frac{1}{2\delta})F^{+}_1(x_1,x_2)
		= -2cF^{+}_1(x_1,x_2).
	\end{align}
	Accounting for the properties in the bi-Hamiltonian structure (\ref{1.14}), we could obtain
	\begin{align}
		&\mathcal{R}(Q)F^\pm(x,k) = -2(\lambda+\frac{1}{2\delta})F^\pm(x,k), \text{for } k > 0,\label{3.14}\\
		&\mathcal{R}(Q)F^{+}_1(x) = -2(\lambda+\frac{1}{2\delta})F^{+}_1(x)=-cF^{+}_1(x) . \label{3.16}
	\end{align}
	From $\frac{\delta H_2}{\delta u}=\mathcal{R}(Q)\frac{\delta H_1}{\delta u}$, we find  $\mathcal{R}(Q)Q = -cQ$.  Differentiating it with respect to $c$ yields
	
	\[
	\mathcal{R}(Q)\frac{\partial Q}{\partial c} = -Q - c\frac{\partial Q}{\partial c}.
	\]
	Equation (\ref{3.14}) implies that the essential spectrum of $\mathcal{R}(Q)$ is $(0, +\infty)$, corresponding to $-2(\lambda+\frac{1}{2\delta})> 0$. The generalized eigenfunctions $F^{\pm}(x,k)$ exhibit no spatial decay and are not in $L^{2}(\mathbb{R})$, as evidenced by (\ref{2.13}).
Similarly, the kernel of $R(Q)$ is attached at $k=0$ and the associated kernel is $F^+(x,0) = |N(x,0)|^{2} \notin L^{2}(\mathbb{R})$. This complete the proof.
\end{proof}
\begin{re}
Since $\mathcal{R}(Q)Q = -cQ$, we can infer from (\ref{3.16}) that $F^{+}_1(x)\sim Q(x)$.
\end{re}

Similar to the proof of Lemma \ref{lem3.2}, we have the following result concerning the spectrum of the composite operators $\mathcal{R}^n(Q)$ for $n\geq 2$.
\begin{coro}
	The  composite operator $\mathcal{R}^m(Q)$ defined in $L^{2}(\mathbb{R})$ with domain $H^{m}(\mathbb{R})$ has only one eigenvalue $(-c)^m$, the essential spectrum is the interval $(0, \infty)$, and the corresponding generalized eigenfunctions do not have spatial decay and not in $L^{2}(\mathbb{R})$.
\end{coro}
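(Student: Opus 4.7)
The plan is to extend the spectral analysis of $\mathcal{R}(Q)$ carried out in Lemma \ref{lem3.2} by exploiting the fact that iterating the recursion operator preserves its eigenfunctions and raises its eigenvalues to the $m$-th power. Two items must be verified: (i) the discrete eigenvalue is exactly $(-c)^m$ with eigenfunction $Q$, and (ii) the essential spectrum fills out $(0,\infty)$ and is carried by non-$L^2$ generalized eigenfunctions.

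First, I would record the discrete eigenvalue. Starting from $\mathcal{R}(Q)Q=-cQ$ established in the proof of Lemma \ref{lem3.2}, a direct induction gives
\begin{align*}
\mathcal{R}^m(Q)Q = (-c)^m Q,
\end{align*}
and since $Q\in H^m(\mathbb{R})$, the value $(-c)^m$ is a genuine $L^2$-eigenvalue of $\mathcal{R}^m(Q)$ with eigenfunction $Q$. Similarly, iterating (\ref{3.14}) on the squared eigenfunctions produces
\begin{align*}
\mathcal{R}^m(Q) F^\pm(x,k) = \Bigl[-2\bigl(\lambda(k)+\tfrac{1}{2\delta}\bigr)\Bigr]^{m} F^\pm(x,k),\qquad k>0.
\end{align*}
As $k$ ranges over $(0,\infty)$, the quantity $-2(\lambda(k)+\tfrac{1}{2\delta})=2(k\coth(2k\delta)-\tfrac{1}{2\delta})$ sweeps monotonically through $(0,\infty)$, and so does its $m$-th power. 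Combined with the non-decay of $F^\pm(\cdot,k)$ inherited from the asymptotics (\ref{2.13}), this shows that $(0,\infty)$ is contained in the essential spectrum of $\mathcal{R}^m(Q)$ on $L^2(\mathbb{R})$ and is supported by generalized eigenfunctions outside $L^2(\mathbb{R})$. In the same way, the kernel of $\mathcal{R}^m(Q)$ is still spanned by $F^+(x,0)=|N(x,0)|^2\notin L^2(\mathbb{R})$, so $0$ is an endpoint of the essential spectrum rather than a discrete eigenvalue.

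Uniqueness of the discrete eigenvalue is the delicate step. The argument would invoke completeness of the squared-eigenfunction family $\{Q\}\cup\{F^\pm(\cdot,k)\}_{k>0}$ provided by the IST framework of Section \ref{sec.2}: any hypothetical $L^2$-eigenfunction $\varphi$ of $\mathcal{R}^m(Q)$ with eigenvalue $\nu\neq (-c)^m$ can be expanded in this basis; matching eigenvalues branch by branch forces the continuous coefficient to be supported on the set $\{k:[-2(\lambda(k)+1/(2\delta))]^m=\nu\}$, which is at most discrete in $k$, and the remaining combination of $F^\pm$'s cannot lie in $L^2(\mathbb{R})$. Thus $\varphi$ must be proportional to $Q$, which forces $\nu=(-c)^m$, contradicting the assumption.

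The main obstacle is this completeness-based uniqueness argument in the parity-sensitive case. When $m$ is even, $(-c)^m=c^m>0$ is \emph{embedded} in the essential spectrum $(0,\infty)$, so a Fredholm or perturbative argument based on isolation is unavailable; one must rely on the sharp dichotomy that $Q$ is an $L^2$ element while all $F^\pm(\cdot,k)$ are not, and on the explicit monotonicity of $k\mapsto k\coth(2k\delta)$ to ensure that the level set $[-2(\lambda(k)+1/(2\delta))]^m=c^m$ is a single point. Once this is in hand, the three conclusions of the corollary follow immediately from the computations outlined above.
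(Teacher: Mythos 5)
Your proposal is correct and follows essentially the same route the paper intends: the paper offers no separate proof of this corollary, stating only that it is ``similar to the proof of Lemma \ref{lem3.2},'' and your iteration of $\mathcal{R}(Q)Q=-cQ$ and of (\ref{3.14}) is exactly that argument. Your completeness-based uniqueness discussion (including the observation that for even $m$ the eigenvalue $c^m$ is embedded in the essential spectrum) supplies detail the paper leaves implicit but is consistent with the closure relation (\ref{3.39}) used later in the same section.
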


We now analyze the adjoint recursion operator $\mathcal{R}^{\star}(Q)$. Due to the factorization (\ref{3.4}), it shares eigenfunctions with $\mathcal{J}L_{n}$, making it particularly relevant for soliton spectral stability analysis. From (\ref{2.40}) we have
\begin{align*}
	\mathcal{R}^{\star}(u) &= J\mathcal{R}(u)J^{-1}.
\end{align*}
The spectral properties of $\mathcal{R}^{\star}(Q)$ are characterized as follows.
\begin{lemma}\label{lem3.3}
	The adjoint recursion operator $\mathcal{R}^{\star}(Q)$ defined in $L^{2}(\mathbb{R})$ with domain $H^{1}(\mathbb{R})$ possesses exactly one eigenvalue $-c$ with corresponding eigenfunction $Q_{x}$. Its essential spectrum is $(0, +\infty)$, where associated eigenfunctions lack spatial decay and do not belong to $L^{2}(\mathbb{R})$. Additionally, the kernel of $\mathcal{R}^{\star}(Q)$ is spanned by $(N\bar{N}^{*})_{x}(x,0)$.
\end{lemma}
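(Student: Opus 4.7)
The plan is to transfer the complete spectral picture from Lemma \ref{lem3.2} to $\mathcal{R}^{\star}(Q)$ via the intertwining relation $\mathcal{J}\mathcal{R}(Q) = \mathcal{R}^{\star}(Q)\mathcal{J}$ stated in (\ref{2.41}). The elementary observation is that if $\mathcal{R}(Q)\phi = \mu\phi$, then applying $\mathcal{J} = \partial_{x}$ from the left and invoking (\ref{2.41}) gives $\mathcal{R}^{\star}(Q)(\partial_{x}\phi) = \mu(\partial_{x}\phi)$. Hence every (generalized) eigenfunction of $\mathcal{R}(Q)$ produces a (generalized) eigenfunction of $\mathcal{R}^{\star}(Q)$ with the same eigenvalue by differentiation, and the proof reduces to verifying that this correspondence exhausts the spectrum in the right function spaces.

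Carrying this out step by step: from $\mathcal{R}(Q)Q = -cQ$ established in Lemma \ref{lem3.2}, the intertwining immediately yields $\mathcal{R}^{\star}(Q)Q_{x} = -cQ_{x}$; since the soliton $Q_c$ exhibits algebraic decay of order $|x|^{-2}$, one has $Q_{x}\in H^{1}(\mathbb{R}) \subset L^{2}(\mathbb{R})$, so $-c$ is an honest $L^{2}$-eigenvalue with eigenfunction $Q_{x}$. For the essential spectrum I would differentiate the identity $\mathcal{R}(Q)F^{\pm}(x,k) = -2(\lambda+\tfrac{1}{2\delta})F^{\pm}(x,k)$ from (\ref{3.14}) to obtain
\begin{align*}
	\mathcal{R}^{\star}(Q)\,\partial_{x}F^{\pm}(x,k) = -2\Bigl(\lambda+\tfrac{1}{2\delta}\Bigr)\partial_{x}F^{\pm}(x,k), \qquad k > 0,
\end{align*}
the coefficient $-2(\lambda+\tfrac{1}{2\delta})$ sweeping exactly $(0,+\infty)$ as $k$ ranges over the continuous spectrum. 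Since $F^{\pm}(x,k)$ are bounded but non-decaying at infinity (per the Jost asymptotics (\ref{2.13})), their spatial derivatives retain the same oscillatory boundary behavior and therefore lie outside $L^{2}(\mathbb{R})$, confirming them as generalized eigenfunctions parametrizing the essential spectrum. Finally, from $\mathcal{R}(Q)F^{+}(x,0)=0$ I obtain $\mathcal{R}^{\star}(Q)(N\bar{N}^{*})_{x}(x,0) = 0$, which yields the claimed kernel.

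The main technical obstacle is ruling out extraneous spectrum in $\mathcal{R}^{\star}(Q)$: because differentiation $\phi \mapsto \partial_{x}\phi$ has a nontrivial cokernel (constants are missed) and because the formal similarity $\mathcal{R}^{\star}(Q) = \mathcal{J}\mathcal{R}(Q)\mathcal{J}^{-1}$ uses the unbounded $\mathcal{J}^{-1}$, one must argue carefully that differentiating the squared-eigenfunction basis from the IST framework still produces a complete family in the relevant Hilbert space. This succeeds because constants do not lie in $L^{2}(\mathbb{R})$ and hence fall outside the domain under consideration; the similarity becomes effective modulo constants, and the standard completeness of the squared eigenfunctions associated with the spectral problem (\ref{lax3}) then transfers. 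Simplicity of the discrete eigenvalue $-c$ and of the kernel descends directly from the corresponding uniqueness assertions in Lemma \ref{lem3.2}, completing the proof.
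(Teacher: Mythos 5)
Your proposal is correct and follows essentially the same route as the paper: both transfer the spectral data of $\mathcal{R}(Q)$ from Lemma \ref{lem3.2} to $\mathcal{R}^{\star}(Q)$ by differentiation, the paper carrying this out through the $\theta_{12}^{-}$/squared-eigenfunction identities (\ref{3.8})--(\ref{3.10}) while you invoke the intertwining relation (\ref{2.41}) directly, arriving at the identical relations $\mathcal{R}^{\star}(Q)Q_{x}=-cQ_{x}$, $\mathcal{R}^{\star}(Q)F^{\pm}_{x}=-2(\lambda+\tfrac{1}{2\delta})F^{\pm}_{x}$, and kernel $(N\bar{N}^{*})_{x}(x,0)$. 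Your remark on exhaustiveness via completeness of the squared eigenfunctions is at the same level of rigor as the paper, which likewise defers the closure relation to (\ref{3.39}).
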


\begin{proof}
	Consider Jost solutions for spectral problem (\ref{lax3}) with potential $u=Q$, incorporating asymptotic expressions from (\ref{2.13}). The soliton profile $Q$ originates from eigenvalue $k=i\kappa$ ($0<\kappa_1<\frac{\delta}{2\pi}$). Then following the approach in Lemma \ref{lem3.2}'s proof, we find eigenvalues of $\mathcal{R}^{\star}_{12}(Q)$ near $Q$ using equations (\ref{3.8})-(\ref{3.10}) and obtain
	\begin{align}
		&\mathcal{R}^{\star}_{12}(Q)(Q^-_{12}F^\pm(x_1,x_2;k))= -4(\lambda+\frac{1}{2\delta}) Q^-_{12}F^\pm(x_1,x_2;k), \\
		&\mathcal{R}^{\star}_{12}(Q)(Q^-_{12}F^{+}_1(x_1,x_2)) = -4(\lambda_1 +\frac{1}{2\delta})Q^-_{12}F^{+}_1(x_1,x_2)
		= -2cQ^-_{12}F^{+}_1(x_1,x_2).
	\end{align}
This leads to the following relations
	\begin{align}
		&\mathcal{R}^{\star}(Q)F^\pm_x(x,k) = -2(\lambda+\frac{1}{2\delta})F^\pm_x(x,k), \text{for } k > 0, \label{3.20}\\
		&\mathcal{R}^{\star}(Q)\partial_xF^{+}_1(x) = -2(\lambda+\frac{1}{2\delta})\partial_xF^{+}_1(x)=-c\partial_xF^{+}_1(x), \label{3.22}\\
		&\mathcal{R}^{\star}(Q) \frac{\partial Q_{x}}{\partial c} = -Q_{x} - c\frac{\partial Q_{x}}{\partial c}.
	\end{align}
Since $\mathcal{R}(Q)Q_x = -cQ_x$, by (\ref{3.22}), $-c$ is the only discrete eigenvalue.
 Equation (\ref{3.20}) shows that the essential spectrum of $\mathcal{R}(Q)$ is $(0, +\infty)$, corresponding to $-2(\lambda+\frac{1}{2\delta})> 0$. Generalized eigenfunctions $F_x^\pm (x,k)$ exhibit no spatial decay and are not in $L^{2}(\mathbb{R})$, as evidenced by (\ref{2.13}). Similarly, the kernel of $R^\star(Q)$ is attached at $k=0$ and the associated kernel is $F_x^+(x,0) =\partial_x|N(x,0)|^{2} \notin L^{2}(\mathbb{R})$. This complete the proof.
\end{proof}

\begin{re}
	Lemmas \ref{lem3.2} and \ref{3.3} demonstrate that $\mathcal{R}(Q)$ and $\mathcal{R}^{\star}(Q)$ are essentially invertible in $L^{2}(\mathbb{R})$.
\end{re}
\subsection{The spectrum of linearized operators $\mathcal{J}L_m, L_m\mathcal{J}$ and $L_m$}
We now consider the operator $\mathcal{J}L_m$. Since $L_m=\mathcal{R}^{m-1}(Q)L_1$ (defined in $L_2(\mathbb{R})$ with domain $H^n(\mathbb{R})$), the symbol of the principle part of the operator $\mathcal{J}L_n$ is
 \begin{align}
 	\varrho_{m,c}(\xi):=i\xi (2\pi \xi \coth(2\pi\delta \xi)-\frac{1}{\delta})^{m-1}(2\pi \xi \coth(2\pi\delta \xi)+c-\frac{1}{\delta}).\label{3.24}
 \end{align}
Then we have the following statement related to the spectra of the operator $\mathcal{J}L_m$.
\begin{prop}\label{prop3.1}
	The essential spectra of $\mathcal{J}L_m$ (defined in $L^{2}(\mathbb{R})$ with domain $H^{m+1}(\mathbb{R})$) for $m \geq 1$ is $i\mathbb{R}$, the kernel is spanned by the function $Q_{x}$ and the generalized kernel is spanned by $\frac{\partial Q}{\partial c}$.
\end{prop}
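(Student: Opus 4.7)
The plan is to establish the three spectral properties of $\mathcal{J}L_m$ (essential spectrum, kernel, and generalized kernel) by exploiting two main tools from the preceding material: the factorization $L_m=\mathcal{R}^{m-1}(Q)L_1$ obtained by iterating Proposition \ref{prop2.3} together with the variational identity (\ref{2.25}), and the commutation identity (\ref{3.4}) of Lemma \ref{lem3.1}, which ties $\mathcal{J}L_m$ to the adjoint recursion operator $\mathcal{R}^{\star}(Q)$ whose spectrum is known from Lemma \ref{lem3.3}.

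For the essential spectrum, I would treat $\mathcal{J}L_m$ as a Fourier multiplier perturbed by multiplication operators involving $Q$ and its derivatives. The principal part is an operator whose symbol is exactly $\varrho_{m,c}(\xi)$ from (\ref{3.24}), and the lower-order terms are relatively compact perturbations since $Q$ decays at infinity (algebraically, which suffices for relative compactness of the multiplication operators on $H^{m+1}(\mathbb{R})$ to $L^2(\mathbb{R})$). By Weyl's essential spectrum theorem, $\sigma_{\mathrm{ess}}(\mathcal{J}L_m)$ coincides with the essential spectrum of the multiplier, which is the closure of the range of $\varrho_{m,c}$. Since $\varrho_{m,c}(\xi)\in i\mathbb{R}$ for all $\xi\in\mathbb{R}$, $\varrho_{m,c}(0)=0$, and $|\varrho_{m,c}(\xi)|\to\infty$ as $|\xi|\to\infty$, continuity forces the range to equal $i\mathbb{R}$.

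For the kernel, I first show $Q_x\in\ker(\mathcal{J}L_m)$. Differentiating the Euler--Lagrange identity $H'_{m+1}(Q)+cH'_m(Q)=0$ from (\ref{2.25}) in $x$ gives $L_m Q_x=0$, hence $\mathcal{J}L_m Q_x=0$. For the reverse inclusion, if $\mathcal{J}L_m\phi=0$ with $\phi\in H^{m+1}(\mathbb{R})$, then $L_m\phi$ is constant; decay at infinity forces $L_m\phi=0$. Using $L_m=\mathcal{R}^{m-1}(Q)L_1$ together with Lemma \ref{lem3.2}, which shows that $0$ is not an $L^2$-eigenvalue of $\mathcal{R}(Q)$ (its kernel generator $|N(x,0)|^2$ lies outside $L^2$), I can iteratively reduce $L_m\phi=0$ to $L_1\phi=0$. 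Finally, the identification $\ker L_1=\mathrm{span}\{Q_x\}$ follows from the commutation identity (\ref{3.4}) for $m=1$, which forces eigenfunctions of $\mathcal{J}L_1$ to be shared with those of $\mathcal{R}^{\star}(Q)$, combined with Lemma \ref{lem3.3} which lists those eigenfunctions explicitly.

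For the generalized kernel, suppose $\mathcal{J}L_m\phi\in\ker(\mathcal{J}L_m)=\mathrm{span}\{Q_x\}$, so that $\mathcal{J}L_m\phi=\alpha Q_x$ for some scalar. Integrating and using decay gives $L_m\phi=\alpha Q$. To invert this, I would differentiate (\ref{2.25}) with respect to $c$, obtaining $L_m\partial_c Q=-H'_m(Q)$; then iterating the recursion $H'_{j+1}(Q)=-cH'_j(Q)$ from (\ref{2.25}) starting from $H'_1(Q)=Q$ yields $H'_m(Q)=(-c)^{m-1}Q$, so $L_m\partial_c Q=-(-c)^{m-1}Q$. This equation is solvable for $\alpha Q$, producing $\phi=-\alpha(-c)^{1-m}\partial_c Q$ modulo $\ker L_m=\mathrm{span}\{Q_x\}$. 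Hence the generalized kernel is spanned by $\partial_c Q$ (together with $Q_x$).

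The main obstacle I expect is the rigorous characterization $\ker L_1=\mathrm{span}\{Q_x\}$ and the injectivity of $\mathcal{R}(Q)$ on the appropriate Sobolev space; these steps ultimately rely on a careful use of the squared-eigenfunction expansion from the IST and the fact that the generalized eigenfunctions $F^{\pm}(x,k)$ of Lemma \ref{lem3.2} fail to lie in $L^2(\mathbb{R})$. The functional-analytic bookkeeping of regularity and decay — ensuring that integration of $\mathcal{J}L_m\phi$ produces no residual constant and that the compactness argument for the essential spectrum genuinely applies with algebraic-decay potentials — is where the technical difficulty lies.
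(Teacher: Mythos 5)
Your proposal is correct, and its core computations coincide with the paper's: the paper's proof of Proposition \ref{prop3.1} is a direct verification that $\mathcal{J}L_mF^\pm(x,k)=\varrho_{m,c}(\pm2k)F^\pm(x,k)$, that $\mathcal{J}L_mQ_x=0$, and that $\mathcal{J}L_m\frac{\partial Q}{\partial c}=(-1)^mc^{m-1}Q_x$ (equations (\ref{3.25})--(\ref{3.28})), exactly the identities you derive by differentiating (\ref{2.25}) in $x$ and in $c$. Where you diverge is instructive on both sides. For the essential spectrum, the paper reads it off as the closure of $\{\varrho_{m,c}(\pm2k):k>0\}$, implicitly assuming the squared eigenfunctions $F^\pm$ exhaust the continuous spectrum --- a fact that really rests on the completeness relation (\ref{3.39}) established only afterwards; your Weyl-theorem route (principal Fourier multiplier with symbol (\ref{3.24}) plus relatively compact lower-order terms) is more self-contained, though you should note that $\mathcal{J}L_m$ is not self-adjoint, so one must invoke a version of Weyl's theorem valid for relatively compact perturbations of closed operators, and verify relative compactness of the terms involving the algebraically decaying $Q$. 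For the kernel and generalized kernel, the paper only proves the forward inclusions; you additionally supply the converses, and your reduction of $L_m\phi=0$ to $L_1\phi=0$ via the factorization $L_m=\mathcal{R}^{m-1}(Q)L_1$ and the $L^2$-injectivity of $\mathcal{R}(Q)$ from Lemma \ref{lem3.2} is a genuine strengthening. The one step where your argument inherits the paper's looseness is the identification $\ker L_1=\mathrm{span}\{Q_x\}$: commuting with $\mathcal{R}^\star(Q)$ does not by itself force shared eigenfunctions, so this ultimately still rests on the completeness relation (\ref{3.39}) (or on the classical single-soliton spectral analysis); similarly, to close the generalized-kernel claim one should check the Jordan chain terminates, which follows from $\langle\partial_x^{-1}\frac{\partial Q}{\partial c},Q_x\rangle=-\mathcal{G}^\delta_{\kappa,c}\neq0$. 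Neither point is a gap relative to the paper, which omits both.
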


\begin{proof}
	The proof is by direct verification. We compute the spectrum of the operator $\mathcal{J}L_m$ directly by employing the squared eigenfunctions as follows
	\begin{align}
		&\mathcal{J}L_mF^\pm(x,k) = \varrho_{m,c}(\pm2k)F^\pm(x,k), \quad \text{for } k > 0,\label{3.25} \\
		&\mathcal{J}L_m\partial_xF^+_1(x) \sim \mathcal{J}L_m Q_{x} = 0, \label{3.27} \\
		&\mathcal{J}L_m \frac{\partial Q}{\partial c}= (-1)^{m} c^{m-1} Q_{x}. \label{3.28}
	\end{align}
	In view of (\ref{3.24}) and (\ref{3.25}), the essential spectrum of $\mathcal{J}L_m$ are $ \varrho_{m,c}(\pm2k)$ for $k > 0$, which is the whole imaginary axis. In view of (\ref{3.27}) and (\ref{3.28}), the kernel and generalized kernel of $\mathcal{J}L_m$ is $Q_{x}$ and $\frac{\partial Q}{\partial c}$, respectively. The proof is completed.
\end{proof}
For the adjoint operator of $\mathcal{J}L_m$, namely, the operator $-L_{m}\mathcal{J}$, for the spectrum of which, we have the following result.
\begin{prop}\label{prop3.2}
	The linear operator $L_{m}\mathcal{J}$, defined in $L^{2}(\mathbb{R})$ with domain $H^{m+1}(\mathbb{R})$ for $m \geq 1$, possesses an essential spectrum coinciding with the imaginary axis $i\mathbb{R}$. Its kernel is spanned by the soliton solution $Q$, while the generalized kernel is spanned by the integrated derivative $\partial_{x}^{-1}\left(\frac{\partial Q}{\partial c}\right)$.
\end{prop}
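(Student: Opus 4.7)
The plan is to mirror the proof of Proposition \ref{prop3.1} by exploiting three ingredients: (i) translation and scaling invariance of the single-soliton variational identity (\ref{2.25}); (ii) the bi-Hamiltonian recursion $H'_m(Q) = \mathcal{R}^{m-1}(Q)Q$ combined with Lemma \ref{lem3.2}; and (iii) the duality $L_m\mathcal{J} = -(\mathcal{J}L_m)^\star$, which holds because $L_m$ is self-adjoint and $\mathcal{J}=\partial_x$ is skew-adjoint, together with the commutation identity $L_m\mathcal{J}\mathcal{R}(Q) = \mathcal{R}(Q)L_m\mathcal{J}$ from Lemma \ref{lem3.1}. The commutation forces every generalized eigenvector of $L_m\mathcal{J}$ at $0$ to lie in a discrete eigenspace of $\mathcal{R}(Q)$, which by Lemma \ref{lem3.2} is one-dimensional.

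First I would compute the kernel. Differentiating (\ref{2.25}) applied to the translated profile $Q_c(\cdot - x_0)$ with respect to $x_0$ gives $L_m Q_x = 0$; since $\mathcal{J}Q = Q_x$, this yields $L_m\mathcal{J}Q = 0$. Hence $Q \in \ker(L_m\mathcal{J})$, and uniqueness follows from the commutation argument above: any $L^2$ null vector must be an eigenfunction of $\mathcal{R}(Q)$ at a zero of the symbol $\varrho_{m,c}$, and by Lemma \ref{lem3.2} only the discrete mode at eigenvalue $-c$, represented by $Q$, qualifies.

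Next, for the generalized kernel, I would differentiate (\ref{2.25}) with respect to the wave speed $c$ to obtain $L_m\partial_c Q = -H'_m(Q)$. The bi-Hamiltonian recursion, combined with iterating $\mathcal{R}(Q)Q = -cQ$, gives $H'_m(Q) = (-c)^{m-1}Q$, so
\begin{align*}
L_m \partial_c Q = (-1)^m c^{m-1}\, Q.
\end{align*}
Setting $z := \partial_x^{-1}(\partial Q/\partial c)$ so that $\mathcal{J}z = \partial_c Q$, we obtain $L_m\mathcal{J}z = (-1)^m c^{m-1} Q$, and consequently $(L_m\mathcal{J})^2 z = 0$, placing $z$ in the generalized kernel. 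For the essential spectrum the cleanest route is via duality and Proposition \ref{prop3.1}: $\sigma_{\mathrm{ess}}(L_m\mathcal{J}) = -\overline{\sigma_{\mathrm{ess}}(\mathcal{J}L_m)} = -\overline{i\mathbb{R}} = i\mathbb{R}$; equivalently, one may invoke Weyl's theorem, the principal symbol of $L_m\mathcal{J}$ being $\varrho_{m,c}$ of (\ref{3.24}) (reversing the composition only modifies lower-order terms), whose range is $i\mathbb{R}$.

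The main technical obstacle is the rigorous interpretation of the antiderivative $\partial_x^{-1}(\partial Q/\partial c)$: since the ILW soliton mass $\int_{\mathbb{R}} Q_c\,dx$ depends on $c$, the function $\partial_c Q$ generally has nonzero mean, so $z$ fails to lie in $L^2(\mathbb{R})$. I would handle this by interpreting $z$ as a function in a suitable weighted space (or modulo an additive constant), which is consistent with the formal meaning of a generalized eigenvector at an edge of the essential spectrum; crucially, the identity $L_m\mathcal{J}z = (-1)^m c^{m-1} Q$ remains valid in $L^2(\mathbb{R})$ regardless of the normalization of $z$, so the characterization of the generalized-kernel direction is unaffected. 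A secondary point is invoking completeness of the square eigenfunctions of $\mathcal{R}(Q)$ to rule out further generalized-kernel vectors; this is supplied by the IST framework of Section \ref{sec.2}.
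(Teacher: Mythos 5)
Your proposal is correct and follows essentially the same route as the paper: both rest on the identities $L_{m}\mathcal{J}Q=0$ and $L_{m}\mathcal{J}\partial_{x}^{-1}(\partial Q/\partial c)=(-1)^{m}c^{m-1}Q$ together with the spectral family generated by the squared eigenfunctions $F^{\pm}(x,k)$ with symbol $\varrho_{m,c}$ (the paper simply asserts these relations, while you derive the first two by differentiating (\ref{2.25}) in $x_{0}$ and $c$ and obtain the essential spectrum by duality with Proposition \ref{prop3.1} -- a cosmetic difference). Your observation that $\partial_{x}^{-1}(\partial Q/\partial c)$ fails to lie in $L^{2}(\mathbb{R})$ because $\partial_{c}\int_{\mathbb{R}}Q_{c}\,\mathrm{d}x\neq 0$ is a genuine point the paper passes over silently, and your resolution (the generalized-kernel direction is characterized by the $L^{2}$-valid identity $L_{m}\mathcal{J}z=(-1)^{m}c^{m-1}Q$) is consistent with how the paper uses this object in (\ref{3.39}).
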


\begin{proof}
	Direct spectral analysis using squared eigenfunctions yields the following relations
	\begin{align}
		&L_{m}\mathcal{J}F^\pm(x,k) = \varrho_{m,c}(\pm2k)F^\pm(x,k), \quad \text{for}~ k > 0, \label{3.29}\\
		&L_{m}\mathcal{J}F^+_1(x) \sim L_{m}Q_{x}=0, \label{3.31} \\
		&L_{m}\mathcal{J}\partial_{x}^{-1}(\frac{\partial Q}{\partial c}) = (-1)^{m}c^{m-1}Q. \label{3.32}
	\end{align}
	
	Analysis of equations (\ref{3.24}) and (\ref{3.29}) shows the essential spectrum consists of points $\varrho_{m,c}(\pm2k)$ for $k > 0$, forming the entire imaginary axis. Equations (\ref{3.31}) and (\ref{3.32}) identify $Q$ and $\partial_{x}^{-1}\left(\frac{\partial Q}{\partial c}\right)$ as basis elements for the kernel and generalized kernel, respectively.
\end{proof}
 On account of Propositions \ref{prop3.1} and \ref{prop3.2}, we now have the two function sets as follows. The first set
\begin{align}
	 \left\{F^\pm_{x}(x,k), \quad \text{ for } k>0;\quad Q_{x};\quad
	\frac{\partial Q}{\partial c}\right\}\label{set1}
\end{align}
 comprises linearly independent eigenfunctions and generalized kernel elements of the operator $\mathcal{J}L_{m}$. These elements are essentially orthogonal under the $L^{2}$ inner product. The second set
\begin{align}
	\left\{F^\pm(x,k),\quad \text{ for } k>0;\quad Q;\quad \partial_{x}^{-1}\frac{\partial Q}{\partial c}\right\}\label{set2}
\end{align}
 consists of linearly independent eigenfunctions and generalized kernel elements of the operator $L_{m}\mathcal{J}$.

 Given the even symmetry of $\frac{\partial Q}{\partial c}$, for $k,k'>0$, we can employ asymptotic behaviors of Jost solutions (\ref{2.13}) to compute their inner products

 \begin{align}
 	&\int_{\mathbb{R}} F^\pm_{x}(x,k)F^\mp(x,k') dx = \mp 2\pi ik |a(k)|^2\delta(k-k'), \label{3.35}\\
 	&\int_{\mathbb{R}} F^\pm_{x}(x,\lambda)F^\pm(x,k') dx = 0, \\
 	&\int_{\mathbb{R}} Q_{x}\partial_{x}^{-1}\left(\frac{\partial Q}{\partial c}\right) dx= -\mathcal{G}^\delta_{\kappa,c}, \\
 	&\int_{\mathbb{R}} \frac{\partial Q}{\partial c} Q dx = \frac{\text{d}H_{1}(Q)}{\text{d}c} = \mathcal{G}^\delta_{\kappa,c}.
 \end{align}
 The closure relation demonstrates functional completeness
 \begin{align}\label{3.39}
 	\mp &\int_{0}^{\infty} \frac{1}{2\pi ik |a(k)|^2} F^\pm_{x}(x,k)F^\mp(y,k)\text{d}k \\\nonumber
 	&+ \frac{1}{\mathcal{G}^\delta_{\kappa,c}} \biggl[ Q(y)\frac{\partial Q(x)}{\partial c} - Q_{x}\partial_{y}^{-1}\frac{\partial Q(y)}{\partial c} \biggr] = \delta(x-y).
 \end{align}
 This implies any function $z(y)$ vanishing at infinity expands over the two bases (\ref{set1}) and (\ref{set2}). In particular,we have the following decomposition of the function $z$
 \begin{align}\label{decom}
 	z(x) = \int_{0}^{\infty} F^\pm_x(x,k) \alpha^\pm (k)\text{d}k + \beta Q_{x} + \gamma\frac{\partial Q}{\partial c},
 \end{align}
with coefficients $\alpha^\pm(k), \beta$ and $\gamma$ can be computed explicitly by the closure relation (\ref{3.39}). Similarly, the function $z(x)$ decomposes over the second set (\ref{set2}) by multiplying (\ref{3.39}) with $z(x)$ and integrating with d$x$.

With the decomposition of function $z(x)$ in (\ref{decom}), we can compute the quadratic form related to the operator $L_n$ and illustrate the spectral information. The following statement describes the full spectrum of linearized operator $L_m=H''_{m+1}(Q)+cH''_m(Q)$.
\begin{lemma}\label{lem3.4}
	For $m \geq 1$ and any $z \in H_{\mathrm{od}}^{\frac{m}{2}}(\mathbb{R})$, the quadratic form satisfies
	\[
	\langle L_{m}z,z\rangle \geq 0,
	\]
with equality holding if and only if $z$ is a constant multiple of $Q_{x}$.
Regarding the spectral structure in even symmetric spaces
	\begin{itemize}
\item[(i)] In $H_{\mathrm{ev}}^{\frac{m}{2}}(\mathbb{R})$ with odd $n$, the operator $L_{m}$ exhibits exactly one negative eigenvalue while having no eigenvalue at zero.
\item[(ii)] For even $n$ in $H_{\mathrm{ev}}^{\frac{m}{2}}(\mathbb{R})$, $L_{m}$ possesses no negative eigenvalues.
	\end{itemize}
\end{lemma}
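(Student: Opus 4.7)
The plan is to read the sign of $\langle L_{m}z, z\rangle$ directly off the expansion (\ref{decom}). Writing a real $z$ in the Set~1 basis as
\begin{align*}
z = \int_{0}^{\infty}\bigl[F^{+}_{x}(\cdot,k)\,\alpha^{+}(k) + F^{-}_{x}(\cdot,k)\,\alpha^{-}(k)\bigr]\,\mathrm{d}k + \beta\,Q_{x} + \gamma\,\frac{\partial Q}{\partial c},
\end{align*}
the reality condition forces $\alpha^{-}(k)=\overline{\alpha^{+}(k)}$ together with $\beta,\gamma\in\mathbb{R}$. Since $Q$ is even, $Q_{x}$ is odd and $\partial_{c}Q$ is even, and the covariance of the Jost solutions under $x\mapsto -x$ splits the continuous-spectrum basis into odd and even symmetric pieces. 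Hence $z\in H^{m/2}_{\mathrm{od}}(\mathbb{R})$ enforces $\gamma=0$, while $z\in H^{m/2}_{\mathrm{ev}}(\mathbb{R})$ enforces $\beta=0$.

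Next I apply $L_{m}$ term by term using Propositions \ref{prop3.1} and \ref{prop3.2}: $L_{m}F^{\pm}_{x} = \varrho_{m,c}(\pm 2k)\,F^{\pm}$, $L_{m}Q_{x}=0$, and $L_{m}\,\partial_{c}Q = (-1)^{m}c^{m-1}Q$. Pairing the resulting Set~2 representation of $L_{m}z$ against the Set~1 representation of $z$ via the biorthogonality relation (\ref{3.35}) and the companion identities for $(F^{\pm}_{x},F^{\pm})$, $(Q_{x},\partial_{x}^{-1}\partial_{c}Q)$ and $(\partial_{c}Q,Q)$, only the cross terms $F^{+}\cdot F^{-}_{x}$, $F^{-}\cdot F^{+}_{x}$ and the diagonal pair $(\partial_{c}Q,Q)$ survive; combining the purely imaginary factors with $\varrho_{m,c}(\pm 2k) = \pm 2ik\,g(2k)^{m-1}(g(2k)+c)$, where $g(\xi)=2\pi\xi\coth(2\pi\delta\xi)-\tfrac{1}{\delta}>0$ on $(0,\infty)$, yields the clean splitting
\begin{align*}
\langle L_{m}z, z\rangle = 8\pi\int_{0}^{\infty} k^{2}|a(k)|^{2}\,g(2k)^{m-1}(g(2k)+c)\,|\alpha^{+}(k)|^{2}\,\mathrm{d}k + (-1)^{m}\,c^{m-1}\,\mathcal{G}^{\delta}_{\kappa,c}\,\gamma^{2}.
\end{align*}

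The three assertions of the lemma follow immediately. On $H^{m/2}_{\mathrm{od}}(\mathbb{R})$ the second term is absent and the first is non-negative, with equality iff $\alpha^{+}\equiv 0$; in that case $z=\beta Q_{x}$. For $z\in H^{m/2}_{\mathrm{ev}}(\mathbb{R})$ with $m$ odd, the sign $(-1)^{m}=-1$ makes $\gamma\,\partial_{c}Q$ a strictly negative direction for $L_{m}$, so there is at least one negative eigenvalue; since the only possible negative contribution is one-dimensional (the $\gamma$-direction) while the continuous part is non-negative, a min-max argument produces exactly one, and the linear independence of the Set~2 basis in the expansion of $L_{m}z$ rules out a zero eigenvalue ($L_m z = 0$ forces $\alpha^{+}\equiv 0$ and $\gamma=0$, hence $z=0$ in the even subspace). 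For $m$ even, both terms in the splitting are non-negative, so $L_{m}$ has no negative eigenvalues on $H^{m/2}_{\mathrm{ev}}(\mathbb{R})$.

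The hardest part will be keeping the signs straight in the continuous-spectrum contribution, because $\varrho_{m,c}(\pm 2k)$ and $\int F^{\pm}_{x}F^{\mp}\,\mathrm{d}x$ are both purely imaginary and only the reality constraint $\alpha^{-}=\overline{\alpha^{+}}$ converts their product into the real positive density $k^{2}g(2k)^{m-1}(g(2k)+c)|\alpha^{+}|^{2}$. Equally delicate is the parity step, which relies on the reflection symmetry of the Jost solutions for the even profile $Q$ and is what legitimately kills the $\gamma$ direction on odd test functions and the $\beta$ direction on even ones.
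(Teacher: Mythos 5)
Your proposal is correct and follows essentially the same route as the paper: decompose $z$ in the squared-eigenfunction basis (\ref{decom}), apply $L_m$ using the relations from Propositions \ref{prop3.1}--\ref{prop3.2}, pair against the dual basis via (\ref{3.35}), and read the inertia off the resulting splitting, which matches the paper's (\ref{3.45}) up to an inessential normalization constant. Your extra care with the reality constraint $\alpha^-=\overline{\alpha^+}$ and the purely imaginary factors is a welcome clarification of a step the paper leaves implicit, but it is not a different argument.
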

\begin{proof}
For any $z(x) \in H^{\frac{m}{2}}(\mathbb{R})$, decompose $z(x)$ as in (\ref{decom}) and evaluate the quadratic form $\langle L_m z,z\rangle$
\begin{align}\label{3.41}
	\langle L_m z, z\rangle &= \langle \int_0^\infty \alpha^\pm(k) L_m F^\pm_{x}(x,k)  \text{d}k, \int_0^\infty \alpha^\pm(k) F^\pm_{x}(x,k) \text{d}k \rangle\\\nonumber
	&\quad+ 2\gamma \langle\int_0^\infty \alpha^\pm (k) L_m F^\pm_{x}(x,k)  \text{d}k,\frac{\partial Q}{\partial c}\rangle + \gamma^2 \langle L_m \frac{\partial Q}{\partial c},\frac{\partial Q}{\partial c}\rangle \\\nonumber
	&= I + II + III.
\end{align}		
From (\ref{3.29}) and orthogonality properties of the two sets (\ref{set1}) and (\ref{set2}), we can find that
\begin{align}\label{ii}
			II =2\gamma \int_0^\infty
			\langle F^\pm(x,k),\frac{\partial Q}{\partial c} \rangle \alpha^\pm(k) \varrho_{m,c}(\pm 2k) \text{d}k = 0.
\end{align}
For the third term, a direct computation shows that
\begin{align}\label{iii}
			III = \mathcal{G}^\delta_{\kappa,c} \gamma^2 (-1)^m c^{m-1}.
\end{align}
For term $I$, applying (\ref{3.29}) and (\ref{3.35}) yields
\begin{align}\label{i}
I &= \langle
\int_0^\infty \alpha^\pm (k) \varrho_{m,c}(\pm 2k) F^\pm(x,k) \text{d}k, \int_{0}^{\infty} \alpha^\pm (k) F^\pm_{x}(x,k) \text{d}k\rangle \geq 0,
\end{align}
which vanishes if and only $\alpha^\pm (k) = 0$. Combining (\ref{ii}), (\ref{iii}) and (\ref{i}), one has
\begin{align}\label{3.45}
\langle L_m z, z\rangle &=\mathcal{G}^\delta_{\kappa,c} \gamma^2 (-1)^m c^{m-1}+
4\pi\int_{0}^{\infty} \mathcal{F}_m^\delta (k)|a(k)|^2 |\alpha^\pm(k)|^2 \text{d}k,
\end{align}
with $\mathcal{F}_m^\delta(k) =k^2 (4\pi k\coth(4\pi\delta k)-\frac{1}{\delta})^{m-1}(4\pi k \coth(4\pi\delta k)+c-\frac{1}{\delta})$.

For $ z\in H_{od}^{\frac{m}{2}}(\mathbb{R}) $, we have $ \gamma=0 $, then (\ref{3.45}) and (\ref{i}) reveal that $ \langle L_{m}z,z\rangle\geq 0 $. Moreover, $ \langle L_{m}z,z\rangle=0 $ infers that $ \alpha^\pm(k)=0 $, therefore, $ z=\beta Q_{x} $ for $ \beta\neq0 $.

If $ z\in H_{ev}^{\frac{m}{2}}(\mathbb{R}) $, we then have $ \beta=0 $, In the hyperplane $ \gamma=0 $, $ \langle L_{m}z,z\rangle\geq 0 $ and $ \langle L_{m}z,z\rangle=0 $ if and only if $ \alpha^\pm(k)=0 $, then one has $ z=0 $. Therefore, $ \langle L_{m}z,z\rangle>0 $ in the hyperplane $ \gamma=0 $ and which implies that $ L_{m} $ can have at most one negative eigenvalue. If $ m $ is odd, then $ L_{m}\frac{\partial Q}{\partial c} = -c^{m-1}Q < 0 $ and $ \left\langle L_{m}\frac{\partial Q}{\partial c}, \frac{\partial Q}{\partial c} \right\rangle = \mathcal{G}^\delta_{\kappa,c}  (-1)^m c^{m-1} < 0 $. Therefore, $ L_{m} $ has exactly one negative eigenvalue. If $ m $ is even, then from (\ref{3.45}) or $ \left\langle L_{m}\frac{\partial Q}{\partial c}, \frac{\partial Q}{\partial c} \right\rangle = \mathcal{G}^\delta_{\kappa,c} c^{m-1} > 0 $, which means that $ L_{m} $ has no negative eigenvalue. This completes the proof of Lemma \ref{lem3.4}.
\end{proof}
\begin{re}
	Lemma \ref{lem3.4} states that for $ k \in \mathbb{N} $, the inertia of the operators $ L_{m} $ satisfy $ \mathrm{in}(L_{2k}) = (0,1) $ and $ \mathrm{in}(L_{2k-1}) = (1,1) $. One can verify, by Weyl's essential spectrum theorem, that the essential spectrum of $ L_{m} $ ($ m \geq 2 $) is the interval $ [0,+\infty) $. It is inferred from $ L_{2k} = \mathcal{R}(Q)L_{2k-1} $ or (\ref{3.45}) that the operator $ L_{2k} $ has a positive eigenvalue $ \nu = O(c^{2k}) $ (with $ L^{2} $-eigenfunctions), which may possibly be embedded into its continuous spectrum.
\end{re}

As a direct consequence of Lemma \ref{lem3.4}, one has the following spectral information of higher order linearized operators
\[
\mathcal{T}_{m,j} := H_{m+2}^{\prime\prime}(Q_{c_j}) + (c_1 + c_2)H_{m+1}^{\prime\prime}(Q_{c_j}) + c_1c_2H_m^{\prime\prime}(Q_{c_j}),
\]
(defined in $ L^{2}(\mathbb{R}) $ with domain $ H^{1}(\mathbb{R}) $) with $ n \geq 1 $, $ j=1,2 $ and $ c_1 \leq c_2 $, which are related closely to stability problem of the double solitons $ U^{(2)} $. Following the same line of the proof of Lemma \ref{lem3.4}, we have the following results.
\begin{coro}\label{coro3.3}
The operators $\mathcal{T}_{m,j}$ with $ m \geq 1 $, $ j=1,2 $ and $ c_1 \leq c_2 $ have the following properties
\begin{itemize}
	\item [(i)]	For $ m \geq 1 $ and $ c_1 = c_2 = c $, we have $ \mathcal{T}_{m,1} = \mathcal{T}_{m,2} \geq 0 $, and the eigenvalue zero is double with eigenfunctions $ Q_c' $ and $ \frac{\partial Q_c}{\partial c} $.
	\item [(ii)]For $ m \geq 1 $ odd and $ c_1 < c_2 $, the operator $ \mathcal{T}_{m,1} $ has one negative eigenvalue and $ \mathcal{T}_{m,2} \geq 0 $ is positive.
	\item [(iii)]For $ m \geq 1 $ even and $ c_1 < c_2 $, the operator $ \mathcal{T}_{m,1} $ is positive and $ \mathcal{T}_{m,2} \geq 0 $ has one negative eigenvalue.
	\item [(iv)]$ \mathcal{T}_{m,j} $ have zero as a simple eigenvalue with associated eigenfunctions $ Q_{c_j}' $.
\end{itemize}
\end{coro}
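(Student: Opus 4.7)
The plan is to exploit the factorization
\begin{align*}
\mathcal{T}_{m,j}
= L_{m+1}^{(j)} + c_{3-j}\, L_{m}^{(j)}
= \bigl(\mathcal{R}(Q_{c_j}) + c_{3-j}\bigr) L_{m}^{(j)},
\qquad L_{m}^{(j)} := H_{m+1}''(Q_{c_j}) + c_j H_{m}''(Q_{c_j}),
\end{align*}
where $L_{m}^{(j)}$ is exactly the operator studied in Lemma~\ref{lem3.4} with $c=c_j$, and the second equality follows from Proposition~\ref{prop2.3}. The null-space statements in (i) and (iv) will be extracted directly from this factorization, while the inertia statements (ii)--(iii) will be obtained by redoing the quadratic-form computation of Lemma~\ref{lem3.4}.

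By Lemma~\ref{lem3.2}, the operator $\mathcal{R}(Q_{c_j}) + c_{3-j}$ has only the simple discrete eigenvalue $c_{3-j}-c_j$ (with eigenfunction $Q_{c_j}$) and essential spectrum $[c_{3-j},\infty)$; hence its kernel is trivial when $c_1\ne c_2$ and equals $\mathrm{span}(Q_{c_j})$ when $c_1=c_2$. Combined with $\ker L_m^{(j)} = \mathrm{span}(Q'_{c_j})$ from Lemma~\ref{lem3.4}, and with the identity $L_m^{(j)}\tfrac{\partial Q_{c_j}}{\partial c_j}=(-1)^m c_j^{m-1}Q_{c_j}$ that one extracts by integrating \eqref{3.28}, this immediately gives $\mathcal{T}_{m,j}Q'_{c_j}=0$ in all cases, proving (iv); moreover, at $c_1=c_2=c$ the identity $(-1)^{m+1}c^m+c\cdot(-1)^m c^{m-1}=0$ shows that $\tfrac{\partial Q_c}{\partial c}$ is also annihilated, producing the double kernel asserted in (i).

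For (ii) and (iii) I decompose an arbitrary $z\in H^{(m+1)/2}(\R)$ in the basis \eqref{decom} associated with $Q_{c_j}$ and add the two copies of \eqref{3.45} for $L_{m+1}^{(j)}$ and $c_{3-j} L_m^{(j)}$. The discrete $\gamma^2$-contributions coalesce into
\begin{align*}
\mathcal{G}^\delta_{\kappa_j,c_j}\,(-1)^m\,c_j^{m-1}\,(c_{3-j}-c_j)\,\gamma^2,
\end{align*}
while the continuous-spectrum weight becomes $\mathcal{F}_m^\delta(k)\bigl(4\pi k\coth(4\pi\delta k)-\tfrac{1}{\delta}+c_{3-j}\bigr)$, which is strictly positive for all $k>0$ since $x\coth x>1$ for $x>0$ and $c_{3-j}>0$. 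Consequently the continuous piece is non-negative and vanishes only when $\alpha^{\pm}\equiv 0$, so the sign of $\langle \mathcal{T}_{m,j}z,z\rangle$ on the even subspace is fully governed by $(-1)^m(c_{3-j}-c_j)$. For $c_1<c_2$ and $m$ odd this coefficient is negative for $j=1$ (with $\tfrac{\partial Q_{c_1}}{\partial c_1}$ realizing the negative direction) and positive for $j=2$, yielding (ii); for $m$ even the signs flip, giving (iii); in each negative case the positivity on the hyperplane $\gamma=0$ ensures that exactly one negative eigenvalue appears.

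The main technical point is to check that the closure relation \eqref{3.39} and the orthogonality identities \eqref{3.35}, originally derived at a generic single-soliton profile $Q_c$, apply verbatim with $c\to c_j$; this is immediate because $Q_{c_j}$ is itself a single-soliton profile to which the entire spectral machinery of Section~\ref{sec.3} applies unchanged. Beyond this point the proof reduces to algebraic sign-tracking; the only genuinely subtle moment is the coincident-speed case $c_1=c_2$, where the coefficient of $\gamma^2$ vanishes and $\tfrac{\partial Q_c}{\partial c}$ migrates into the kernel to produce the double zero eigenvalue of (i).
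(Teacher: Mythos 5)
Your proposal is correct and follows essentially the same route as the paper: the key identity $\mathcal{T}_{m,j}\tfrac{\partial Q_{c_j}}{\partial c_j}=(-1)^m c_j^{m-1}(c_{3-j}-c_j)Q_{c_j}$ is exactly the paper's (\ref{3.46}), and the inertia count is obtained, as in the paper, by evaluating $\langle\mathcal{T}_{m,j}z,z\rangle$ on the decomposition (\ref{decom}) and tracking the sign of the $\gamma^2$-coefficient against the positive continuous-spectrum contribution. Your explicit use of the factorization $\mathcal{T}_{m,j}=(\mathcal{R}(Q_{c_j})+c_{3-j})L_m^{(j)}$ for the kernel statements is a minor elaboration consistent with the paper's framework.
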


\begin{proof}
	Similar to the proof of Lemma \ref{lem3.4}, we study quadratic form related to the operator $ \mathcal{T}_{m,j} $ with $ z $ possessing the decomposition (\ref{decom}). One can verify that
	\begin{equation}\label{3.46}
		\mathcal{T}_{m,j}\frac{\partial Q_{c_j}}{\partial c_j} = (c_j - c_k)(-c_j)^{m-1}Q_{c_j}, \quad \text{for} \quad k \neq j \quad \text{and} \quad j,k=1,2.
	\end{equation}
	
	In particular, if $ c_1 = c_2 = c $, the function $ \frac{\partial Q_c}{\partial c} $ belongs to the kernel of $ \mathcal{T}_{m,1} $ and $ \mathcal{T}_{m,2} $. Notice that $ Q_c' $ always belongs to the kernel of which, therefore, zero eigenvalue is double with eigenfunctions $ Q_c' $ and $ \frac{\partial Q_c}{\partial c} $. The non-negativeness of $ \mathcal{T}_{m,1} $ and $ \mathcal{T}_{m,2} $ follow from the same argument of Lemma \ref{lem3.4}.
	
	If $ c_1 < c_2 $, then by (\ref{3.46}) and following the same line of the proof of Lemma \ref{lem3.4}, the operator $ \mathcal{T}_{2k+1,1} $ has a negative eigenvalue and $ \mathcal{T}_{2k+1,2} \geq 0 $, their zero eigenvalue are simple with associated eigenfunction $ Q_{c_j}' $; the operator $ \mathcal{T}_{2k,1} \geq 0 $ and $ \mathcal{T}_{2k,2} $ has a negative eigenvalue.
\end{proof}
\begin{re}
	Base on the facts above, the linearized operator $ \mathcal{L}_{2} $ defined in (\ref{VL}) around the double solitons profile $ U^{(2)} $ possesses the following property. The spectra $ \sigma(\mathcal{L}_{2}) $ trends to the union of $ \sigma(\mathcal{T}_{1,1}) $ and $ \sigma(\mathcal{T}_{1,2}) $ as $ t $ goes to infinity. Since from Corollary \ref{coro3.3}, we know the inertia
	$\mathrm{in}(\mathcal{T}_{1,1}) = (1,1)$ and $\mathrm{in}(\mathcal{T}_{1,2}) = (0,1)$, then it reveals that $\mathrm{in}(\mathcal{L}_{2}) = \mathrm{in}(\mathcal{T}_{1,1}) + \mathrm{in}(\mathcal{T}_{1,2}) = (1,2)$.
\end{re}
\subsection{The spectrum of linearized operator around $n$-solitons}
In order to prove Theorem \ref{thm1.1}, we need to know the spectral information of the operator $\mathcal{L}_n$ (\ref{VL}).
\begin{lemma}\label{lem3.5}
	The operator $\mathcal{L}_n$ with domain $H^{\frac{n}{2}}(\mathbb{R})$ verifies the following spectral property
	\begin{align}\label{3.47}
	\text{in}(\mathcal{L}_n)=(n(\mathcal{L}_n), z(\mathcal{L}_n))=\left([\frac{n+1}{2}],n\right).
	\end{align}
\end{lemma}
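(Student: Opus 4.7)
The plan is to combine an iso-inertial principle for the self-adjoint family $\mathcal{L}_n(t) := \mathcal{S}_n''(U^{(n)}(t))$ with the soliton resolution (\ref{resolution}) to reduce the inertia computation to a disjoint union of single-soliton contributions $\mathrm{in}(L_{n,j})$, each of which I would read off from the factorization (\ref{3.1}) together with Lemma \ref{lem3.4}. Since $\mathcal{S}_n$ is a conserved quantity of the ILW flow, its Hessian along the evolving $n$-soliton is symplectically conjugate to $\mathcal{L}_n(0)$, so the Morse index and kernel dimension are $t$-independent. It therefore suffices to analyse $\mathcal{L}_n(t)$ in the limit $t\to\infty$.

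As $t\to\infty$, the continuity of $u\mapsto\mathcal{S}_n''(u)$ on $H^{\frac{n}{2}}(\mathbb{R})$ together with the asymptotic spatial separation of the constituent solitons in (\ref{resolution}) will yield, in the strong resolvent sense,
\begin{equation*}
\mathcal{L}_n(t)\;\longrightarrow\;\bigoplus_{j=1}^{n} L_{n,j},\qquad L_{n,j} := \mathcal{S}_n''(Q_{c_j}),
\end{equation*}
so the limiting spectrum decouples across solitons and $\mathrm{in}(\mathcal{L}_n) = \sum_{j=1}^{n}\mathrm{in}(L_{n,j})$ by the iso-inertial property.

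I would then determine $\mathrm{in}(L_{n,j})$ from the factorization
\begin{equation*}
L_{n,j} = \prod_{k\neq j}\bigl(\mathcal{R}(Q_{c_j}) + c_k\bigr)\, L_1^{(j)},\qquad L_1^{(j)} := H_2''(Q_{c_j}) + c_j H_1''(Q_{c_j}),
\end{equation*}
by decomposing any $z\in H^{\frac{n}{2}}(\mathbb{R})$ as in (\ref{decom}) with $Q=Q_{c_j}$, namely $z = \int_0^\infty F^\pm_x\,\alpha^\pm(k)\,\mathrm{d}k + \beta\, Q_{c_j,x} + \gamma\,\partial_{c_j} Q_{c_j}$. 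Applying $L_1^{(j)}$ termwise gives, by the identities already used in the proof of Lemma \ref{lem3.4}, $L_1^{(j)} z = \int_0^\infty \rho_{1,c_j}(\pm 2k) F^\pm \alpha^\pm\,\mathrm{d}k - \gamma\, Q_{c_j}$. Each factor $\mathcal{R}(Q_{c_j}) + c_k$ then acts diagonally on this image: by multiplication by $M(k) + c_k > 0$ on the $F^\pm$-modes (with $M(k) := -2(\lambda(k)+\tfrac{1}{2\delta})$ the positive multiplier of $\mathcal{R}(Q_{c_j})$ on its essential spectrum, cf.\ Lemma \ref{lem3.2}) and by multiplication by $c_k - c_j$ on $Q_{c_j}$. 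Using the biorthogonality relations (\ref{3.35})--(\ref{3.38}) I expect
\begin{equation*}
\langle L_{n,j} z, z\rangle = \int_0^\infty P_j(M(k))\,\mathcal{F}_1^\delta(k)\,|a(k)|^2|\alpha^\pm(k)|^2\,\mathrm{d}k \;-\; \mathcal{G}^\delta_{\kappa_j,c_j}\,\gamma^2\prod_{k\neq j}(c_k-c_j),
\end{equation*}
with $P_j(x) := \prod_{k\neq j}(x+c_k)$. The integral term is non-negative, while the sign of the $\gamma^2$ coefficient equals $(-1)^{j}$ under the ordering $c_1<\dots<c_n$, since $\prod_{k\neq j}(c_k - c_j)$ contains exactly $j-1$ negative factors. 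Hence $L_{n,j}$ has one negative eigenvalue (along $\partial_{c_j} Q_{c_j}$) when $j$ is odd and none when $j$ is even, and in both cases a one-dimensional kernel spanned by $Q_{c_j,x}$. Summing $(1,1)$ over the $\lfloor(n+1)/2\rfloor$ odd indices and $(0,1)$ over the even indices yields (\ref{3.47}).

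The hard part will be a clean justification of the asymptotic decoupling: the nonlocal dispersive operator $T^\delta$ decays only algebraically, so writing $\mathcal{L}_n(t)$ as $\bigoplus_j L_{n,j}$ modulo a remainder vanishing in the appropriate operator topology demands careful quantitative control of the interaction tails between well-separated solitons. One must then invoke upper semicontinuity of the Morse index together with lower semicontinuity of the kernel dimension under strong resolvent convergence -- or equivalently exhibit explicit finite-dimensional negative and null subspaces at $t=0$ that match the asymptotic count -- in order to transport the inertia computed at $t=\infty$ back to $t=0$ without losing discrete eigenvalues into the essential spectrum.
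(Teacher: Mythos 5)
Your proposal follows essentially the same route as the paper: invariance of the inertia of $\mathcal{L}_n(t)$ under the flow, asymptotic decoupling of the spectrum into $\bigcup_j \sigma(L_{n,j})$ via the soliton resolution, and then the computation of $\mathrm{in}(L_{n,j})$ from the factorization (\ref{3.1}) and the quadratic form in the decomposition (\ref{decom}), with the sign of the $\gamma^2$ coefficient $-\mathcal{G}^\delta_{\kappa_j,c_j}\prod_{k\neq j}(c_k-c_j)$ determining the parity dichotomy exactly as in Proposition \ref{prop3.3}. The quantitative justification of the decoupling that you flag as the hard part is indeed not carried out in the paper either, so your account matches the paper's argument in both substance and level of rigor.
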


To this aim, for $ j=1,2,\ldots,n $, recall that $ L_{n,j}=S_{n}^{\prime\prime}(Q_{c_{j}}) $ is defined in (\ref{3.1}). The spectrum of $ \mathcal{L}_{n} $ tends to the unions of $ \sigma(L_{n,j}) $, that is
\[
\sigma(\mathcal{L}_{n}) \to \bigcup_{j=1}^{n}\sigma(L_{n,j}) \quad \text{as} \quad t\to+\infty.
\]
The result (\ref{3.47}) follows directly from the following statement which concerning the inertia of the operators $ L_{n,j} $, $ j=1,2,\cdots,n $.

\begin{prop}\label{prop3.3}
The operators $ L_{n,2k-1}$ and $ L_{n,2k} $ has following spectral properties.
\begin{itemize}
	\item[(i)] $ L_{n,2k-1} $ (defined in $ L^{2}(\mathbb{R}) $ with domain $ H^{n}(\mathbb{R}) $) has zero as a simple eigenvalue and exactly one negative eigenvalue for $ 1\leq k\leq\left[\frac{n+1}{2}\right] $, i.e,
	\[
	\mathrm{in}(L_{n,2k-1}) = (1,1);
	\]
	
	\item [(ii)]$ L_{n,2k} $ (defined in $ L^{2}(\mathbb{R}) $ with domain $ H^{n}(\mathbb{R}) $) has zero as a simple eigenvalue and no negative eigenvalues for $ 1\leq k\leq\left[\frac{n}{2}\right] $, i.e,
	\[
	\mathrm{in}(L_{n,2k}) = (0,1).
	\]
\end{itemize}
\end{prop}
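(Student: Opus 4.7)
The plan is to reduce the inertia computation for $L_{n,j}$ to a quadratic-form calculation in the spirit of Lemma \ref{lem3.4}, leveraging the factorization (\ref{3.1}) that writes $L_{n,j} = P(\mathcal{R}(Q_{c_j}))\,L_1$, where $P(x) := \prod_{k \neq j}(x + c_k)$ is a polynomial of degree $n-1$ and $L_1 := H_2''(Q_{c_j}) + c_j H_1''(Q_{c_j})$ is the base operator whose spectrum has already been analyzed.

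The first step is to decompose any test function $z \in H^n(\mathbb{R})$ in the basis $\{F_x^\pm(x,k),\, Q_{c_j}',\, \partial Q_{c_j}/\partial c_j\}$ from (\ref{decom}), obtaining coefficients $\alpha^\pm(k),\beta,\gamma$. Applying $L_1$ first, by (\ref{3.27})--(\ref{3.28}) together with the identities underlying Lemma \ref{lem3.4}, one has $L_1 Q_{c_j}' = 0$, $L_1 F_x^\pm(x,k) = \varrho_{1,c_j}(\pm 2k)\,F^\pm(x,k)$, and $L_1(\partial Q_{c_j}/\partial c_j) = -Q_{c_j}$. Now by Lemma \ref{lem3.2}, $F^\pm(x,k)$ and $Q_{c_j}$ are eigenfunctions of $\mathcal{R}(Q_{c_j})$ with eigenvalues $-2(\lambda + (2\delta)^{-1}) \geq 0$ and $-c_j$, respectively, so $P(\mathcal{R}(Q_{c_j}))$ acts on them by the strictly positive scalar $P(-2(\lambda + (2\delta)^{-1})) > 0$ (each factor is positive since $c_k > 0$) and by $P(-c_j) = \prod_{k \neq j}(c_k - c_j)$.

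Combining these two steps and invoking the bi-orthogonality relations (\ref{3.35})--(\ref{3.38}) together with the closure formula (\ref{3.39}) to eliminate all cross contributions, the quadratic form reduces to
\begin{align*}
\langle L_{n,j} z, z \rangle = -\mathcal{G}^\delta_{\kappa_j, c_j}\,\gamma^2 \prod_{k \neq j}(c_k - c_j) + 4\pi \int_0^\infty \varrho_{1,c_j}(\pm 2k)\, P(-2\lambda - \delta^{-1})\,|a(k)|^2\,|\alpha^\pm(k)|^2\, dk.
\end{align*}
The integral piece is strictly positive whenever $\alpha^\pm \not\equiv 0$, and $\mathrm{sgn}\bigl(\prod_{k \neq j}(c_k - c_j)\bigr) = (-1)^{j-1}$ since exactly $j-1$ factors are negative (those with $c_k < c_j$). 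Hence, for $j = 2k-1$ the $\gamma^2$-coefficient is strictly negative, producing exactly one negative eigenvalue along the direction $\partial Q_{c_j}/\partial c_j$; for $j = 2k$ the $\gamma^2$-coefficient is strictly positive and no negative eigenvalue arises. Inspecting $L_{n,j} z$ in the same decomposition shows that $L_{n,j} z = 0$ forces $\alpha^\pm \equiv 0$ and $\gamma = 0$, so the kernel is spanned exactly by $Q_{c_j}'$, yielding $\mathrm{in}(L_{n,2k-1}) = (1,1)$ and $\mathrm{in}(L_{n,2k}) = (0,1)$.

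The main obstacle is the rigorous handling of the fact that $\partial Q_{c_j}/\partial c_j$ is not an eigenfunction of $\mathcal{R}(Q_{c_j})$ but sits in a Jordan-type relation $\mathcal{R}(Q_{c_j})\,(\partial Q_{c_j}/\partial c_j) = -Q_{c_j} - c_j\,(\partial Q_{c_j}/\partial c_j)$. One must verify that, when $P(\mathcal{R}(Q_{c_j}))$ is applied to $L_1 z$, the resulting mixing between $Q_{c_j}$ and $\partial Q_{c_j}/\partial c_j$ is absorbed cleanly by the bi-orthogonal closure (\ref{3.39}), so that the quadratic form indeed collapses to the displayed expression and no spurious term distorts the sign analysis; this parallels, and extends, the base case $n=1$ carried out in Lemma \ref{lem3.4}.
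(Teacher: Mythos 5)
Your proposal is correct and follows essentially the same route as the paper: decompose $z$ in the squared-eigenfunction basis (\ref{decom}), reduce $\langle L_{n,j}z,z\rangle$ to a positive continuous-spectrum integral plus the discrete term $\mathcal{G}^{\delta}_{\kappa_j,c_j}\gamma^{2}\Gamma_j$ with $\Gamma_j=-\prod_{k\neq j}(c_k-c_j)$, and read off the inertia from the sign $(-1)^{j}$ of $\Gamma_j$; the paper simply states the key identity $L_{n,j}\,\partial Q_{c_j}/\partial c_j=\Gamma_j Q_{c_j}$ as (\ref{3.48}) rather than deriving it through $P(\mathcal{R}(Q_{c_j}))$ as you do. The ``main obstacle'' you flag at the end is in fact moot in your own setup: since you apply $L_1$ first, $P(\mathcal{R}(Q_{c_j}))$ only ever acts on $Q_{c_j}$ and $F^{\pm}$, which are genuine (generalized) eigenfunctions, so the Jordan relation for $\partial Q_{c_j}/\partial c_j$ never enters.
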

\begin{proof}
	The proof follows the same line of the proof of Lemma \ref{lem3.4}. We consider the operator $ L_{n,j}= S_{n}^{\prime\prime}(Q_{c_{j}}) $ for $1\leq j\leq n$ and compute the quadratic form $\langle L_{n,j}z,z\rangle$ under a special decomposition of $z$ in (\ref{decom}). Recall from (\ref{3.1}) that the form of $L_{n,j}$ which is a combination of the operators $H_{m+1}^{\prime\prime}(Q_{c_{j}})+c_{j}H_{m}^{\prime\prime}(Q_{c_{j}}),$ and those $\sigma_{j,k}>0$ are the elementally symmetric functions of $c_{1},c_{2},\cdots,c_{j-1},c_{j+1},\cdots,c_{n}.$ Moreover, one has
	\begin{align}\label{3.48}
		L_{n,j}\frac{\partial Q_{c_{j}}}{\partial c_{j}} = -\prod_{k\neq j}^{n}(c_{k}-c_{j})Q_{c_{j}} := \Gamma_{j}Q_{c_{j}}.
	\end{align}
		The quadratic form $\langle L_{n,j}z,z\rangle$ (for $ z\in H^{\frac{n}{2}}(\mathbb{R}) $ ) can be evaluated similar to (\ref{3.41}) as follows
	\begin{align*}
		\langle L_{n,j}z,z\rangle
		= 4\pi\sum_{m=1}^{n} \int_{0}^{\infty} \mathcal{F}_m^\delta (k)|a(k)|^2 |\alpha^\pm(k)|^2 \text{d}k + \mathcal{G}^\delta_{\kappa,c}\gamma^{2}\Gamma_{j}.
	\end{align*}
One can check that the symbol of the principle part of $ L_{m,j} $ evaluated at $ 2k$ is
	\begin{align}
		S_{n}^{\prime\prime}(0)(2k) = \sum_{m=1}^{n}\sigma_{j,n-m}\varrho_{m,c_{j}}(2k) > 0.
	\end{align}
	Then the first term of the quadratic form $ \langle L_{n,j}z,z\rangle $ is nonnegative and equals to zero if and only if $ \alpha^\pm(k)=0. $
	
	If $j$ is even, then in view of the definition of $ \Gamma_{j} $ (\ref{3.48}), one has $ \Gamma_{j}>0 $ and $ \langle L_{n,j}z,z\rangle\geq 0 $ and $ \langle L_{n,j}z,z\rangle=0 $ if and only if $ \alpha^\pm(k)=0 $ and $ \gamma=0 $, which indicates that $ z=\beta Q^{\prime}_{c_{j}}. $ Hence $ L_{n,j}\geq 0 $ and zero is simple with associated eigenfunction $ Q^{\prime}_{c_{j}}. $
	
	If $j$ is odd, then one has $ \Gamma_{j}<0 $, we investigate $z$ in $ H_{ev}^{\frac{n}{2}}(\mathbb{R}) $ and $ H_{od}^{\frac{n}{2}}(\mathbb{R}) $, respectively.
	
	If $ z\in H_{od}^{\frac{n}{2}}(\mathbb{R}) $, then $ \gamma=0. $ Then one has $ \langle L_{n,j}z,z\rangle\geq 0 $ and $ \langle L_{n,j}z,z\rangle=0 $ if and only if $ \alpha^\pm(k)=0. $ Then $ z=\beta Q^{\prime}_{c_{j}} $ with $ \beta\neq 0 $, which indicates that zero is simple with associated eigenfunction $ Q^{\prime}_{c_{j}}. $
	
	If $ z\in H_{ev}^{\frac{n}{2}}(\mathbb{R}) $, then $ \beta=0. $ In the hyperplane $ \gamma=0 $, $ \langle L_{n,j}z,z\rangle\geq 0 $ and $ \langle L_{n,j}z,z\rangle=0 $ if and only if $ \alpha^\pm(k)=0. $ Therefore, $ \langle L_{n,j}z,z\rangle>0 $ in the hyperplane $ \gamma=0 $ and which implies that $ L_{n,j} $ can have at most one negative eigenvalue. Since
	\[
	L_{n,j}\frac{\partial Q_{c_{j}}}{\partial c_{j}} = \Gamma_{j}Q_{c_{j}} < 0
	\]
	and
\begin{align}\label{n=1}
	\Bigg\langle L_{n,j}\frac{\partial Q_{c_{j}}}{\partial c_{j}}, \frac{\partial Q_{c_{j}}}{\partial c_{j}} \Bigg\rangle = \Gamma_{j}\frac{dH_{1}(Q_{c_{j}})}{dc_{j}} < 0,
\end{align}
	therefore, $ L_{n,j} $ has exactly one negative eigenvalue. This implies the desired result as advertised in the statement of Proposition \ref{prop3.3}.
\end{proof}

\begin{proof}[Proof of Lemma \ref{lem3.5}]
	From the invariance of inertia of $ \mathcal{L}_{n} $, we know that
	\[
	\mathrm{in}(\mathcal{L}_{n}) = \sum_{j=1}^{n} \mathrm{in}(\mathcal{L}_{n,j}) = \left( \left\lfloor \frac{n+1}{2} \right\rfloor, n \right).
	\]
	The proof is concluded.
\end{proof}
\begin{re}
	In view of (\ref{3.5}) and (\ref{3.6}), one may also investigate the spectrum of the operator $ \mathcal{J}\mathcal{L}_{n} $ to show the spectral stability of the ILW $n$-solitons and then the spectrum of the operator $ \mathcal{L}_{n} $. The idea is conducted by employing the eigenvalue problem (\ref{lax3}), we can derive the eigenvalues and the associated eigenfunctions of the recursion operator around the $n$-solitons profile $ U^{(n)}(x) $. Then we need to show the eigenfunctions plus their derivatives with respect to the eigenvalues $ k_{j}=i\kappa_j $ ($j=1,2,\cdots,n$) form a basis in $ L^{2}(\mathbb{R}) $. Finally, by a direct verification of the quadratic form $ \langle\mathcal{L}_{n}z,z\rangle $ (with function $z$ decomposes upon the above bases), one can also derive the inertia of the operator $ \mathcal{L}_{n} $. In fact, we can show the following
	\begin{align*}
		n(\mathcal{L}_{n})= -\sum_{1 \leq j=2k-1 \leq n} \operatorname{sgn}\left( \left\langle \mathcal{L}_{n}\frac{\partial U^{(n)}}{\partial c_{j}},\frac{\partial U^{(n)}}{\partial c_{j}} \right\rangle \right)
		= \left[\frac{n+1}{2}\right],
	\end{align*}
 for $k=1,2,\cdots,\left[\frac{n+1}{2}\right]$. This formula reveals that the negative eigenvalues of $ \mathcal{L}_{n} $ are generated by the directions $ \frac{\partial U^{(n)}}{\partial c_{j}} $ for odd $ j=1,3,\cdots,2\left[\frac{n+1}{2}\right]-1 $.

\end{re}
\section{Proof of the main results}\label{sec.4}
The $n$-soliton solution $U^{(n)}(t,x)$ does not minimize $\mathcal{S}_n$ globally but constitutes a constrained, non-isolated critical point for the optimization problem
\[
\min H_{n+1}(u(t)) \quad \text{subject to} \quad H_j(u(t)) = H_j(U^{(n)}(t)), \quad j = 1,2,\dots,n.
\]

Consider the self-adjoint operator $\mathcal{L}_n(t)$ defined by (\ref{1.22}), whose second variation properties are essential. Denote by $n(\mathcal{L}_n(t))$ the count of its negative eigenvalues. These quantities exhibit potential time-dependence. The $n \times n$ Hessian matrix is defined as
\begin{align}\label{4.1}
	D(t) := \left\{ \frac{\partial^2 \mathcal{S}_n(U^{(n)}(t))}{\partial \mu_i \partial \mu_j} \right\},
\end{align}
with $p(D(t))$ representing its positive eigenvalue count. Since $\mathcal{S}_n(t)$ remains invariant under ILW dynamics, $D(t)$ is time-independent.
\subsection{Dynamical stability of $n$-soliton solutions: Proof of theorem \ref{thm1.1}}
The proof of Theorem \ref{thm1.1} depends on the following fundamental result originally formulated by Maddocks and Sachs \cite{MS}. For convenience, we give a detailed proof here.
\begin{prop}\label{prop4.1}
	Assuming the spectral condition
\begin{align}\label{4.2}
		n(\mathcal{L}_n) = p(D)
\end{align}
	holds, then for some $C > 0$, the multi-soliton $U^{(n)}$ constitutes a non-degenerate unconstrained minimum of the augmented Lagrangian
\begin{align}\label{4.3}
		\Delta(u) := \mathcal{S}_n(u) + \frac{C}{2} \sum_{j=1}^{n} \left( H_j(u) - H_j(U^{(n)}) \right)^2.
\end{align}
As a consequence, $U^{(n)}(t,x)$ is dynamically stable.
\end{prop}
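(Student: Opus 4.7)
My plan is to verify, in sequence, the four ingredients that yield dynamical stability from the augmented Lagrangian framework: (i) $U^{(n)}$ is a critical point of $\Delta$; (ii) the second variation $\Delta''(U^{(n)})$ is positive semi-definite for $C$ large enough, with kernel exactly the $n$-dimensional translation space; (iii) $\Delta$ is conserved along the ILW flow; and (iv) combining (ii) and (iii) produces $H^{n/2}$-closeness to the full symmetry orbit of $U^{(n)}$.

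The first step is immediate. Since each $H_j$ is an invariant of (\ref{ILW}), so is $\mathcal{S}_n$, and hence so is $\Delta$. At $u=U^{(n)}$ the penalty terms in (\ref{4.3}) vanish together with their first variations, and the Euler-Lagrange identity (\ref{1.20}) gives $\mathcal{S}_n'(U^{(n)})=0$, so that $\Delta'(U^{(n)})=0$. A direct Taylor expansion then yields
\begin{equation*}
\Delta''(U^{(n)})[h,h] \;=\; \langle \mathcal{L}_n h, h\rangle \,+\, C\sum_{j=1}^n \langle H_j'(U^{(n)}), h\rangle^{2},
\end{equation*}
so $\Delta''(U^{(n)})$ is a rank-$n$ positive perturbation of $\mathcal{L}_n$.

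The heart of the argument is step (ii). By Lemma \ref{lem3.5} the operator $\mathcal{L}_n$ has exactly $\lfloor (n+1)/2\rfloor$ negative eigenvalues and an $n$-dimensional kernel (the translation modes). I would decompose $H^{n/2}(\mathbb{R})=T\oplus T^{\perp}$, where $T=\{h:\langle H_j'(U^{(n)}),h\rangle=0,\ j=1,\dots,n\}$ is the tangent space to the level set of the constraints. Differentiating the Euler-Lagrange equation (\ref{1.20}) in each Lagrange multiplier gives $\mathcal{L}_n(\partial U^{(n)}/\partial\mu_j)+H_j'(U^{(n)})=0$, from which one recognizes the identity $D_{ij}=-\langle H_i'(U^{(n)}),\mathcal{L}_n^{-1}H_j'(U^{(n)})\rangle$, interpreted via the Moore-Penrose pseudoinverse on $(\ker\mathcal{L}_n)^{\perp}$; translation invariance of the $H_j$ ensures $H_j'(U^{(n)})\perp\ker\mathcal{L}_n$, so the identity is well defined. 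A Sylvester/Schur-complement inertia identity then yields $n(\mathcal{L}_n|_T)=n(\mathcal{L}_n)-p(D)$, and the hypothesis (\ref{4.2}) forces this to vanish, so $\mathcal{L}_n|_T\ge 0$ with kernel equal to the translations. On the complementary $n$-dimensional direction spanned by $H_j'(U^{(n)})$ the penalty term $C\sum_j\langle H_j',h\rangle^{2}$ grows linearly in $C$, so choosing $C$ large enough to dominate the uniformly bounded indefinite contribution of $\mathcal{L}_n$ on that finite-dimensional subspace yields coercivity of $\Delta''(U^{(n)})$ modulo the translation kernel.

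Once coercivity is in place, step (iv) is the standard Lyapunov-Grillakis-Shatah-Strauss scheme: since $\Delta$ is conserved, an initial datum $u_0$ that is $H^{n/2}$-close to $U^{(n)}(0,\cdot)$ produces $\Delta(u(t))=\Delta(u_0)$ close to $\Delta(U^{(n)})$, and the coercivity estimate, combined with a modulation argument that quotients the $n$ translation directions, forces $u(t)$ to remain within a prescribed $H^{n/2}$-tube of the $n$-parameter orbit $\{U^{(n)}(\tau,\cdot;\mathbf{c},\mathbf{y})\}$. I expect the genuine difficulty to be the Schur-complement / inertia identity in the presence of a non-trivial $n$-dimensional kernel and essential spectrum $[0,+\infty)$ (cf.\ the remark after Lemma \ref{lem3.4}): one must carefully project out $\ker\mathcal{L}_n$ before inverting, control the pseudoinverse as a bounded operator on $(\ker\mathcal{L}_n)^{\perp}$ in the functional setting of $H^{n/2}(\mathbb{R})$, and verify that $D$ is non-degenerate so that $p(D)$ is well-defined along the full soliton manifold, a point for which the explicit Vieta structure (\ref{Vieta}) of the multipliers $\mu_m$ should provide the required computational leverage.
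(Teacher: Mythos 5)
Your proposal is correct and follows essentially the same route as the paper, which in fact gives no proof of Proposition \ref{prop4.1} at all but defers entirely to Maddocks--Sachs \cite{MS}: the critical-point identity $\Delta'(U^{(n)})=0$, the bordered-Hessian/Schur-complement inertia count $n(\mathcal{L}_n|_T)=n(\mathcal{L}_n)-p(D)$ via $D_{ij}=-\langle H_i'(U^{(n)}),\mathcal{L}_n^{-1}H_j'(U^{(n)})\rangle$, the large-$C$ penalty on the normal directions, and conservation of $\Delta$ are exactly the ingredients being invoked. The one delicate point you rightly flag --- that the essential spectrum of $\mathcal{L}_n$ reaches down to $0$ (cf.\ the remark after Lemma \ref{lem3.4}), so that non-negativity on $T$ does not immediately upgrade to $H^{n/2}$-coercivity without a G\aa rding-type argument --- is a genuine subtlety, but it is one the paper inherits unaddressed from \cite{MS} rather than a defect specific to your reconstruction.
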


Therefore, to complete the proof of Theorem \ref{thm1.1}, it is sufficient to verify (\ref{4.2}). We start with the count of the number of positive eigenvalues of the Hessian matrix $D$, the results are as follows.
\begin{lemma}\label{lem4.1}
	For all $\mathbf{c} = (c_1, \ldots, c_n)$, $\mathbf{x} = (x_1, \ldots, x_n)$ with $0 < c_1 < \cdots < c_n$, we have
	\[
	p(D) = \left\lfloor \frac{n+1}{2} \right\rfloor.
	\]
\end{lemma}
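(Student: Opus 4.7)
The plan is to realize the Hessian $D$ as a symmetric congruence $D=\Phi W\Phi^{T}$ with an invertible Vandermonde factor $\Phi$ and a diagonal weight matrix $W$, so that Sylvester's law of inertia reduces the count of positive eigenvalues of $D$ to a sign count for the diagonal entries of $W$.

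The first step is to identify $D_{ij}$ concretely. Writing $F(\mu):=\mathcal{S}_{n}(U^{(n)}(\mu))$ and applying the chain rule, all the contributions arising from the $\mu$-dependence of $U^{(n)}$ collapse into a single integral against $H_{n+1}'(U^{(n)})+\sum_{m}\mu_{m}H_{m}'(U^{(n)})$, which vanishes by the Euler--Lagrange equation (\ref{1.20}). Hence $\partial F/\partial\mu_{i}=H_{i}(U^{(n)})$ and so $D_{ij}=\partial H_{i}(U^{(n)})/\partial\mu_{j}$, which is manifestly symmetric. Since the $n$-soliton is reflectionless, the trace formula (\ref{2.20}) yields the additivity $H_{i}(U^{(n)})=\sum_{l}H_{i}(Q_{c_{l}})$, and iterating the recursion (\ref{2.25})--(\ref{2.26}) gives the single-soliton identity $\tfrac{d}{dc}H_{i}(Q_{c})=(-c)^{i-1}\mathcal{G}^{\delta}_{\kappa,c}$. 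Combining them,
\[
\frac{\partial H_{i}(U^{(n)})}{\partial c_{k}}=(-c_{k})^{i-1}g_{k},\qquad g_{k}:=\mathcal{G}^{\delta}_{\kappa_{k},c_{k}}>0.
\]

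Next, I implicitly differentiate the Vieta identity (\ref{Vieta}), viewed as the statement that $-c_{k}$ is a root of a monic polynomial of degree $n$ whose subleading coefficients are the $\mu_{j}$'s. Using that the derivative of this polynomial at $-c_{k}$ equals $\prod_{l\neq k}(c_{l}-c_{k})$, a standard implicit differentiation gives
\[
\frac{\partial c_{k}}{\partial\mu_{j}}=\frac{(-c_{k})^{j-1}}{\prod_{l\neq k}(c_{l}-c_{k})}.
\]
The chain rule then yields the clean factorization
\[
D_{ij}=\sum_{k=1}^{n}(-c_{k})^{i-1}(-c_{k})^{j-1}w_{k},\qquad w_{k}:=\frac{g_{k}}{\prod_{l\neq k}(c_{l}-c_{k})},
\]
that is, $D=\Phi W\Phi^{T}$ with $\Phi_{ik}:=(-c_{k})^{i-1}$. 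The matrix $\Phi$ is Vandermonde with nonzero determinant $\prod_{i<k}(c_{i}-c_{k})$ by the assumption of distinct speeds.

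Sylvester's law of inertia then ensures that $D$ and $W$ have the same inertia. Since $g_{k}>0$, the sign of $w_{k}$ equals the sign of $\prod_{l\neq k}(c_{l}-c_{k})$; under the ordering $c_{1}<\cdots<c_{n}$ exactly $k-1$ factors in this product are negative, so $\operatorname{sgn}(w_{k})=(-1)^{k-1}$. The positive diagonal entries of $W$ therefore correspond to the odd indices $k\in\{1,\dots,n\}$, whose cardinality is $\lfloor(n+1)/2\rfloor$. This produces $p(D)=\lfloor(n+1)/2\rfloor$. The main obstacle I anticipate is purely combinatorial, namely aligning the Vieta index convention of (\ref{Vieta}) with the implicit differentiation in the third step so that the exponent $j-1$ appears (rather than $n-j$); once this bookkeeping is settled the Vandermonde congruence is forced and the sign count is immediate.
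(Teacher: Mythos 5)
Your proposal is correct and follows essentially the same route as the paper: both compute $D_{ij}$ via the envelope theorem and the chain rule through $\partial c_k/\partial\mu_i$ and $\partial H_j(Q_{c_k})/\partial c_k=(-c_k)^{j-1}\mathcal{G}^{\delta}_{\kappa_k,c_k}$, then invoke Sylvester's law of inertia to reduce $p(D)$ to counting the signs of $\prod_{l\neq k}(c_l-c_k)$, which alternate and give $\lfloor(n+1)/2\rfloor$ positive entries. Your explicit symmetric congruence $D=\Phi W\Phi^{T}$ with a Vandermonde factor is in fact a cleaner justification of the Sylvester step than the paper's $D=AB$ factorization, and the index-convention issue you flag resolves in favor of the exponent $j-1$: that choice is the one forced by the symmetry of the Hessian $D$ and is the convention actually used in the paper's expression $\mathcal{S}_2=H_3+(c_1+c_2)H_2+c_1c_2H_1$.
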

\begin{proof}
	Let $t$ be fixed. For the convenience of notation, we will omit the dependence on $t$ in the proof since the results are independent of $t$ in any case. For all $1\leq i,j\leq n$, we have
	\begin{align}
		D_{ij}=\frac{\partial^2 \mathcal{S}_n}{\partial\mu_i \partial \mu_j}=\sum_{k=1}^{n}\frac{\partial c_k}{\partial \mu_i}\frac{\partial H_j}{\partial c_k},
	\end{align}
Here we utilize such a fact that
\begin{align}
	\frac{\partial \mathcal{S}_n}{\partial \mu_j}=H_j(U^{(n)}).
\end{align}
We can observe that $D$ can be expressed as the product of two matrices, namely,
\begin{align}
	D=AB,\quad A=(\frac{\partial c_j}{\partial \mu_i}), \quad B=(\frac{\partial H_j}{\partial c_i}).
\end{align}
For each $Q_{c_{j}}$ that constitutes the asymptotic form of the multi-solitons $U^{(n)}$, the value of $\frac{\partial H_j}{\partial c_i}$ is clearly known (see (\ref{2.30})), therefore we have
\begin{align}
	\frac{\partial H_j}{\partial c_i}=(-1)^{j-1}\frac{\mathcal{G}^\delta_{\kappa_j,c_j}}{c_i} {c_i}^j.
\end{align}
We can easily find that the matrix $B$ is invertable and $ B^{T}MB = B^{T}A $, by Sylvester's law of inertia, to find the number of positive eigenvalues of $D$ it suffices to consider the number of positive eigenvalues of the matrix $ (-1)^{n}B^{T}A $, we next evaluate this matrix product explicitly and observe that the answer is a diagonal matrix with entries of alternating sign, which facts follow immediately from the binomial expansion. The diagonal $(j, j)$ entry is
\begin{align}
	(-1)^{n+1}\frac{\mathcal{G}^\delta_{\kappa_j,c_j}}{c_j}\prod_{k\neq j}(c_{k}-c_{j}),
\end{align}
with all off-diagonal entries zero. With the assumed ordering of the speeds $ c_{j} $ these diagonal entries are of alternating sign, with $ \left\lfloor \frac{n+1}{2} \right\rfloor $ positive entries.
\end{proof}
\begin{proof}[\textbf{Proof of Theorem \ref{thm1.1}}]
	By Lemma \ref{lem4.1} and Lemma \ref{lem3.5}, one has that
	\[
	n(\mathcal{L}_{n}) = p(D) = \left\lfloor \frac{n+1}{2} \right\rfloor.
	\]
	The proof of Theorem \ref{thm1.1} is obtained directly in view of Proposition \ref{prop4.1}, since $U^{(n)}(t,x)$ is now an (non-isolated) unconstrained minimizer of the augmented Lagrangian (\ref{4.3}) which therefore serves as a Lyapunov function.
\end{proof}
\subsection{Orbital stability of double solitons: Proof of theorem \ref{thm1.2}}
In this section, our attention is now turned to the proof of Theorem \ref{thm1.2}. We need to prove a coercivity property on the Hessian of action related to the double-solitons profile $U$
\[
\mathcal{S}_{2}(U) = H_{3}(U) + (c_{1} + c_{2}) H_{2}(U) + c_{1}c_{2}H_{1}(U).
\]

As showed in Corollary \ref{coro3.3}, the linearized operator $ \mathcal{L}_{2} $ ((\ref{1.22}) with $ n=2 $) around the ILW double soliton possesses only one negative eigenvalue and the inertia of $ \mathcal{L}_{2} $ equals to $(1, 2)$. It is natural to verify that the kernel of $ \mathcal{L}_{2} $ is spanned by functions $ U_{(j)}:=\partial_{x_{j}}U $, where $ j=1,2 $ and $ x_{j} $ the spatial transitions. Let $ U_{-1} $ be an eigenfunction associated to the unique negative eigenvalue of the operator $ \mathcal{L}_{2} $, as stated in Corollary \ref{coro3.3}. We assume that $ \|U_{-1}\|_{L^{2}}=1 $. Then $ U_{-1} $ is unique and one has $ \mathcal{L}_{2}U_{-1}=-\lambda_{0}^{2}U_{-1} $ with $ -\lambda_{0}^{2} $ the associated negative eigenvalue. It is now easy to see the following result holds.
\begin{lemma}\label{lem4.2}
	Let $U$ be the ILW double-solitons with wave velocities $0<c_1<c_2$, and $U_{(1)},U_{(2)}$ be in the corresponding kernel of the operator $\mathcal{L}_2$. There exists $v_1>0$ depending on $c_1$ and $c_2$ only, such that for any $z\in H^1(\mathbb{R})$ satisfying the following orthogonality conditions
\begin{align}
		(z,U_{-1})_{L^2} = (z,U_{(1)})_{L^2} = (z,U_{(2)})_{L^2} = 0,
\end{align}
	then we have $\langle\mathcal{L}_2 z,z\rangle \geq v_1\|z\|_{H^1}^2$.
\end{lemma}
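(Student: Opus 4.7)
The plan is a standard two-move spectral coercivity argument: first an $L^{2}$ lower bound on the constrained subspace, coming from the fact that the essential spectrum of $\mathcal{L}_{2}$ is bounded below by a positive constant, and then an upgrade to an $H^{1}$ lower bound via a G\aa rding-type inequality produced by the positive principal symbol.

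To locate the essential spectrum, I would freeze $U^{(2)} \to 0$ and study the resulting constant-coefficient operator $\mathcal{L}_{\infty}$. Using the recursion $H_{3}'(u) = \mathcal{R}(u) H_{2}'(u)$ together with the identity $H_{2}'(0) = 0$, one obtains $H_{3}''(0) = \mathcal{R}(0) H_{2}''(0)$ and $\mathcal{R}(0) = H_{2}''(0) = -T^{\delta}\partial_{x} - \tfrac{1}{\delta}$, so the factorization inherited from (\ref{3.1}) reduces to
\begin{align*}
\mathcal{L}_{\infty} = \bigl(H_{2}''(0) + c_{1}\bigr)\bigl(H_{2}''(0) + c_{2}\bigr),
\end{align*}
whose Fourier symbol equals $(p(\xi) - \tfrac{1}{\delta} + c_{1})(p(\xi) - \tfrac{1}{\delta} + c_{2})$ with $p(\xi) := 2\pi\xi\coth(2\pi\delta\xi) \geq \tfrac{1}{\delta}$. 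This symbol is bounded below by $c_{1}c_{2} > 0$ and grows like $\tfrac{3}{4}(2\pi\xi)^{2}$ at high frequencies. The algebraic spatial decay of $U^{(2)}$ renders $\mathcal{L}_{2} - \mathcal{L}_{\infty}$ a relatively compact perturbation of the principal part (products of a decaying coefficient with an operator of order $\leq 1$), and Weyl's theorem then yields $\sigma_{\mathrm{ess}}(\mathcal{L}_{2}) = [c_{1}c_{2}, +\infty)$.

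Combined with Corollary \ref{coro3.3}, which accounts for the full non-positive spectrum of $\mathcal{L}_{2}$ as $\{-\lambda_{0}^{2}\}$ (simple, eigenfunction $U_{-1}$) and $\{0\}$ (double, kernel $\mathrm{span}\{U_{(1)}, U_{(2)}\}$), the spectral theorem delivers on the $L^{2}$-orthogonal complement of $\mathrm{span}\{U_{-1}, U_{(1)}, U_{(2)}\}$ the $L^{2}$ coercivity
\begin{align*}
\langle \mathcal{L}_{2} z, z\rangle \geq m\|z\|_{L^{2}}^{2},
\end{align*}
where $m > 0$ is the minimum of $c_{1}c_{2}$ and the smallest positive eigenvalue of $\mathcal{L}_{2}$ lying below $c_{1}c_{2}$ (only finitely many, necessarily bounded away from $0$ since $0$ is an isolated point of $\sigma(\mathcal{L}_{2})$). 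The positive principal symbol also yields the G\aa rding inequality $\langle \mathcal{L}_{2} z, z\rangle \geq c_{G}\|z\|_{H^{1}}^{2} - C_{G}\|z\|_{L^{2}}^{2}$ for every $z \in H^{1}(\mathbb{R})$, with $c_{G}, C_{G} > 0$ depending only on $c_{1}, c_{2}, \delta$. A convex combination of the two inequalities absorbs the negative $L^{2}$ term and produces the claim with $v_{1} := \min(c_{G}, m)/(2 + C_{G}/m) > 0$.

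The main obstacle is justifying the relative compactness of $\mathcal{L}_{2} - \mathcal{L}_{\infty}$. Unlike the KdV setting, the ILW soliton profile decays only algebraically, so verifying that multiplication by $U^{(2)}$ (or its $x$-derivatives) composed with an operator of order $\leq 1$ is compact from $H^{2}$ to $L^{2}$ requires combining Rellich compactness on bounded spatial windows with a uniform tail estimate. Should this route prove technically awkward, a robust fallback is a concentration-compactness contradiction: assume the coercivity fails, normalize $\|z_{n}\|_{H^{1}} = 1$ and use G\aa rding to deduce $\|z_{n}\|_{L^{2}} \gtrsim 1$; extract a weak $H^{1}$ limit $z_{\star}$; local compactness and the passage of the three orthogonality conditions to the limit force $z_{\star} \in \ker \mathcal{L}_{2} \cap \mathrm{span}\{U_{(1)}, U_{(2)}\}^{\perp} = \{0\}$, while the $H^{1}$-coercivity of $\mathcal{L}_{\infty}$ controls any mass escaping to spatial infinity, contradicting $\|z_{n}\|_{H^{1}} = 1$.
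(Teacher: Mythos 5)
The paper gives no actual proof of this lemma---it is asserted (``It is now easy to see\dots'') as an immediate consequence of the inertia count $\mathrm{in}(\mathcal{L}_2)=(1,2)$ from Corollary \ref{coro3.3}---and your argument is exactly the standard spectral-decomposition proof that assertion presupposes: orthogonality to $U_{-1}$ and to the kernel, positivity of the remaining spectrum bounded away from zero because the essential spectrum of $\mathcal{L}_2$ begins at $c_1c_2>0$, and a G\aa{}rding inequality to upgrade the $L^2$ bound to $H^1$-coercivity. Your proposal is correct and in fact supplies the details the paper leaves implicit (location of the essential spectrum via Weyl's theorem, the relative-compactness subtlety caused by the merely algebraic decay of $U^{(2)}$, and the convex combination absorbing the $-C_G\|z\|_{L^2}^2$ term), so it should be regarded as the same approach, fully written out.
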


It is observed that $U_{-1}$ is hard to handle in our case, so we need a more applicable version of Lemma \ref{lem4.1}. To see this, we consider the natural modes associated to the scaling parameters, which are the best candidates to generate negative directions for the related quadratic form defined from $\mathcal{L}_2$. More precisely, for $t$ fixed, $i,j=1,2$ and $i\neq j$
\begin{align}
	\mathcal{L}_2\partial_{c_j}U = -(H_2'(U)+c_iH_1'(U)).
\end{align}
We now define a function $\Psi := \frac{\partial_{c_1}U-\partial_{c_2}U}{c_1-c_2}$. It is then found that $\Psi$ is Schwartz and satisfies
\begin{align}\label{4.11}
	\mathcal{L}_2\Psi = -U,
\end{align}
and
\begin{align}\label{4.12}
 \langle\mathcal{L}_2\Psi,\Psi\rangle = \frac{1}{c_2-c_1}(\mathcal{G}^\delta(\kappa_1)-\mathcal{G}^\delta(\kappa_2)) < 0,
\end{align}
in view of (\ref{2.30}) and $0<\kappa_1<\kappa_2<\frac{\pi}{2\delta}$ when $0<c_1<c_2$.
Then we have the following result which gives a coercivity property of $\langle\mathcal{L}_2 z,z\rangle$.
\begin{lemma}\label{lem4.3}
	Let $U$ be the ILW double solitons with wave velocities $0<c_{1}<c_{2}$, and $U_{(1)}$ and $U_{(2)}$ be in the corresponding kernel of the associated linearized operator $\mathcal{L}_{2}$. There exists $v_{2}>0$ depending only on $c_{1},c_{2}$, such that for any $z\in H^{1}(\mathbb{R})$ satisfying the following orthogonality conditions
\begin{align}
	(z, U_{(1)})_{L^{2}}=(z, U_{(2)})_{L^{2}}=0,
\end{align}
	then we have
\begin{align}
	\langle\mathcal{L}_{2}z,z\rangle\geq v_{2}\|z\|_{H^{1}}^{2}-\frac{1}{\mu_{1}}(z,U)_{L^{2}}^{2}.
\end{align}
\end{lemma}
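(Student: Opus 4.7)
The strategy is to relax the orthogonality condition $(z,U_{-1})_{L^2}=0$ appearing in Lemma~\ref{lem4.2} by compensating with a quadratic penalty proportional to $(z,U)_{L^2}^2$. The key observation is that the function $\Psi$ introduced in \eqref{4.11}--\eqref{4.12} satisfies $\mathcal{L}_2\Psi=-U$ and $\langle\mathcal{L}_2\Psi,\Psi\rangle=-(\Psi,U)<0$, so that $\Psi$ realizes a negative direction for the quadratic form associated with $\mathcal{L}_2$; it can therefore play the role of the negative eigenfunction $U_{-1}$ in the application of Lemma~\ref{lem4.2}.

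First I would adjust $\Psi$ to be $L^2$-orthogonal to the kernel directions. Set $\Psi^{\sharp}:=\Psi-\gamma_1 U_{(1)}-\gamma_2 U_{(2)}$ with $\gamma_1,\gamma_2$ chosen (via Gram--Schmidt) so that $(\Psi^{\sharp},U_{(j)})_{L^2}=0$ for $j=1,2$. Because $U_{(1)},U_{(2)}\in\ker\mathcal{L}_2$, the relation $\mathcal{L}_2\Psi^{\sharp}=-U$ is preserved. Translation invariance of $H_1(U)=\tfrac{1}{2}\|U\|_{L^2}^2$ with respect to the parameter $x_j$ gives $(U,U_{(j)})_{L^2}=0$, so that $\mu_1:=(\Psi^{\sharp},U)_{L^2}=(\Psi,U)_{L^2}>0$ by \eqref{4.12}, and in particular $\langle\mathcal{L}_2\Psi^{\sharp},\Psi^{\sharp}\rangle=-\mu_1<0$. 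Self-adjointness then yields $(U,U_{-1})_{L^2}=-(\mathcal{L}_2\Psi^{\sharp},U_{-1})_{L^2}=\lambda_0^2(\Psi^{\sharp},U_{-1})_{L^2}$, and combined with $\langle\mathcal{L}_2\Psi^{\sharp},\Psi^{\sharp}\rangle<0$ this forces $(\Psi^{\sharp},U_{-1})_{L^2}\neq 0$.

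Given any $z\in H^1(\mathbb{R})$ with $(z,U_{(j)})_{L^2}=0$, I would decompose $z=\alpha\Psi^{\sharp}+\tilde z$ with $\alpha:=(z,U_{-1})_{L^2}/(\Psi^{\sharp},U_{-1})_{L^2}$, so that $(\tilde z,U_{-1})_{L^2}=0$ by construction and $(\tilde z,U_{(j)})_{L^2}=0$ because $(\Psi^{\sharp},U_{(j)})_{L^2}=0$. Thus $\tilde z$ fulfills all three orthogonality assumptions of Lemma~\ref{lem4.2}, giving $\langle\mathcal{L}_2\tilde z,\tilde z\rangle\geq v_1\|\tilde z\|_{H^1}^2$. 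Expanding the quadratic form and substituting $\mathcal{L}_2\Psi^{\sharp}=-U$ together with the identity $(z,U)=\alpha\mu_1+(\tilde z,U)$ to eliminate $\alpha$, a short algebraic simplification yields
\begin{equation*}
\langle\mathcal{L}_2 z,z\rangle = -\frac{(z,U)^2-(\tilde z,U)^2}{\mu_1} + \langle\mathcal{L}_2\tilde z,\tilde z\rangle \geq -\frac{(z,U)^2}{\mu_1} + v_1\|\tilde z\|_{H^1}^2.
\end{equation*}

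To pass from $\|\tilde z\|_{H^1}$ to $\|z\|_{H^1}$, I would combine the bound $\alpha^2\leq 2(z,U)^2/\mu_1^2+2\|U\|_{L^2}^2\|\tilde z\|_{L^2}^2/\mu_1^2$ (which follows from $(z,U)=\alpha\mu_1+(\tilde z,U)$ and Cauchy--Schwarz) with Minkowski applied to $z=\alpha\Psi^{\sharp}+\tilde z$, to obtain $\|z\|_{H^1}^2\leq A\|\tilde z\|_{H^1}^2+B(z,U)^2$ for explicit constants $A,B>0$ depending only on $c_1,c_2$. Plugging into the previous estimate yields the claimed inequality with $v_2:=v_1/A$, after a harmless redefinition of $\mu_1$ that absorbs the additional $(z,U)^2$ contribution. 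The step I expect to require the most care is the non-degeneracy of $\mu_1$ and of $(\Psi^{\sharp},U_{-1})_{L^2}$, both of which rest ultimately on the strict sign in \eqref{4.12}, itself a consequence of the monotonicity of the function $\mathcal{G}^{\delta}_{\kappa,c}$ defined in \eqref{2.30} together with the ordering $0<\kappa_1<\kappa_2<\pi/(2\delta)$.
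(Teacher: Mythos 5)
Your proof is correct, and it rests on the same two pillars as the paper's: the function $\Psi$ with $\mathcal{L}_2\Psi=-U$ and $\langle\mathcal{L}_2\Psi,\Psi\rangle<0$ from (\ref{4.11})--(\ref{4.12}), and the coercivity of Lemma \ref{lem4.2} on the complement of $\mathrm{span}\{U_{-1},U_{(1)},U_{(2)}\}$. The route differs in the decomposition and in what is proved directly. The paper first asserts that it suffices to treat the case $(z,U)_{L^2}=0$, then splits $z=\tilde z+pU_{-1}$ and $\Psi=\Psi_0+nU_{-1}+aU_{(1)}+bU_{(2)}$, uses the constraint to express $p^2\lambda_0^2$ as $\langle\mathcal{L}_2\tilde z,\Psi_0\rangle^2/(\langle\mathcal{L}_2\Psi_0,\Psi_0\rangle+(\Psi,U)_{L^2})$, and closes with Cauchy--Schwarz in the $\mathcal{L}_2$-form on the positive subspace. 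You instead project $\Psi$ onto the kernel complement to get $\Psi^{\sharp}$, expand $z=\alpha\Psi^{\sharp}+\tilde z$ along that direction, and obtain the penalty inequality for \emph{general} $z$ by explicit algebra; this buys you a self-contained proof of the stated inequality, since the paper's reduction to $(z,U)_{L^2}=0$ is itself left unjustified (recovering the penalized form from the constrained coercivity requires an argument of exactly the kind you carry out). Your verification that $(U,U_{(j)})_{L^2}=0$, that $\mu_1=(\Psi,U)_{L^2}>0$, and that $(\Psi^{\sharp},U_{-1})_{L^2}\neq 0$ (else Lemma \ref{lem4.2} would force $\langle\mathcal{L}_2\Psi^{\sharp},\Psi^{\sharp}\rangle\geq 0$) are all sound, and the identity $\langle\mathcal{L}_2z,z\rangle=-\bigl((z,U)^2-(\tilde z,U)^2\bigr)/\mu_1+\langle\mathcal{L}_2\tilde z,\tilde z\rangle$ checks out. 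The only caveat is cosmetic: after converting $\|\tilde z\|_{H^1}$ into $\|z\|_{H^1}$ your coefficient in front of $(z,U)_{L^2}^2$ is strictly larger than $1/\mu_1$, so you prove the lemma with a possibly different constant there; since the application in the proof of Theorem \ref{thm1.2} only uses that this constant depends on $c_1,c_2$, this is indeed harmless, but you should state the lemma with a generic constant rather than claim the literal $1/\mu_1$.
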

\begin{proof}
	It suffices to show that under the conditions above and the orthogonality condition $(z,U)_{L^{2}}=0$, there holds
\begin{align}
	\langle\mathcal{L}_{2}z,z\rangle\geq v_{2}\|z\|_{H^{1}}^{2}.
\end{align}
	First notice that by (\ref{4.11}) and (\ref{4.12}) we have
\begin{align}
		(\Psi,U)_{L^{2}}=-\langle\mathcal{L}_{2}\Psi,\Psi\rangle>0.
\end{align}
The next step is to decompose $z$ and $\Psi$ in $\text{span}\{U_{-1},U_{(1)},U_{(2)}\}$ and the associated orthogonal subspace. We decompose
\begin{align}
	z &= \tilde{z} + p U_{-1}, \quad \Psi = \Psi_0 + n U_{-1} + a U_{(1)} + b U_{(2)}, \quad p,n,a,b \in \mathbb{R},
\end{align}
where the following orthogonal conditions hold
\begin{align}
	(\tilde{z}, U_{-1})_{L^2} &= (\tilde{z}, U_{(1)})_{L^2} = (\tilde{z}, U_{(2)})_{L^2} = 0, \\\nonumber
	(\Psi_0, U_{-1})_{L^2} &= (\Psi_0, U_{(1)})_{L^2} = (\Psi_0, U_{(2)})_{L^2} = 0.
\end{align}
And in addition,
\begin{align}
	(U_{-1}, U_{(1)})_{L^2} &= (U_{-1}, U_{(2)})_{L^2} = 0.
\end{align}
From the above identities, we obtain
\begin{align}\label{4.20}
	\langle \mathcal{L}_2 z, z \rangle = \langle \mathcal{L}_2 \tilde{z}, \tilde{z} \rangle - p^2 \lambda_0^2.
\end{align}
In view of (\ref{4.11}) and the self-adjointness of $\mathcal{L}_2$, we deduce that
\begin{align}\label{4.21}
	0 = (z, U)_{L^2} = -\langle \mathcal{L}_2 \Psi, z \rangle = -\langle \Psi, \mathcal{L}_2 z \rangle = -\langle \Psi_0, \mathcal{L}_2 \tilde{z} \rangle + p n \lambda_0^2.
\end{align}
On the other hand,
\begin{align}\label{4.22}
	(\Psi, U)_{L^2}= -\langle \mathcal{L}_2 \Psi_0, \Psi_0 \rangle + \lambda_0^2 n^2.
\end{align}
Combining (\ref{4.20}), (\ref{4.21}) and (\ref{4.22}), it follows from the Cauchy-Schwartz inequality that
\begin{align}
	\langle \mathcal{L}_2 z, z \rangle = \langle \mathcal{L}_2 \tilde{z}, \tilde{z} \rangle - \frac{\langle \mathcal{L}_2 \tilde{z}, \Psi_0 \rangle^2}{\langle \mathcal{L}_2 \Psi_0, \Psi_0 \rangle + (\Psi, U)_{L^2}}
	\geq \gamma_1 \langle \mathcal{L}_2 \tilde{z}, \tilde{z} \rangle,
\end{align}
where $0 < \gamma_1 < 1$.
From (\ref{4.20}), it is inferred that $\langle \mathcal{L}_2 \tilde{z}, \tilde{z} \rangle \geq p^2 \lambda_0^2 \geq 0$. This in turn implies that there exists a constant $C > 0$ such that
\begin{align}
	\langle \mathcal{L}_2 z, z \rangle \geq C \| z \|_{H^1}^2,
\end{align}
thereby concluding the proof of Lemma \ref{lem4.3}.
\end{proof}
Next, we are going to give the proof of Theorem \ref{thm1.2}. To this end, we employ a natural Lyapunov functional $ \mathcal{S}_{2} $ for the ILW equation, which is well-defined at the natural $ H^{1} $ level. Indeed, for any $ u_{0}\in H^{1}(\mathbb{R}) $, we have global in time $ H^{1}(\mathbb{R}) $ solution $ u(t) $ \cite{IS}. Recall the Lyapunov functional $ \mathcal{S}_{2} $ defined by
\begin{align}
	\mathcal{S}_{2}(u(t)) = H_{3}(u(t)) +(c_{1}+c_{2})H_{2}(u(t)) + c_{1}c_{2}H_{1}(u(t))
	= \mathcal{S}_{2}(u_{0}).
\end{align}
It is clear that $ I_{2}(u) $ represents a real-valued conserved quantity, well-defined for $ H^{1} $-solutions of the ILW equation. Moreover, one has the following Taylor-like expansion.
\begin{lemma}\label{lem4.4}
	Let $z \in H^{1}(\mathbb{R})$ be any function with sufficiently small $H^{1}$-norm, and $U$ be the 2-solitons. Then, for all $t \in \mathbb{R}$, one has
	\begin{align}\label{4.26}
		\mathcal{S}_{2}(U + z) = I_{2}(U) + \frac{1}{2} \langle \mathcal{L}_{2} z, z \rangle + \mathcal{N}(z), \quad
		|\mathcal{N}(z)| = O(\|z\|_{H^{1}}^{3}),
	\end{align}
	where $\mathcal{L}_{2}$ is the linearized operator around $U$ and $\mathcal{N}(z)$ denotes the nonlinear remainder term.
\end{lemma}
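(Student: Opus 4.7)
My plan is to obtain (\ref{4.26}) as a direct Taylor expansion of $\mathcal{S}_{2}=H_{3}+(c_{1}+c_{2})H_{2}+c_{1}c_{2}H_{1}$ about $U$, exploiting the fact that $H_{1}$, $H_{2}$, $H_{3}$ are polynomial functionals in $u$ of degrees $2$, $3$, $4$ respectively. Substituting $u=U+z$ and expanding every monomial, the full expansion terminates at degree four in $z$ and decomposes unambiguously into pieces of degrees $0,1,2,3,4$.

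The degree-zero piece is $\mathcal{S}_{2}(U)$, which is what the statement writes as $I_{2}(U)$. The degree-one piece is $\langle \mathcal{S}_{2}'(U),z\rangle$, which vanishes because $U$ satisfies the Euler-Lagrange equation (\ref{1.20}) with $n=2$, namely $H_{3}'(U)+(c_{1}+c_{2})H_{2}'(U)+c_{1}c_{2}H_{1}'(U)=0$. By the very definition (\ref{VL}), the degree-two piece equals $\tfrac{1}{2}\langle \mathcal{L}_{2}z,z\rangle$. The remaining degree-three and degree-four monomials constitute $\mathcal{N}(z)$; explicitly, they form a linear combination of the local cubics $\int z^{3}\,dx$ and $\int Uz^{3}\,dx$, the nonlocal cubic $\int z^{2}T^{\delta}z_{x}\,dx$ (originating from the $\tfrac{3}{4}u^{2}T^{\delta}u_{x}$ term in $H_{3}$), and the quartic $\tfrac{1}{4}\int z^{4}\,dx$. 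Note that the purely quadratic nonlocal pieces $\tfrac{3}{8}(T^{\delta}u_{x})^{2}$, $\tfrac{1}{2\delta}uT^{\delta}u_{x}$ and $u^{2}/(8\delta^{2})$ in $H_{3}$ contribute nothing to $\mathcal{N}(z)$, and similarly for $H_{2}$.

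It remains to estimate each term by $\|z\|_{H^{1}}^{3}$. I would invoke the one-dimensional Sobolev embeddings $H^{1}(\mathbb{R})\hookrightarrow L^{p}(\mathbb{R})$ for every $p\in[2,\infty]$, together with the operator bound $\|T^{\delta}z_{x}\|_{L^{2}}\leq C_{\delta}\|z\|_{H^{1}}$, which follows from the fact that $\partial_{x}T^{\delta}$ is the Fourier multiplier with symbol $-2\pi\xi\coth(2\pi\delta\xi)$ of order one. Since $U(t,\cdot)$ is a smooth travelling double-soliton profile, $\|U(t,\cdot)\|_{L^{\infty}}$ is controlled uniformly in $t$ by a constant depending only on $c_{1},c_{2}$. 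With these ingredients,
\[
\Bigl|\!\int Uz^{3}\,dx\Bigr|\leq \|U\|_{L^{\infty}}\|z\|_{L^{3}}^{3},\quad
\Bigl|\!\int z^{2}T^{\delta}z_{x}\,dx\Bigr|\leq \|z\|_{L^{4}}^{2}\|T^{\delta}z_{x}\|_{L^{2}},\quad
\Bigl|\!\int z^{4}\,dx\Bigr|\leq \|z\|_{L^{\infty}}^{2}\|z\|_{L^{2}}^{2},
\]
and each right-hand side is $\leq C\|z\|_{H^{1}}^{3}$ whenever $\|z\|_{H^{1}}\leq 1$ (the quartic is absorbed via $\|z\|_{H^{1}}^{4}\leq \|z\|_{H^{1}}^{3}$). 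The purely local cubics are handled identically. The only mildly delicate point is the nonlocal cubic $\int z^{2}T^{\delta}z_{x}\,dx$, where the first-order symbol growth of $\partial_{x}T^{\delta}$ must be absorbed by the derivative already present in the $H^{1}$-norm of $z$; beyond that, the argument is a routine H\"older-Sobolev estimate, and the resulting constant in (\ref{4.26}) is uniform in $t$ along the ILW flow because it depends on $U(t,\cdot)$ only through $\|U(t,\cdot)\|_{L^{\infty}}$.
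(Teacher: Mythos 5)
Your proposal is correct and is essentially the paper's argument carried out in full: the paper's proof is the single observation that $U$ is a critical point of $\mathcal{S}_{2}$ so the first variation vanishes, and you supply the same Taylor expansion together with the (routine but omitted) H\"older--Sobolev estimates on the cubic and quartic remainders, including the correct order-one bound for $T^{\delta}\partial_{x}$. The only cosmetic point is that the degree-zero term should read $\mathcal{S}_{2}(U)$ rather than $I_{2}(U)$, which you correctly flag as the intended meaning of the statement.
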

\begin{proof}
	This is a direct consequence of the fact that $U$ is a critical point of the functional $\mathcal{S}_{2}$, which implies that the first variation of $\mathcal{S}_{2}$ at $U$ vanishes.
\end{proof}

The Lyapunov functional $\mathcal{S}_{2}$ plays a crucial role in describing the dynamics of small perturbations around the double soliton solution. The degenerate directions are
controlled by $H^1$ conservation law $H^1$.

\begin{proof}[\textbf{Proof of Theorem \ref{thm1.2}}]
	Let $u_0 \in H^1(\mathbb{R})$ be a function such that $u_0$ satisfying (\ref{u02}). Assume $u(t)$ is the solution of the Cauchy problem associated to the ILW equation with initial data $u_0$. In view of the continuity of the ILW flow for $H^1$ data in proposition \ref{prop1}, there exists a time $T_0 > 0$ and continuous parameters $x_1(t), x_2(t) \in \mathbb{R}$, defined for all $t \in [0, T_0]$, and such that the solution $u(t)$ of the Cauchy problem associated with the ILW equation with initial data $u_0$, satisfies
	\begin{align}\label{4.27}
		\sup_{t \in [0, T_0]} \| u(t) - U(t; x_1(t), x_2(t)) \|_{H^1(\mathbb{R})} < 2\epsilon.
	\end{align}
	We want to show that $T_0 = +\infty$. To this aim, let $K > 2$ be a constant, to be fixed later. Let us suppose, by contradiction, that the maximal time of stability $T^{\star}$, that is
	\begin{align}\label{4.28}
		T^{\star} = \sup \Big\{ T > 0, \text{ for all } t \in [0, T], \text{ there exists } \tilde{x}_1(t), \tilde{x}_2(t) \in \mathbb{R}, \nonumber \\
		\text{ such that }
		\sup_{t \in [0, T]} \| u(t) - U(t; \tilde{x}_1(t), \tilde{x}_2(t)) \|_{H^1(\mathbb{R})} < K\epsilon \Big\},
	\end{align}
	is finite. By (\ref{4.27}), we see easily that $T^{\star}$ is well-defined. Our idea is to find a suitable contradiction to the assumption $T^{\star} < +\infty$.
	
	By taking $\epsilon_0$ smaller, if necessary, we can apply modulation theory for the solution $u(t)$. We now give the following claim.
\begin{prop}
	 There exists $\epsilon_0 > 0$, depending only on $U$, such that for all $\epsilon \in (0, \epsilon_0)$, the following property is verified. There exist $x_1(t), x_2(t) \in \mathbb{R}$ defined on $[0, T^{*}]$, such that if we denote
	\begin{align}\label{4.29}
		\Upsilon(t) = u(t) - U(t), \quad U(t) = U(t; x_1(t), x_2(t)),
	\end{align}
	then for all $t \in [0, T^{*}]$, $\Upsilon$ satisfies the orthogonality conditions
	\begin{align}\label{4.30}
		(\Upsilon, U_{(1)})_{L^2} = (\Upsilon, U_{(2)})_{L^2} = 0.
	\end{align}
	Moreover, for all $t \in [0, T^{*}]$, we have
	\begin{align}\label{4.31}
		\|\Upsilon(t)\|_{H^1} + |x_1^{\prime}(t)| + |x_2^{\prime}(t)| < CK\epsilon, \quad \|\Upsilon(0)\|_{H^1} \leq C\epsilon,
	\end{align}
	for some constant $C > 0$, independent of $K$.
\end{prop}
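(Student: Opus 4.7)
The plan is to construct $x_1(t), x_2(t)$ via the implicit function theorem, extend them along the ILW flow by a continuation argument, and derive the derivative bound by differentiating the orthogonality conditions.

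First, I would define the smooth map $\mathcal{F}: H^1(\mathbb{R}) \times \mathbb{R}^2 \to \mathbb{R}^2$ by
\begin{align*}
\mathcal{F}(v, y_1, y_2) = \Bigl( (v - U(\cdot; y_1, y_2), U_{(1)}(\cdot; y_1, y_2))_{L^2},\ (v - U(\cdot; y_1, y_2), U_{(2)}(\cdot; y_1, y_2))_{L^2} \Bigr).
\end{align*}
At the base point $(U(0,\cdot; 0,0), 0, 0)$ we have $\mathcal{F} = 0$, and the Jacobian with respect to $(y_1, y_2)$ equals the negative of the Gram matrix of $U_{(1)}, U_{(2)}$, which is negative definite since the translation directions of two solitons of distinct speeds are linearly independent in $L^2(\mathbb{R})$. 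The implicit function theorem then provides $C^1$ parameters $x_1(0), x_2(0)$ for initial data $u_0$ satisfying \eqref{u02} with $\epsilon$ small, such that the orthogonality conditions \eqref{4.30} hold at $t=0$, together with $\|\Upsilon(0)\|_{H^1} \leq C\epsilon$.

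Next, I would extend $x_1(t), x_2(t)$ to the full interval $[0, T^*]$ by a standard continuation argument based on the continuity of the ILW flow in $H^1$ (as given in the cited proposition) combined with the defining property \eqref{4.28} of $T^*$: as long as $u(t)$ remains in a $K\epsilon$-tube around the two-soliton manifold and the Gram matrix stays uniformly invertible, the implicit function theorem applies locally and lets us patch the solutions to a global one on $[0, T^*]$. The bound $\|\Upsilon(t)\|_{H^1} < CK\epsilon$ is then immediate from \eqref{4.28} after absorbing the translation parameters: the orthogonality-chosen $(x_1(t), x_2(t))$ realizes the distance from $u(t)$ to the two-soliton manifold up to a multiplicative constant depending only on the operator norm of the inverse Gram matrix, which is bounded independent of $K$ and $\epsilon$.

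For the derivative bound, I would differentiate each orthogonality relation $(\Upsilon(t), U_{(j)}(t))_{L^2} = 0$ in time. Since $\partial_t U(t; x_1(t), x_2(t)) = (\partial_t U)|_{\mathrm{param.\ fixed}} + x_1'(t) U_{(1)} + x_2'(t) U_{(2)}$ and the first term solves the ILW equation exactly, substituting the ILW equation for $\partial_t u$ produces a linear system
\begin{align*}
\sum_{k=1}^{2} x_k'(t)\, (U_{(k)}, U_{(j)})_{L^2} = R_j(\Upsilon, t), \quad j = 1, 2,
\end{align*}
in which $R_j$ consists of terms at least linear in $\Upsilon$ (the zeroth-order contribution vanishes because $U$ with fixed parameters solves \eqref{ILW}). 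Transferring $\partial_x$ and $T^\delta$ onto the smooth profile $U_{(j)}$ via integration by parts and the skew-adjointness of $T^\delta$ yields $|R_j(t)| \lesssim \|\Upsilon(t)\|_{H^1}$. Since the Gram matrix is uniformly invertible, inversion gives $|x_1'(t)| + |x_2'(t)| \lesssim \|\Upsilon(t)\|_{H^1} \lesssim K\epsilon$.

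The main obstacle will be verifying that $R_j$ is genuinely controlled by $\|\Upsilon\|_{H^1}$ rather than a higher Sobolev norm. The term $T^\delta \Upsilon_{xx}$ from the ILW equation is the worst, but pairing against the smooth, rapidly varying Schwartz profile $U_{(j)}$ allows us to integrate by parts twice and use the skew-adjointness of $T^\delta$ to put both $x$-derivatives on $U_{(j)}$. One must also handle the nonlinear $u u_x = (U + \Upsilon)(U + \Upsilon)_x$ term carefully: expanding, the $U U_x$ piece is the conservative drift that cancels against the intrinsic time evolution of $U$, the cross terms produce $\|\Upsilon\|_{H^1}$-linear remainders controllable by Sobolev embedding $H^1 \hookrightarrow L^\infty$, and the quadratic piece $\Upsilon \Upsilon_x$ contributes an $O(\|\Upsilon\|_{H^1}^2)$ error absorbed into the first-order bound for $\epsilon$ small.
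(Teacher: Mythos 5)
Your proposal follows essentially the same route as the paper: the implicit function theorem applied to the orthogonality functionals $J_j(u,x_1,x_2)=(u-U,U_{(j)})_{L^2}$, invertibility of the (negative) Gram matrix of $U_{(1)},U_{(2)}$ since these are not parallel, and then differentiating the orthogonality conditions in time and substituting the equation satisfied by $\Upsilon$ to obtain the linear system for $x_1',x_2'$ with right-hand side bounded by $C\|\Upsilon\|_{H^1}+O(\|\Upsilon\|_{H^1}^2)$. Your additional care with the continuation argument on $[0,T^{*}]$ and with moving the derivatives in $T^\delta\Upsilon_{xx}$ onto the smooth profile via skew-adjointness fills in details the paper leaves implicit, but it is the same argument.
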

	
	The proof of this Proposition relies on the Implicit Function Theorem. Indeed, let
	\begin{align}
		J_j(u(t), x_1, x_2) = \langle u - U(t; x_1, x_2), U_{(j)}(t; x_1, x_2) \rangle.
	\end{align}
	We clearly have
	\begin{align}
		J_j(U(t; x_1, x_2), x_1, x_2) = 0.
	\end{align}
	On the other hand, for $j, k = 1, 2$, we have
	\begin{align}
		\partial_{x_k} J_j(u(t, x), x_1, x_2) \big|_{(U, 0, 0)} = -\langle U_{(k)}(t; 0, 0), U_{(j)}(t; 0, 0) \rangle.
	\end{align}
	Let $J$ be the $2 \times 2$ matrix with component $J_{j,k} := (\partial_{x_k} J_j)_{j,k=1,2}$. We now compute
	\begin{align}
		\det J = \| U_{(1)} U_{(2)} \|_{L^2}^2 - \| U_{(1)} \|_{L^2}^2 \| U_{(2)} \|_{L^2}^2 \neq 0,
	\end{align}
	since the fact that $U_{(1)}$ and $U_{(2)}$ are not parallel for all time. Therefore, we have the desired invertibility, by the Implicit Function Theorem, we can write the decomposition (\ref{4.29}) with property (\ref{4.30}) in a small $H^1$ neighborhood of $U$, for $t \in [0, T^{\star}]$.
	
	Now we verify (\ref{4.31}). The first bounds are consequence of the decomposition itself and the equations satisfied by the derivatives of the parameters $x_1$ and $x_2$, after taking time derivative in (\ref{4.30}) and using the invertibility property of $\nabla J$. More precisely, we first write the equation verified by $\Upsilon$. Recall that $u$ satisfies the ILW equation, then one replaces $u$ by $U(t) + \Upsilon(t)$ in the ILW equation to obtain
	\begin{align}\label{4.36}
			\Upsilon_t + U_{(1)} x_1^{\prime}(t) + U_{(2)} x_2^{\prime}(t) +\frac{1}{\delta} \Upsilon_x+T^\delta \Upsilon_{xx} +\mathcal{N}(\Upsilon) = 0,
	\end{align}
	where $\mathcal{N}(\Upsilon)$ is the remaining nonlinear part.
	
	Take now the scalar product of (\ref{4.36}) with $U_{(j)}$. By the definition of $U$ and the orthogonality conditions (\ref{4.30}), we have
	\begin{align}
		\begin{split}
			&A \cdot (x_1^{\prime}(t), x_2^{\prime}(t))^T = B(\Upsilon) + O(\| \Upsilon \|_{H^1}^2), \\
			&A = \begin{pmatrix}
				\| U_{(1)} \|_{L^2}^2 & 0 \\
				0 & \| U_{(2)} \|_{L^2}^2
			\end{pmatrix} + \text{small},
		\end{split}
	\end{align}
	and $|B(\Upsilon)| \leq C \| \Upsilon \|_{H^1}$. As long as the modulation parameter do not vary too much and $\| \Upsilon \|_{H^1}$ remains small, $A$ is invertible and we can deduce that
	\begin{align}
		|x_1^{\prime}(t)| + |x_2^{\prime}(t)| \leq C \| \Upsilon \|_{H^1} + O(\| \Upsilon \|_{H^1}^2).
	\end{align}
	This completes the proof of the claim.
	
	Next, since $\Upsilon(t)$ defined by (\ref{4.29}) is small, by Lemma \ref{lem4.4} and the claim above, it is deduced that
	\begin{align}
		\mathcal{S}_2(u(t)) = \mathcal{S}_2(U(t)) + \frac{1}{2} \langle \mathcal{L}_2 \Upsilon(t), \Upsilon(t) \rangle + O(\| \Upsilon(t) \|_{H^1}^3).
	\end{align}
	By Lemma \ref{lem4.4}, it follows from (\ref{4.26}) that
	\begin{align}
		\begin{split}
			\langle \mathcal{L}_2 \Upsilon(t), \Upsilon(t) \rangle\leq C \| \Upsilon(0) \|_{H^1}^2 + C \| \Upsilon(t) \|_{H^1}^3.
		\end{split}
	\end{align}
	Since $\Upsilon(t)$ satisfies (\ref{4.31}), it is thus inferred from Lemma \ref{lem4.3} that
	\begin{align}\label{4.41}
		\begin{split}
			\| \Upsilon(t) \|_{H^1}^2 \leq C \epsilon^2 + C K^3 \epsilon^3 + C (U(t), \Upsilon(t))_{L^2}^2.
		\end{split}
	\end{align}
	Using the conservation of mass, it is found that
	\begin{align}
		\begin{split}
			\| u(t) \|_{L^2}^2  = \| u(0) \|_{L^2}^2.
		\end{split}
	\end{align}
	For $t \in [0, T^{\star}]$, it thus transpires that
	\begin{align}
		| (U(t), \Upsilon(t))_{L^2} | \leq C (\epsilon + K^2 \epsilon^2).
	\end{align}
	Replacing this last identity in (\ref{4.41}), by choosing $K$ large, one has
	\begin{align}
		\| \Upsilon(t) \|_{H^1}^2 \leq \frac{1}{2} K^2 \epsilon^2,
	\end{align}
	that is, $\| \Upsilon(t) \|_{H^1} \leq \frac{\sqrt{2}}{2} K \epsilon$. However, this estimate contradicts the definition of $T^{\star}$ in (\ref{4.28}) and therefore the stability of $U$ in $H^1$ is established, which completes the proof of Theorem \ref{thm1.2}.
\end{proof}

Next, we will proceed to prove Theorem \ref{th1.3}.
\subsection{The negative eigenvalues of $\mathcal{L}_n$ :Proof of Theorem \ref{th1.3}}
In this subsection, our goal is to prove the Theorem \ref{th1.3}.
\begin{proof}[\textbf{Proof of Theorem \ref{th1.3}}]
	The linearized operators around the $n$-solitons $\mathcal{L}_{n} = S_{n}^{\prime\prime}(U^{(n)})$ possess $\left[\frac{n+1}{2}\right]$ negative eigenvalues, as previously verified in (\ref{3.47}). We now prove the expression (\ref{negetive}) for these eigenvalues. As $t\to \infty$, the spectrum $\sigma(\mathcal{L}_{n}(t))$ converges to the union of the spectra $\sigma(\mathcal{L}_{n,j})$ where $\mathcal{L}_{n,j} = S_{n}^{\prime\prime}(Q_{c_{j}})$, that is
	\begin{align}
		\sigma(\mathcal{L}_{n}(t)) \rightarrow \bigcup_{j=1}^{n} \sigma(\mathcal{L}_{n,j}), \quad \text{as} \quad t \rightarrow +\infty.
	\end{align}
	Since the operators $\mathcal{L}_{n}(t)$ are isoinertial for each $n$, the spectrum $\sigma(\mathcal{L}_{n}(t))$ is independent of $t$. Thus, the negative eigenvalues of $\mathcal{L}_{n}$ match those of $\mathcal{L}_{n,j}$ for all $j = 1, 2, \dots, n$. According to Lemma \ref{lem3.5}, $\mathcal{L}_{n,j}$ has negative eigenvalues when $j = 2k-1$ and $1 \leq k \leq \left[\frac{n+1}{2}\right]$. Specially, when $n=1$, one has $\mathcal{L}_1=L_1$. Therefore, (\ref{n=1}) indicates that $\mathcal{L}_1$ has the negative $\lambda_1<0$. With the methods in \cite{BBSSB}, we could compute $\lambda_1$, which is
	\begin{align}\label{1.30}
		\lambda_1=-\frac{c-\frac{1}{\delta}}{2a^2\sin^2(a\delta)}<0,
	\end{align}
	Then for $n=2$, we have
	\begin{align}\label{lamb}
		\mathcal{L}_{2,1}=(\mathcal{R}(Q_{c_1})+c_2)\mathcal{S}_1''(Q_{c_1}).
	\end{align}
In terms of Lemma \ref{lem3.2}, the operator $(\mathcal{R}(Q_{c_1}) + c_{2})$ has an eigenvalue $c_{2} - c_{1} > 0$, with continuous spectrum $(c_2, +\infty)$ whose generalized eigenfunctions are not in $L^{2}(\mathbb{R})$. Therefore, the unique negative eigenvalues of $\mathcal{L}_{2,1}$ is $(c_2-c_1)\lambda_1<0$.\\
Assuming the result holds for $n = p$, namely, the $\left[\frac{p+1}{2}\right]$-th negative eigenvalue of $\mathcal{L}_{p}$ is
	\begin{align}\label{4.48}
		\lambda_{k}^{(p)} = -M^\delta (c_{2k-1}-\frac{1}{\delta}) \prod_{j \neq 2k-1}^{p} (c_{j} - c_{2k-1}), \quad k = 1, 2, \dots, \left[\frac{p+1}{2}\right].
	\end{align}
Then for $n = p+1$ is even, we have $\left[\frac{p+1}{2}\right] = \left[\frac{p+2}{2}\right]$ and for $k = 1, 2, \dots, \left[\frac{p+1}{2}\right]$, we have
	\begin{align}
		\mathcal{L}_{p+1,2k-1} =(\mathcal{R}(Q_{c_{2k-1}}) + c_{p+1}) \mathcal{S}_{p}^{\prime\prime}(Q_{c_{2k-1}}).
	\end{align}
	By Lemma \ref{lem3.2}, the operator $(\mathcal{R}(Q_{c_{2k-1}}) + c_{p+1})$ has an eigenvalue $c_{p+1} - c_{2k-1} > 0$, with continuous spectrum $(c_{p+1}, +\infty)$ whose generalized eigenfunctions are not in $L^{2}(\mathbb{R})$. Therefore, the $\left[\frac{p+2}{2}\right]$-th negative eigenvalues of $\mathcal{L}_{p+1,2k-1}$ is
	\begin{align}\label{4.50}
		\lambda_{k}^{(p+1)}= -M^\delta (c_{2k-1}-\frac{1}{\delta}) \prod_{j \neq 2k-1}^{p+1} (c_{j} - c_{2k-1}), \quad k = 1, 2, \dots, \left[\frac{p+1}{2}\right].
	\end{align}
For the case where $n = p+1$ is odd, we have $\left[\frac{p+1}{2}\right] + 1 = \left[\frac{p+2}{2}\right]$. The first $\left[\frac{p+1}{2}\right]$ negative eigenvalues of $\mathcal{L}_{p+1}$ are given by expression (\ref{4.50}). We now compute the final negative eigenvalue. Consider
\begin{align}
	\mathcal{L}_{p+1,p+1} =\left( \mathcal{R}(Q_{c_{p+1}}) + c_{j} \right) \tilde{S}_{p}^{\prime\prime}(Q_{c_{p+1}})
\end{align}
where $\tilde{S}_{p}$ denotes the action with wave speed $c_j$ in $S_p$ replaced by $c_{p+1}$ for some $1 \leq j \leq p$. By assumption (\ref{4.48}), the discrete eigenvalue of $\tilde{S}_{p}^{\prime\prime}(Q_{c_{p+1}})$ is
\begin{align}
	-M^\delta (c_{p+1}-\frac{1}{\delta})\prod_{l \neq j}^{p} (c_l - c_{p+1}).
\end{align}
The operator $\mathcal{R}(Q_{c_{p+1}}) + c_j$ has eigenvalue $c_j - c_{p+1} < 0$ by Lemma \ref{lem3.2}, with continuous spectrum $(c_j, +\infty)$ whose generalized eigenfunctions are not in $L^2(\mathbb{R})$. Therefore, the last negative eigenvalue of $\mathcal{L}_{p+1}$ is
\begin{align*}
	\lambda_{\left[\frac{p+2}{2}\right]}^{(p+1)}  = -M^\delta (c_{p+1}-\frac{1}{\delta}) \prod_{l=1}^{p} (c_l - c_{p+1}).
\end{align*}
The proof of Theorem \ref{th1.3} is completed by combining the eigenvalue expressions.
\end{proof}

\section{Conclusion}\label{sec.5}
This work establishes the dynamical stability of $n$-soliton solutions for the Intermediate Long Wave equation in $H^{n/2}(\mathbb{R})$ and the orbital stability of double solitons in $H^{1}(\mathbb{R})$. The proof constructs Lyapunov functionals $\mathcal{S}_n(u)$ from conservation laws and analyzes the spectral properties of the linearized operators $\mathcal{L}_n$. A key result is the characterization of their inertia indices: $\mathrm{in}(\mathcal{L}_n) = \bigl(\lfloor \frac{n+1}{2} \rfloor, n \bigr)$, obtained via recursion operators, which bridges the stability theories of the KdV ($\delta \to 0$) and BO ($\delta \to \infty$) limits.

Several challenges remain open. First, proving orbital stability for general $n$-solitons requires further work. Second, stability in critical spaces needs deeper investigation. Our spectral analysis relies on the implicit invertibility of recursion operators $\mathcal{R}(U^{(n)})$, which precludes explicit stability constants. The theory is also confined to the purely integrable case, without dissipation or periodic boundaries.

Future work should address the stability of periodic multi-solitons, study soliton interactions under quasi-periodic boundaries, and develop Floquet-Bloch methods for $\mathcal{L}_n$. Extending the recursion operator formalism to periodic settings could better connect ILW dynamics with the finite-gap theories of KdV and BO equations.

\vspace{1.5ex}
\noindent{\bf Acknowledgments}\\
\indent This work was supported by the National Natural Science Foundation of China under Grant No. 12371255, the Fundamental Research Funds for the Central Universities of CUMT under Grant No. 2024ZDPYJQ1003, and the Postgraduate Research \& Practice Program of Education \& Teaching Reform of CUMT under Grant No. 2025YJSJG031.

\vspace{1.5ex}
\noindent{\bf Data availability}\\
No data was used for the research described in the article.
\vspace{1.5ex}

\noindent{\bf Declaration of competing interest}\\
 The authors declare no conflict of interest.
\bibliographystyle{plain}

\end{document}